\CompileMatrices\SelectTips{cm}{12}
\theoremstyle{plain}
\newtheorem{Thm}{\sc Theorem}[section]
\newtheorem{Theorem}[Thm]{\sc Theorem}
\newtheorem{Corollary}[Thm]{\sc Corollary}
\newtheorem{Claim}{\sc Claim}[Thm]
\newtheorem*{Corollary*}{\sc Corollary}
\newtheorem{Proposition}[Thm]{\sc Proposition}
\newtheorem*{Proposition*}{\sc Proposition}
\newtheorem{Lemma}[Thm]{\sc Lemma}
\theoremstyle{definition}
\newtheorem{Definition}[Thm]{Definition}
\theoremstyle{remark}
\newtheorem{Remark}[Thm]{Remark}
\newtheorem{Example}[Thm]{Example}
\newtheorem*{Example*}{Example}
\newtheorem*{Remark*}{Remark}
\renewcommand{\AA}{{\mathbb A}}
\newcommand{\CC}{{\mathbb C}}
\newcommand{\FF}{{\mathbb F}}
\newcommand{\HH}{{\mathbb H}}
\newcommand{\LL}{{\mathbb L}}
\newcommand{\ZZ}{{\mathbb Z}}
\newcommand{\PP}{{\mathbb P}}
\newcommand{\QQ}{{\mathbb Q}}
\newcommand{\RR}{{\mathbb R}}
\newcommand{\VV}{{\mathbb V}}
\newcommand{\cO}{{\mathcal O}}
\newcommand{\HIG}[1]{\mathop{\operatorname{MinHIG}^{#1}}\, }
\newcommand{\Hig}{\mathop{\operatorname{HIG}\, }}
\newcommand{\Mic}{\mathop{\operatorname{MIC}\, }}
\newcommand{\Mod}[1]{\mathop{\operatorname{{#1}-Mod}\, }}
\newcommand{\nilMod}[1]{\mathop{\operatorname{{#1}-Mod}_{\mathop{\rm nil}}\, }}
\newcommand{\ssMod}[2]{\mathop{\operatorname{Min-{#1}-Mod}^{#2}\, }}
\newcommand{\nMod}[2]{\mathop{\operatorname{Min-{#1}-Mod}^{#2}_{\mathop{\rm nil}}\, }}
\newcommand{\PGL}{\mathop{\rm PGL\, }}
\newcommand{\SL}{\mathop{\rm SL\, }}
\newcommand{\et}{{\mathop{\rm et \, }}}
\newcommand{\ch}{{\mathop{\rm ch \, }}}
\newcommand{\Gr}{{\mathop{Gr}}}
\newcommand{\id}{\mathop{\rm Id}}
\newcommand{\im}{\mathop{\rm im \, }}
\newcommand{\Ext}{{\mathop{{\rm Ext \,}}}}
\newcommand{\End}{{\mathop{{\mathcal E}nd}}}
\newcommand{\Tor}{{\mathop{{\mathcal T}or}}}
\newcommand{\Res}{{\mathop{\rm Res}}}
\newcommand{\reduced}{{\mathop{\rm red}}}
\newcommand{\rk}{{\mathop{\rm rk \,}}}
\newcommand{\Sym}{{\mathop{{\rm Sym}}}}
\newcommand{\Supp}{{\mathop{{\rm Supp \,}}}}
\newcommand{\Spec}{{\mathop{{\rm Spec\, }}}}
\begin{document}

\markboth {\rm }{}

\title{Nearby cycles and semipositivity in positive characteristic}
\author{Adrian Langer} \date{\today}

\maketitle


{\noindent \sc Address:}\\
Institute of Mathematics, University of Warsaw,
ul.\ Banacha 2, 02-097 Warszawa, Poland\\
e-mail: {\tt alan@mimuw.edu.pl}

\medskip

\begin{abstract}
  We study restriction of logarithmic Higgs bundles to the boundary
  divisor and we construct the corresponding nearby-cycles functor in
  positive characteristic.  As applications we prove some strong
  semipositivity theorems for analogs of complex polarized variations
  of Hodge structures and their generalizations. This implies, e.g., semipositivity for the
  relative canonical divisor of a semistable reduction in positive
  characteristic and it gives some new strong results generalizing
  semipositivity even for complex varieties.
\end{abstract}

\section*{Introduction}

Let $X$ be a smooth projective variety defined over an algebraically
closed field $k$ of characteristic $p$ and let $D$ be a simple normal
crossing divisor on $X$. In this introduction we assume that $(X, D)$
lifts to the ring $W_2(k)$ of Witt vectors of length at most $2$.

A logarithmic Higgs sheaf on $(X,D)$ is a pair $(E,\theta)$ consisting
of a coherent $\cO_X$-module and an $\cO_X$-linear map $\theta: E\to
E\otimes \Omega_X(\log \, D)$ such that $\theta\wedge
\theta=0$. Equivalently, replacing $\theta$ by $\hat \theta: T_X(\log
\, D) \otimes E\to E$ one can consider a logarithmic Higgs sheaf on
$(X,D)$ as a $\Sym ^{\bullet} T_X(\log \, D)$-module, which is
coherent when considered as an $\cO_X$-module.

Let $\HIG{0} (X,D)$ be the category of locally free logarithmic Higgs
sheaves of rank $r\le p$ on $(X,D)$, which have vanishing Chern
classes in $H^{2*}_{\et}(X, \QQ_l)$ for some $l\ne p$ and are
semistable. In this case semistable means slope $H$-semistable with
respect to some ample divisor $H$, but one can show that the category $\HIG{0}
(X,D)$ does not depend on the choice of $H$. One can also replace
slope semistability by Gieseker semistability and the category remains
the same.

Let $Y$ be an irreducible component of $D$ and let $\imath : Y\to X$ be
the corresponding embedding.
One of the main aims of this paper is to prove the following theorem:

\begin{Theorem}\label{main}
Let $(E,\theta)$ be an object of $\HIG{0} (X,D)$.  The restriction of
$(E, \hat\theta)$ to $Y$ defines a semistable $\Sym ^{\bullet}
\imath^* T_X(\log \, D)$-module.  Moreover, this restriction can be
deformed to an element of $\HIG{0} (Y,D^Y)$, where $D^Y$ is the
restriction of the divisor $D-Y$ to $Y$.
\end{Theorem}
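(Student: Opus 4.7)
\emph{Structural decomposition.} The residue sequence
\[
0 \to T_Y(\log\, D^Y) \to \imath^* T_X(\log\, D) \to \cO_Y \to 0
\]
identifies a $\Sym^\bullet \imath^* T_X(\log\, D)$-module structure on $\imath^* E$ with the data of a logarithmic Higgs field $\theta_Y : \imath^*E \to \imath^*E\otimes \Omega_Y(\log\, D^Y)$ together with a residue endomorphism $\Phi\in \End_{\cO_Y}(\imath^*E)$ commuting with $\theta_Y$. Both the Higgs integrability $\theta_Y\wedge \theta_Y = 0$ and the commutation $[\theta_Y,\Phi]=0$ are immediate consequences of $\theta\wedge\theta = 0$ on $X$, so the theorem decomposes into a semistability statement for $(\imath^*E,\theta_Y,\Phi)$ and a statement about deforming $\Phi$ away.

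\emph{Semistability of the restriction.} To prove the first assertion I would invoke a restriction theorem for semistable logarithmic $\Sym^\bullet T_X(\log\, D)$-modules with vanishing Chern classes in characteristic $p$, applied to the fixed boundary component $Y$ rather than a generic ample divisor. Under the hypotheses defining $\HIG{0}(X,D)$---rank at most $p$, vanishing \'etale Chern classes, and the $W_2(k)$-lifting---the expected strategy is to transport the problem via an inverse Cartier transform or a Frobenius pull-back to a setting where a semistability-preserving restriction is available, and then to descend. Vanishing of the Chern classes of the restriction is automatic from pullback, so the real content is preservation of semistability when one specializes the polarization to a fixed boundary divisor.

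\emph{Deformation to $\HIG{0}(Y,D^Y)$.} I would construct the deformation by scaling the residue: consider the family over $\AA^1_k$ whose fibre at $t$ is the $\Sym^\bullet \imath^* T_X(\log\, D)$-module $(\imath^*E, \theta_Y, t\Phi)$. The commutation $[\theta_Y,\Phi]=0$ makes this well-defined, the underlying $\cO_Y$-module is constant (so rank and Chern classes do not vary), and at $t=0$ the structure factors through $\Sym^\bullet T_Y(\log\, D^Y)$, producing a logarithmic Higgs bundle on $(Y,D^Y)$. For membership in $\HIG{0}(Y,D^Y)$ only semistability of $(\imath^*E,\theta_Y)$ as a logarithmic Higgs bundle is non-trivial. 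Here one argues that the maximal destabilizing Higgs subsheaf $F_{\max}$ (with respect to $\theta_Y$ alone) is automatically $\Phi$-invariant: indeed $\Phi(F_{\max})$ is $\theta_Y$-invariant since $[\theta_Y,\Phi]=0$, and as a quotient of the semistable sheaf $F_{\max}$ it has slope at least $\mu(F_{\max})$, which by maximality forces $\Phi(F_{\max})\subset F_{\max}$. Any such destabilizing $F_{\max}$ would then destabilize the $\Sym^\bullet \imath^* T_X(\log\, D)$-module already shown semistable in the previous step, a contradiction.

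\emph{Main obstacle.} The decisive difficulty is the restriction theorem of the second step. It is not a direct consequence of Mehta--Ramanathan, since $Y$ is a fixed boundary divisor and the residue couples semistability to an extra endomorphism $\Phi$; the proof must genuinely exploit the positive-characteristic hypotheses---the $W_2(k)$-lifting together with the rank bound $r\le p$---to invoke Frobenius-related or inverse Cartier machinery, in line with the nearby-cycles construction announced in the abstract. Once that step is in place, the $\AA^1$-deformation and the Harder--Narasimhan argument for the $t=0$ fibre are essentially formal.
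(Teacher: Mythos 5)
Your opening ``structural decomposition'' is where the argument breaks down. The residue sequence on $Y$ reads $0\to\cO_Y\to\imath^*T_X(\log\, D)\to T_Y(\log\, D^Y)\to 0$ (you have written sub and quotient in the wrong order), and it does \emph{not} split in general: its extension class is governed by the Atiyah class of the normal bundle $\cO_Y(Y)|_Y$. Consequently a $\Sym^{\bullet}\imath^*T_X(\log\, D)$-module is \emph{not} equivalent to the data of a logarithmic Higgs field $\theta_Y$ on $(Y,D^Y)$ plus a commuting residue endomorphism $\Phi$: the action of $\imath^*T_X(\log\, D)$ only induces a Higgs field on $(Y,D^Y)$ once the residue action has been killed (this is Lemma \ref{functor-Upsilon}, which applies only when $\Res_E=0$, or to the graded pieces of the monodromy filtration when $\Res_E$ is nilpotent). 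This non-splitness is precisely why the paper first deforms $(E,\theta)$ to a system of Hodge sheaves via Theorem \ref{deformation-to-system} (so the residue of the restriction becomes nilpotent), then passes to the associated graded of the monodromy filtration of the residue, and proves via Lemma \ref{semistable-quotients} and Theorem \ref{strong-log-freeness} that the quotients lie in $\HIG{0}(Y,D^Y)$ (Theorem \ref{passing-from-L-to-Hig}, Corollary \ref{ss-filtration-of-restriction}). Your $\AA^1$-family $(\imath^*E,\theta_Y,t\Phi)$ is not even defined without the splitting, so the deformation step collapses as set up; note also that it would prove more than the paper claims (that the restriction with residue removed is itself semistable on $(Y,D^Y)$), whereas the correct statement is only that the restriction admits a filtration whose graded pieces lie in $\HIG{0}(Y,D^Y)$.

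The second, equally serious, gap is that the semistability of the restriction---which you yourself flag as the decisive difficulty---is merely announced (``invoke a restriction theorem\dots transport via an inverse Cartier transform\dots and descend''). That is the actual content of Theorem \ref{nearby-functor}, and its proof is not formal: it runs the Higgs--de Rham flow of Theorem \ref{Higgs-de-Rham} and needs (i) Theorem \ref{log-freeness} to guarantee local freeness of all terms, so that Simpson filtrations restrict to $Y$; (ii) Lemma \ref{compatibility}, the compatibility of the inverse Cartier transform with restriction to $Y$, which---again because of the non-split residue sequence---must be formulated after pulling back to the total space $Z$ of the normal bundle via Lemma \ref{Wahl}; and (iii) the boundedness argument of Theorem \ref{boundedness} to control the slopes $p^m(\mu_{H_Y}(E')-\mu_{H_Y}(E_Y))$ of putative destabilizing submodules along the flow. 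None of this machinery is supplied or replaced in your proposal, so the central assertion of the theorem remains unproved.
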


The precise statement of this theorem is contained in Theorem
\ref{nearby-functor} and Corollary
\ref{ss-filtration-of-restriction}. In fact, we prove a more general
version that works also for Higgs sheaves (or modules with an integrable
connection) with non-vanishing Chern classes.

Together with the restriction theorem for curves not contained in the
boundary divisor $D$ (see Theorem
\ref{curve-restriction}) this gives an inductive procedure for
studying restriction of elements of $\HIG{0} (X,D)$ to curves. In
particular, it implies the following theorem (see Definition
\ref{strongly-liftable} for the definition of a strongly liftable
morphism).

\begin{Theorem} \label{semipositivity} 
  Let $(E, \theta)$ be an object of $\HIG{0} (X,D)$.  Let $C$ be a
  smooth projective curve and let $\nu : C\to (X,D)$ be a separable
  morphism that is strongly liftable to $W_2(k)$. Then the induced
  $\Sym ^{\bullet} \nu^* T_X(\log \, D)$-module $\nu ^*E$ is
  semistable.  In particular, if $G$ is a subsheaf of the kernel of
  $\nu^*\theta : \nu^*E\to \nu^* E\otimes \nu^*\Omega_X(\log\, D)$
  then $\deg G\le 0$.
\end{Theorem}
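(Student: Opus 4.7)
The plan is to prove Theorem \ref{semipositivity} by induction on $\dim X$, combining Theorem \ref{main} (for curves contained in the boundary $D$) with Theorem \ref{curve-restriction} (for curves whose image escapes $D$).

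When $\dim X = 1$ the boundary $D$ is finite, so every separable morphism from an irreducible smooth curve has image outside $D$, and Theorem \ref{curve-restriction} applies at once. For the inductive step in dimension $n$, I distinguish two cases. If $\nu(C) \not\subset D$, I again invoke Theorem \ref{curve-restriction}. If $\nu(C) \subset D$, I select an irreducible component $Y$ of $D$ containing $\nu(C)$ and factor $\nu = \imath \circ \mu$ through the embedding $\imath : Y \hookrightarrow X$. By Theorem \ref{main}, the restriction $\imath^*(E,\hat\theta)$ is a semistable $\Sym^{\bullet} \imath^*T_X(\log\, D)$-module and, crucially, deforms to some $(\tilde E,\tilde\theta) \in \HIG{0}(Y, D^Y)$. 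Since the embedding $Y \hookrightarrow X$ lifts as part of the given lifting of $(X,D)$, strong liftability of $\nu$ to $W_2(k)$ descends to strong liftability of $\mu : C \to (Y, D^Y)$. The inductive hypothesis applied to $(\tilde E, \tilde\theta)$ and $\mu$ then yields semistability of $\mu^* \tilde E$ as a $\Sym^{\bullet} \mu^* T_Y(\log\, D^Y)$-module.

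Because the deformation from Theorem \ref{main} leaves the Higgs action along $T_Y(\log\, D^Y)$ unchanged and only modifies the normal residues, $\nu^* E$ carries the same $\Sym^{\bullet} \mu^* T_Y(\log\, D^Y)$-module structure as $\mu^* \tilde E$ and is therefore semistable with respect to this smaller algebra. Every $\Sym^{\bullet} \nu^* T_X(\log\, D)$-submodule of $\nu^* E$ is a fortiori a $\Sym^{\bullet} \mu^* T_Y(\log\, D^Y)$-submodule, so the semistability upgrades to the larger algebra, establishing the first assertion. The ``in particular'' statement is then immediate: any $G \subset \ker(\nu^*\theta)$ is preserved by the pulled-back Higgs field and hence defines a $\Sym^{\bullet} \nu^* T_X(\log\, D)$-submodule of $\nu^* E$; semistability together with $c_1(E) = 0$ (which forces $\deg \nu^* E = 0$) yields $\deg G \le 0$.

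The main technical obstacle will be verifying that the deformation in Theorem \ref{main} respects the pullback to $C$, i.e.\ that $(\tilde E, \tilde\theta)$ and $\imath^*(E,\hat\theta)$ agree as $\Sym^{\bullet} T_Y(\log\, D^Y)$-modules on $Y$, so that the inductive conclusion for $\mu^*\tilde E$ transfers back to $\nu^* E$. This compatibility should be immediate from the nearby-cycles construction underlying Theorem \ref{main}, which is expected to alter only the residue data in the normal direction, but it is precisely the step that fuses the two restriction theorems into a functioning induction.
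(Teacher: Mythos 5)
Your overall strategy --- induction on $\dim X$, splitting into the cases $\nu(C)\not\subset D$ (handled by Theorem \ref{curve-restriction}) and $\nu(C)\subset Y\subset D$ (handled by the nearby-cycles restriction) --- is exactly the paper's, and the base case and the first case are fine. The problem is the transfer-back step, which you yourself flag as the ``main technical obstacle'' and then dismiss as immediate; it is not, and as written it is wrong. The sheaf $\nu^*E=\mu^*(E_Y)$ does \emph{not} carry a $\Sym^{\bullet}\mu^*T_Y(\log D^Y)$-module structure at all: in the exact sequence $0\to\cO_Y\to\imath^*T_X(\log D)\to T_Y(\log D^Y)\to 0$ the algebra $\Sym^{\bullet}T_Y(\log D^Y)$ is a \emph{quotient} of $\Sym^{\bullet}\imath^*T_X(\log D)$, not a subalgebra, so the action only descends to $T_Y(\log D^Y)$ on modules whose residue (the action of the sub $\cO_Y$) vanishes. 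Moreover $\tilde E$ is not $E_Y$ with ``only the normal residues modified'': by Corollary \ref{ss-filtration-of-restriction} it is the associated graded of a two-step filtration of $E_Y$ (a Griffiths-transverse filtration making the Higgs field nilpotent, followed by the monodromy filtration of the nilpotent residue), hence a genuinely different sheaf. So the sentence ``$\nu^*E$ carries the same $\Sym^{\bullet}\mu^*T_Y(\log D^Y)$-module structure as $\mu^*\tilde E$'' does not typecheck, and the subsequent ``a fortiori'' comparison of submodules runs in the wrong direction for the same reason.

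The correct way to fuse the two inputs (this is what the paper does in Theorem \ref{semipositivity-mu}) is: pull back the filtration $M_{\bullet}$ of $E_Y$ to $C$; since the graded pieces are locally free, $\nu^*M_{\bullet}$ is a filtration of $\nu^*E$ by $\Sym^{\bullet}\nu^*T_X(\log D)$-submodules whose associated graded is $\mu^*\tilde E$. Each graded piece has zero residue, so for it (and only for it) $\Sym^{\bullet}\nu^*T_X(\log D)$-submodules coincide with $\Sym^{\bullet}\mu^*T_Y(\log D^Y)$-submodules, and the inductive hypothesis applies, each piece having slope $\mu_{Y}\cdot$ (degree of $\nu$ on $c_1$), here $0$. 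A module filtered by submodules with semistable quotients of equal slope is semistable (the paper phrases this as openness of semistability under degeneration to the associated graded). With this replacement your induction closes; without it there is no bridge from $\mu^*\tilde E$ back to $\nu^*E$. The final ``in particular'' deduction from semistability is correct as you state it.
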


This theorem has an obvious analogue in characteristic zero (see
Theorem \ref{semipositivity-0}). But even the last part of this
theorem was not known in characteristic zero. Already this part
implies essentially all known semipositivity results (see below) for
Higgs bundles or complex polarized variations of Hodge structures due
to Fujita \cite{Fu}, Kawamata \cite{Ka}, Zuo \cite{Zu},
Fujino-Fujisawa \cite[Theorem 5.21]{FF}, Brunebarbe \cite[Theorems 1.8
and 4.5]{Br1}, \cite[Theorem 1.2]{Br2} and many others.  Note that
almost all the proofs of such results are analytic and use Hodge
theory. A notable exception is Arapura's proof of \cite[Theorem 2]{Ar}
that uses reduction to positive characteristic. However, his proof
uses vanishing theorems and it does not give any semipositivity results
in positive characteristic.

We say that a sheaf $E$ on $(X,D)$ is \emph{$W_2$-nef} if for any
smooth projective curve $C$ and any morphism $\nu : C\to (X,D)$ that
is strongly liftable to $W_2(k)$ (see Definition
\ref{strongly-liftable}), we know that all quotients of $\nu^*E$ have
a non-negative degree.

The following corollary is a direct analogue of \cite[Theorem
1.2]{Br2} in positive characteristic. In fact, it implies its
generalization from polystable to the semistable case.

\begin{Corollary} \label{semipositivity-cor} 
 Let $(E, \theta)$ be an object of $\HIG{0} (X,D)$. 
If $E'$ is a locally split subsheaf of $E$ contained in the kernel of $\theta$
then its dual $(E')^*$ is $W_2$-nef on $(X,D)$.
\end{Corollary}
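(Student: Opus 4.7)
The plan is to derive this formally from the last assertion of Theorem~\ref{semipositivity}. Fix a smooth projective curve $C$ and a separable morphism $\nu:C\to (X,D)$ that is strongly liftable to $W_2(k)$. To show that $(E')^*$ is $W_2$-nef, I must verify that every quotient of $\nu^*(E')^*$ has non-negative degree. The key observation is that local splittness of $E'\subset E$ makes $\nu^*E'$ into a subbundle of $\nu^*E$, and in particular $\nu^*(E')^*\cong (\nu^*E')^*$ is again locally free on $C$.

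The next step is a standard duality reduction on the curve $C$. Given a coherent quotient $F$ of $(\nu^*E')^*$, the torsion part $F_{\mathrm{tor}}$ has $\deg F_{\mathrm{tor}}\ge 0$, so after replacing $F$ by $F/F_{\mathrm{tor}}$ one may assume $F$ is locally free. Dualizing the exact sequence $0\to K\to (\nu^*E')^*\to F\to 0$ produces a subbundle inclusion $F^*\hookrightarrow \nu^*E'$, and since $F$ is locally free one has $\deg F=-\deg F^*$. Thus it suffices to prove that every subsheaf $G\subset \nu^*E'$ satisfies $\deg G\le 0$ (saturating $G$ only increases its degree, so one may test with subbundles as well).

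Finally, the assumption $E'\subset \ker\theta$ pulls back to the inclusion $\nu^*E'\subset \ker(\nu^*\theta)$, hence any subsheaf $G\subset \nu^*E'$ is automatically a subsheaf of $\ker(\nu^*\theta:\nu^*E\to \nu^*E\otimes \nu^*\Omega_X(\log\,D))$. The last assertion of Theorem~\ref{semipositivity} then yields $\deg G\le 0$, which completes the argument. There is no substantive obstacle here: the content of the corollary is entirely absorbed by Theorem~\ref{semipositivity}, and the role of local splittness is only to guarantee that the formation of $(E')^*$ and of subsheaves of $\nu^*E'$ behaves well under pullback to~$C$.
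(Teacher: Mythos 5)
Your proposal is correct and follows essentially the same route as the paper: pull back along a strongly liftable separable $\nu:C\to(X,D)$, use local splitness to see $\nu^*E'\subset\ker\nu^*\theta$ as a subbundle of $\nu^*E$, apply the last assertion of Theorem~\ref{semipositivity} to subsheaves of $\nu^*E'$, and dualize. The only difference is that you spell out the standard duality reduction on the curve (killing torsion in the quotient and passing to $F^*\hookrightarrow\nu^*E'$), which the paper compresses into ``passing to the dual of $\nu^*E'$''.
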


Over complex numbers a typical example of application of such a result
is to semipositivity of direct images of relative canonical sheaves.
This happens also in positive characteristic and we prove the
following result (see Corollary \ref{canonical-Illusie} for a more
precise version).

\begin{Corollary}
  Let $X$ and $Y$ be smooth projective varieties and let $B$ be a
  normal crossing divisor on $Y$. Let $f:X\to Y$ be a smooth
  surjective morphism of relative dimension $d$, which has semi-stable
  reduction along $B$.  Let us set $D=f^{-1}(B)$.  Assume that there
  exists a lifting $\tilde f: (\tilde X, \tilde D)\to (\tilde Y,
  \tilde B)$ of $f$ to $W_2(k)$ with $\tilde f$ a semi-stable
  reduction along $\tilde B$. Assume that $p>d+\dim Y$.  Then
  $R^jf_*(\omega_{X/Y}(D))$ is a $W_2$-nef locally free sheaf on $(Y, B)$ for all integers $j\ge 0$.
\end{Corollary}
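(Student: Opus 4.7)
The plan is to construct a logarithmic Higgs bundle $(E,\theta) \in \HIG{0}(Y,B)$ whose Higgs kernel contains a locally split subsheaf whose dual is, up to a twist by $\cO_Y(B)$, the sheaf $f_*\omega_{X/Y}(D)$, and then to invoke Corollary \ref{semipositivity-cor}.

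First I would form the relative log Hodge bundle
$$E = \bigoplus_{i+j=d} R^{j} f_* \, \Omega^i_{X/Y}(\log D/B),$$
endowed with the Kodaira-Spencer Higgs field $\theta: E \to E \otimes \Omega_Y^1(\log B)$. The semistable reduction makes $\Omega^1_{X/Y}(\log D/B)$ locally free, and the lifting $\tilde f$ to $W_2(k)$ combined with the hypothesis $p > d + \dim Y$ lets one apply Illusie's logarithmic extension of Deligne-Illusie to obtain degeneration of the relative Hodge-to-de Rham spectral sequence; consequently each summand of $E$ is locally free of the expected rank and its formation commutes with base change. I would then argue that $(E,\theta) \in \HIG{0}(Y,B)$: vanishing of Chern classes (in $\ell$-adic cohomology) follows because $E$ is the associated graded of the Gauss-Manin de Rham realization of a lisse $\ell$-adic sheaf, the rank and characteristic bounds are accommodated by $p > d + \dim Y$, and the slope semistability of $(E,\theta)$ is extracted from the nearby-cycles and curve-restriction machinery developed earlier in the paper.

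With $(E,\theta) \in \HIG{0}(Y,B)$ in hand, the summand $E' := R^d f_*\cO_X$ is locally split in $E$, and since $\theta$ strictly lowers the Hodge degree while $E'$ sits at degree zero, we have $E' \subset \ker \theta$. Corollary \ref{semipositivity-cor} then yields that $(R^d f_*\cO_X)^*$ is $W_2$-nef on $(Y,B)$. Relative logarithmic Grothendieck-Serre duality identifies this dual with $f_* \Omega^d_{X/Y}(\log D/B)$, and since $\cO_X(D) = f^*\cO_Y(B)$ for a semistable reduction, the projection formula gives
$$f_*\omega_{X/Y}(D) \;\cong\; f_*\Omega^d_{X/Y}(\log D/B) \otimes \cO_Y(B).$$
For any curve $\nu: C \to (Y,B)$ strongly liftable to $W_2(k)$, the pullback $\nu^*B$ is an effective divisor on $C$, so $\deg \nu^*\cO_Y(B) \ge 0$; tensoring with $\cO_Y(B)$ therefore preserves $W_2$-nefness, giving the conclusion.

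The main obstacle is the second step: establishing simultaneously local freeness, vanishing Chern classes, the rank bound, and slope semistability of $(E,\theta)$ in positive characteristic. This demands Illusie's logarithmic Hodge-to-de Rham degeneration in the semistable case (for which the bound $p > d + \dim Y$ is essential) together with the full strength of the nearby-cycles formalism developed in earlier sections. Once those ingredients are available, the remaining steps---Serre duality, the projection formula, and the twist-compatibility of $W_2$-nefness---are routine.
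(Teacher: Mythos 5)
Your overall strategy coincides with the paper's: form the graded logarithmic Hodge bundle $E=\bigoplus_{i+j=d} R^jf_*\Omega^i_{X/Y}(\log\, D/B)$ with the Kodaira--Spencer Higgs field, note that the summand $R^df_*\cO_X$ is a locally split subsheaf contained in $\ker\theta$, and feed this into Corollary \ref{semipositivity-cor} together with relative duality. One remark on the middle step: the mechanism that puts $(E,\theta)$ into $\HIG{0}(Y,B)$ is not the nearby-cycles or curve-restriction machinery (which plays no role here), but the one-periodicity of the Higgs--de Rham flow. Illusie's theorem, rewritten through the inverse Cartier transform, gives $C^{-1}_{(\tilde Y,\tilde B)}(\Gr_F R^df_*\Omega_{X/Y}^{\bullet}(\log\, D/B),\kappa)\simeq (R^df_*\Omega_{X/Y}^{\bullet}(\log\, D/B),\nabla_{GM})$; periodicity then yields slope semistability via \cite[Proposition 1]{La2} (exactly as in Corollary \ref{canonical-smooth}) and also forces the vanishing of the Chern classes, since $C^{-1}$ multiplies $c_i$ by $p^i$. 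Your appeal to the $l$-adic realization of the Gauss--Manin connection does not by itself give vanishing of the coherent Chern classes.

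The genuine gap is in your last step. For a semistable reduction one has $f^*B=D$ as divisors, hence $\det\Omega^1_{X/Y}(\log\, D/B)=\omega_{X/Y}(D-f^*B)=\omega_{X/Y}$, and the duality argument produces $W_2$-nefness of $f_*\Omega^d_{X/Y}(\log\, D/B)=(R^df_*\cO_X)^*=f_*\omega_{X/Y}$; your identity $f_*\omega_{X/Y}(D)\cong f_*\Omega^d_{X/Y}(\log\, D/B)\otimes\cO_Y(B)$ is correct. However, the claim that $\deg\nu^*\cO_Y(B)\ge 0$ for every strongly liftable $\nu:C\to (Y,B)$ is false: Definition \ref{strongly-liftable} explicitly allows $\nu(C)$ to lie inside a component $B_1$ of $B$ (handling such curves is the whole point of the nearby-cycles formalism), and then $\deg\nu^*\cO_Y(B)$ contains the contribution $B_1\cdot\nu_*C$, which can be negative --- take, e.g., $C=B_1$ an exceptional curve on a liftable surface. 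So twisting by $\cO_Y(B)$ does not preserve $W_2$-nefness, and the passage from $f_*\Omega^d_{X/Y}(\log\, D/B)$ to $f_*(\omega_{X/Y}(D))$ is not justified. (The paper is terse on this point as well: its argument establishes $W_2$-nefness of the first nonzero Hodge piece $f_*\Omega^d_{X/Y}(\log\, D/B)$, which it denotes $f_*(\omega_{X/Y}(D))$, but which for a semistable reduction is canonically $f_*\omega_{X/Y}$.)
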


This is a positive characteristic analogue of various semipositivity
results due to Griffiths \cite{Gr}, Fujita \cite{Fu}, Kawamata
\cite{Ka}, Fujino--Fujisawa \cite{FF} and others.

In positive characteristic $p$ there are well-known examples due to
L. Moret--Bailly (see \cite[Expos\'e 8]{MB}), who showed for any
integer $n\ge 1$ and any $p$ a family of smooth abelian surfaces $f:
X\to \PP^1$ such that $f_* \omega_{X/\PP ^1}=\cO_{\PP^1} (-n)\oplus
\cO_{\PP^1} (pn)$. In particular, one needs to add some additional
assumptions to be able to get semipositivity results.  The only known
results on semipositivity in positive characteristic concern either
$f_*(\omega_{X/Y}^m(mD))$ for $m\gg 0$ (see \cite{Pa} in case
$\omega_{X/Y}(D)$ is $f$-nef, or \cite{Ej} in case of relative
dimension $1$ or $2$) or they deal with $f_*\omega_{X/Y}$ adding very
strong assumptions on the fibers (see \cite{Szp} for the case
$\dim X=2$ and $\dim Y=1$, and \cite[Theorem 6.4]{Pa2} for a rather
complicated statement).

\medskip

One of the important  results that we prove is the following theorem that is a special case of Theorem
\ref{strong-log-freeness}.

\begin{Theorem}\label{weak-log-freeness}
  Let $E$ be a rank $r$ reflexive sheaf with $c_1(E)=0$ (in $H^{2}_{\et}(X, \QQ _l)$ for some $l\ne
  p$) and $c_2
  (E)H^{n-2}=0$.  Assume that $E$ has a filtration $M_{\bullet}$ such
  that all factors of the filtration are torsion free of rank $\le p$
  with $\mu _{H}(\Gr _j^ME)=\mu _{H}(E)$.  Let us also assume that
  each factor has a structure of a slope $H$-semistable sheaf with an integrable logarithmic
  connection on $(X, D)$.  Then $E$ is locally free and it has
  vanishing Chern classes in $H^{2*}_{\et}(X, \QQ _l)$ for any $l\ne
  p$. Moreover, every quotient $\Gr _j^ME$ is locally free and
  has vanishing Chern classes in $H^{2*}_{\et}(X, \QQ_l)$.
\end{Theorem}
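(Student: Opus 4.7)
The strategy is induction on the length $\ell$ of the filtration $M_\bullet$. The base case $\ell=1$ amounts to the following statement, which I would treat as the key auxiliary input: a torsion-free, slope $H$-semistable logarithmic connection of rank $\le p$ on $(X,D)$ with $c_1 \cdot H^{n-1}=0$ and $\Delta \cdot H^{n-2}=0$, admitting an appropriate lift to $W_2(k)$, is locally free and has vanishing Chern classes in $H^{2*}_{\et}(X,\QQ_l)$. This is the equality case of a Bogomolov-type inequality in positive characteristic, applied in the logarithmic connection setting; its proof should proceed via the logarithmic Ogus--Vologodsky / Lan--Sheng--Zuo correspondence combined with Langer's analysis of semistable sheaves attaining equality in Bogomolov.

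For the inductive step, I would invoke the classical additivity identity
\begin{equation*}
\frac{\Delta(E)\cdot H^{n-2}}{\rk E} \;=\; \frac{\Delta(M_{\ell-1}E)\cdot H^{n-2}}{\rk M_{\ell-1}E} + \frac{\Delta(\Gr_\ell^M E)\cdot H^{n-2}}{\rk \Gr_\ell^M E},
\end{equation*}
which holds precisely when $\mu_H(M_{\ell-1}E)=\mu_H(\Gr_\ell^M E)=\mu_H(E)$, exactly the hypothesis of the theorem. Each $\Gr_j^M E$ is a torsion-free semistable logarithmic connection of rank $\le p$ on a variety lifting to $W_2(k)$, so it satisfies the Bogomolov-type inequality $\Delta \cdot H^{n-2} \ge 0$ available in this positive-characteristic setting. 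Combined with the vanishing $\Delta(E)\cdot H^{n-2}=0$, iterating this identity across the filtration forces $\Delta(M_jE)\cdot H^{n-2}=0$ and $\Delta(\Gr_j^M E)\cdot H^{n-2}=0$ for every $j$, while the equality of slopes gives $c_1 \cdot H^{n-1}=0$ on each factor.

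Applying the base case to each graded factor $\Gr_j^M E$ then yields that every such factor is locally free with vanishing Chern classes in $H^{2*}_{\et}(X,\QQ_l)$. Since an extension of a locally free sheaf by a locally free sheaf is locally free, the sheaf $E$, being an iterated extension of the $\Gr_j^M E$, is itself locally free; the Whitney formula in étale cohomology then yields vanishing of the total Chern class of $E$.

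The main obstacle is clearly the base case. One must bootstrap from the numerical vanishings $c_1 \cdot H^{n-1}=0$ and $\Delta \cdot H^{n-2}=0$ to the genuinely sharper conclusion of vanishing of all étale Chern classes, using the logarithmic connection structure in an essential way (the conclusion fails for general torsion-free sheaves). Establishing this base case --- presumably via transport to the Higgs side through the logarithmic Ogus--Vologodsky correspondence, followed by a restriction-to-curves argument using semistability restriction theorems --- is where the technical heart of the argument lies, and it is precisely the content that the more general Theorem \ref{strong-log-freeness}, of which this is stated to be a special case, must deliver.
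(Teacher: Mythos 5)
There is a genuine gap, and it sits exactly at the point the paper identifies as the delicate one. Your plan is: prove a base case for a single semistable logarithmic connection, then apply it to each graded piece $\Gr_j^ME$ and conclude by iterated extensions. The problem is that the hypotheses only give you that the factors $\Gr_j^ME$ are \emph{torsion free}, while the correct form of the base case (Theorem \ref{log-freeness} in the paper) requires the sheaf to be \emph{reflexive}. The version you state, for torsion-free sheaves with only $c_1H^{n-1}=0$ and $\Delta H^{n-2}=0$, is false: starting from a locally free semistable example $F$ with vanishing Chern classes on a threefold, the subsheaf $I_pF$ (with the induced Higgs field) is torsion free, semistable, has the same $c_1$ and the same $\Delta H^{n-2}$ (they agree in codimension $2$), but is not locally free and has nonvanishing top Chern class. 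So "apply the base case to each graded factor" does not go through, and this is not a cosmetic issue --- the paper explicitly flags the confusion between torsion free and reflexive as the source of the gap in the earlier proof of \cite[Theorem 11]{La2} that this section is repairing.

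The paper's actual reduction supplies two ingredients that your proposal is missing. First, it argues by induction on $\dim X$ rather than on the length of the filtration: using the restriction theorem (Theorem \ref{Bogomolov-restriction-for-Higgs}) it restricts each factor to a general hyperplane section, applies the inductive hypothesis there, and deduces via Nakayama that each $\Gr_j^ME$ is locally free outside a \emph{finite set of points}; separately, Theorem \ref{log-freeness} applied to the reflexive hull shows $(\Gr_j^ME)^{**}$ is locally free. Second, it invokes Lemma \ref{loc-free-passing-to-graded}, an $\Ext^2$-vanishing argument (valid for $n\ge 3$) which uses the reflexivity of the ambient sheaf $E$ to force the punctual torsion $(\Gr_j^ME)^{**}/\Gr_j^ME$ to vanish, so that the factors really are locally free and then so is $E$. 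Your numerical step (additivity of $\Delta/r$ plus Bogomolov forcing $\Delta(\Gr_j^ME)H^{n-2}=0$ and $c_1(\Gr_j^ME)=0$) matches the paper, modulo the small imprecision that equal slopes only give $\bigl(c_1(\Gr_i^ME)/r_i-c_1(\Gr_j^ME)/r_j\bigr)H^{n-1}=0$, and one needs the Hodge index theorem to control the sign of the cross term before the equality case pins everything down. But without the two ingredients above, the passage from the reflexive-hull statement to local freeness of the actual factors --- and hence of $E$ --- is unproven.
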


This result can be thought of as an analogue of a graded version of
Schmid's nilpotent orbit theorem (see Remark \ref{Schmid-nilpotent}).
In the case $D=0$  Theorem \ref{weak-log-freeness} gives \cite[Theorem 11]{La2} and fills in a gap in its proof. 
The stronger version, Theorem \ref{log-freeness}, generalizes  Theorem \ref{weak-log-freeness} to the case of Higgs sheaves 
with possibly non-trivial Chern classes and in  the case $D=0$ it is indispensable for the proofs of \cite[Theorem 3.6, 
Corollary-Definition 3.8 and Theorem 3.10]{SYZ}. 
In this last case Theorem \ref{log-freeness} allows to compute higher Chern
classes of twisted preperiodic Higgs bundles. 

\medskip

The structure of the paper is as follows. In Section 1 we recall some
results and prove a few auxiliary results used in the sequel. In
Section 2 we prove Theorem \ref{weak-log-freeness} and we show several
applications.
In Section 3 we construct a nearby-cycles functor and we check that it
preserves some semistability conditions. We also study semistability
of factors of the monodromy filtration associated to residue
endomorphisms of logarithmic Higgs sheaves.  Section 3 is devoted to
applications of these results to semistability and semipositivity of
restriction of Higgs bundles to curves. We also give some geometric
applications to semipositivity of direct images of relative canonical
sheaves. The appendix contains a proof of the functoriality of the inverse
Cartier transform in the logarithmic case.

\subsection*{Notation}

Let $X$ be a smooth variety defined over an algebraically closed field $k$ and let $D$ be a normal crossing divisor on $X$.  We often view $D$ as a closed subscheme of $X$ given locally by one equation but by abuse of notation we also identify $D$ with the corresponding Weil divisor and write $D=0$ instead of $D=\emptyset$. All normal crossing divisors in the paper are reduced simple normal crossing divisors. Sometimes we add ''simple'' to stress the place, 
where we need to use this assumption.

Let us recall that a \emph{logarithmic Higgs sheaf} is a pair
$(E,\theta)$ consisting of a coherent $\cO_X$-module and an
$\cO_X$-linear map $\theta: E\to E\otimes \Omega_X(\log\, D)$ such that
$\theta\wedge \theta=0$.  A \emph{system of logarithmic Hodge sheaves} is a
Higgs sheaf $(E,\theta)$ with a decomposition $E=\bigoplus E^{p,q}$ such that $\theta$ maps $E^{p,q}$ into
$E^{p-1,q+1}\otimes \Omega_X(\log \, D)$.

In this paper if $X$ is projective and we say that a logarithmic Higgs sheaf $(E, \theta)$
is slope $H$-semistable for some ample $H$ then we always implicitly
assume that $E$ is torsion free.   Let us recall that a  system of logarithmic Hodge sheaves
is slope $H$-semistable as a system of logarithmic Hodge sheaves if and only if it is 
slope $H$-semistable as a logarithmic Higgs sheaf (see \cite[Corollary 3.5]{La4}).

\medskip

Now let $S$ be any scheme. We say that $(X,D)$ is a \emph{smooth log pair
  over $S$} if $X$ is a smooth $S$-scheme and $D$ is a relatively
simple normal crossing divisor over $S$. We say that $f: (Y,B)\to (X, D)$ is a morphism
of smooth log pairs if $f: Y\to X$ is a morphism and the support of $B$ contains 
the support of $f^{-1}(D)$.

\medskip
If $E$ is a coherent sheaf of rank $r$ on a smooth projective variety $X$ then we denote by $\ch (E)$
the Chern character of $E$. This is defined as an element of  the rational Chow ring  ${\mathrm{CH}}^* (X)\otimes \QQ$ but
in this paper we abuse notation and denote by $\ch (E)$ the image of this class by the cycle map and we treat it
as an element of the \'etale cohomology ring $H^{*}_{\et}(X, \QQ _l)$, where $l$ is different from the characteristic of the base field (or an element of $H^{*}(X, \QQ )$ in case of complex manifolds). 
By $\Delta (E)$ we denote the discriminant of $E$ defined as $2r c_2(E)-(r-1)c_1^2(E)$. In case of surfaces  we use the degree map $\int_X$ to identify the cohomology group $H^{4}_{\et}(X, \QQ _l)$ (or $H^{4}(X, \QQ)$) with $\QQ _{l}$ (respectively, $\QQ$) and we think of $\Delta (E)$ as an integer. Similarly, in higher dimensions the top degree intersections like $\Delta(E) H^{\dim X-2}$ denote the degree of the cycle $\Delta(E) H^{\dim X-2}$.

\section{Preliminaries}

\subsection{Logarithmic Higgs sheaves}

In this subsection we recall a few results on semistable logarithmic
Higgs sheaves.  Throughout this subsection we fix the following
notation.

Let $X$ be a smooth variety of dimension $n$ defined over an
algebraically closed field $k$ of characteristic $p$.  Let $D$ be a
normal crossing divisor on $X$.

\medskip

Let us recall the following theorem due to Ogus and Vologodsky in the
usual case (see \cite{OV}) and Schepler in the logarithmic one (see
\cite{Sc}; see also \cite[Theorem 2.5]{La3} and \cite[Appendix]{LSYZ}):

\begin{Theorem}\label{log-smooth-OV-correspondence}
  Let us assume that $(X,D)$ is liftable to $W_2(k)$ and let us fix
  such a lifting $(\tilde X, \tilde D)$. There exists a Cartier
  transform $C_{(\tilde X,\tilde D)}$, which defines an equivalence of
  categories of torsion free $\cO_X$-modules with an integrable
  logarithmic connection whose logarithmic $p$-curvature is nilpotent
  of level less or equal to $p-1$ and the residues are nilpotent of
  order less than or equal to $p$, and torsion free logarithmic Higgs
  $\cO_X$-modules with a nilpotent Higgs field of level less or equal
  to $p-1$.
\end{Theorem}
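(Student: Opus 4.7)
The plan is to reproduce the Ogus--Vologodsky construction, following its logarithmic adaptation by Schepler. The main player is the sheaf $\tilde{\mathcal{D}}_X(\log D)$ of logarithmic crystalline differential operators (the enveloping algebra of the Lie algebroid $T_X(\log \, D)$), whose modules are exactly the $\cO_X$-modules with an integrable logarithmic connection; the symmetric algebra $\Sym^{\bullet} T_X(\log \, D)$ plays the analogous role for logarithmic Higgs sheaves. First I would establish that $\tilde{\mathcal{D}}_X(\log D)$ is an Azumaya algebra of rank $p^{2n}$ over the symmetric algebra on $F^{*} T_{X'}(\log \, D')$, with the central embedding given by the $p$-curvature (so logarithmic Higgs modules are just quasi-coherent sheaves on $T^{*}X'(\log \, D')$).

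Next I would construct the splitting. Locally, if one picks a log-smooth lift $\tilde F$ of the relative Frobenius compatible with $\tilde D$, the classical formula $\zeta_{\tilde F}(\tilde \omega) = \tfrac{1}{p} d \tilde F^{*}(\tilde \omega)$ provides a splitting module for the Azumaya algebra on a neighborhood of the zero section of $T^{*} X'(\log \, D')$. A global $\tilde F$ need not exist, but the difference of two local Frobenius lifts defines a $1$-cocycle with values in $F^{*} \Omega_{X'}(\log \, D')$; using the divided-power structure on the PD-envelope of the zero section one can exponentiate this cocycle and glue the local splittings into a canonical global one. Restriction to the subcategories whose $p$-curvature has level $\le p-1$ (equivalently, to sheaves supported on the $(p-1)$-st infinitesimal neighborhood of the zero section) yields the Cartier transform $C_{(\tilde X, \tilde D)}$ together with its quasi-inverse.

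The main obstacle is the logarithmic refinement carried out by Schepler, i.e.\ tracking the residue along each component of $D$. One has to check that the splitting module produced from the bare $W_2$-lift is automatically compatible with the residues, and that the condition ``residues nilpotent of order $\le p$'' on the connection side corresponds exactly to the condition of nilpotent Higgs field on the Higgs side; for this one analyses both conditions locally, after passing to the inertia along a boundary component, where the structure of $\tilde{\mathcal{D}}_X(\log D)$ is governed by a twisted polynomial algebra whose $p$-th power behaviour matches the classical Artin--Schreier-type correction term. Functoriality in morphisms of log pairs requires verifying that the gluing cocycle is natural in the chosen $W_2$-lifts. Once these compatibilities are in place, the fact that $C_{(\tilde X, \tilde D)}$ is an equivalence of the stated categories is formal from the Azumaya/splitting formalism.
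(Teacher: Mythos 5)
First, note that the paper does not prove this statement at all: Theorem \ref{log-smooth-OV-correspondence} is recalled with references to \cite{OV} (smooth case) and \cite{Sc} (logarithmic case), see also \cite[Theorem 2.5]{La3} and \cite[Appendix]{LSYZ}, so what you are offering is a reconstruction of the proof in those references. The reconstruction has a genuine gap at its very first step: the claim that the sheaf of logarithmic crystalline differential operators is an Azumaya algebra of rank $p^{2n}$ over $\Sym^{\bullet} F^{*}T_{X'}(\log\, D')$ via the $p$-curvature is \emph{false} along $D$, and this failure is exactly what distinguishes the logarithmic case from \cite{OV}. Already for $X=\AA^1$, $D=\{0\}$ the algebra is generated over $\cO_X$ by $\delta=x\partial_x$ with $[\delta,x]=x$, the relevant central elements are $x^p$ and $\delta^p-\delta$ (since $\delta^{[p]}=\delta$), and the fiber over the point of the log cotangent space of $X'$ given by $x=0$, $\delta^p-\delta=0$ is $k\langle x,\delta\rangle/(x^p,\ \delta x-x\delta-x,\ \delta^p-\delta)$, which contains the nonzero nilpotent two-sided ideal generated by $x$ and hence is not a matrix algebra. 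So there is no splitting-module formalism over the whole of $T^{*}X'(\log\, D')$, and since the failure occurs precisely at the zero section over points of $D$ --- i.e.\ in the nilpotent region where the theorem lives --- it cannot be evaded by the hypotheses on the level of the $p$-curvature or the Higgs field. Consequently the concluding sentence ``the equivalence is formal from the Azumaya/splitting formalism'' does not get off the ground.

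This is exactly the gap that Schepler's work closes: he does not work with the naive Azumaya picture but with Lorenzon's indexed algebras, constructing the Cartier transform in that indexed setting, and the analysis of the residues there is a substantive part of the argument rather than the automatic compatibility you assert. An alternative route, and the one this paper actually relies on for explicit computations (see the Appendix and \cite{LSZ2}, \cite[Appendix]{LSYZ}, \cite[Theorem 2.5]{La3}), bypasses the global Azumaya formalism entirely: one defines $C^{-1}_{(\tilde X,\tilde D)}$ locally by choosing logarithmic Frobenius liftings compatible with $\tilde D$, setting $\nabla=\nabla_{can}+(\id\otimes\zeta)\circ F^{*}\theta$, and glues the local constructions by the exponential twisting $\exp(\tau_{\alpha\beta})$ attached to differences of Frobenius liftings; the nilpotency bounds $\le p-1$ are what make these exponentials well defined. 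If you want to salvage your outline you must either switch to the indexed (or exponential-twisting) construction, or restrict the Azumaya claim to the locus where it actually holds and explain separately what happens along $D$ --- including why ``residues nilpotent of order $\le p$'' on the de Rham side corresponds to no extra condition beyond nilpotency of the Higgs field on the Higgs side, and why the equivalence preserves torsion-freeness of the underlying coherent modules.
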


\medskip

From now on in this subsection we assume that $X$ is projective and we
fix an ample divisor $H$ on $X$. Let us recall the following
boundedness result for logarithmic Higgs sheaves.

\begin{Theorem} \label{boundedness}
  Let us fix some number $\Delta$ and a class $c\in H^2_{\et}(X, \QQ_l)$
  for some $l\ne p$. The family of slope $H$-semistable logarithmic
  Higgs sheaves $(E, \theta)$ such that $E$ is reflexive with fixed
  rank $r$, $c_1(E)=c$ and $\Delta(E) H^{n-2}\le \Delta$ is bounded.
\end{Theorem}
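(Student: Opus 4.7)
The plan is to reduce boundedness of the family of pairs $(E,\theta)$ to boundedness of the family of underlying reflexive sheaves $E$. Indeed, if the $E$'s form a bounded family, then the sheaves $\cHom (E, E\otimes \Omega_X(\log\, D))$ vary in a bounded family, and the integrability condition $\theta\wedge \theta=0$ together with slope $H$-semistability cuts out a bounded subscheme of this bounded parameter space of linear maps. It therefore suffices to bound reflexive $E$ of rank $r$ with $c_1(E)=c$ and $\Delta(E)H^{n-2}\le \Delta$ that admit at least one slope $H$-semistable logarithmic Higgs structure.

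The underlying $E$ need not itself be semistable, so the classical Langer boundedness theorem for semistable torsion-free sheaves does not apply directly; the key step is to bound the instability of $E$ using $\theta$. Consider the Harder--Narasimhan filtration $0=E_0\subset \ldots \subset E_s=E$. Each graded piece $\Gr _i E$ is semistable and the slopes $\mu(\Gr _i E)$ strictly decrease, hence the component $\theta_{ij}: \Gr _i E\to \Gr _j E \otimes \Omega_X(\log\, D)$ induced by $\theta$ must vanish as soon as $\mu(\Gr _i E)-\mu(\Gr _j E)$ exceeds a constant depending only on $r$ and $\mu_{\max}(\Omega_X(\log\, D))$. Thus $\theta$ preserves some coarsening of the HN filtration, whose first step is a proper Higgs subsheaf of $(E,\theta)$. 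Slope $H$-semistability of $(E,\theta)$ then yields an \emph{a priori} upper bound on $\mu_{\max}(E)-\mu(E)$.

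In positive characteristic this argument has to be applied to all Frobenius pullbacks $(F^m)^*(E,\theta)$ simultaneously, since $\mu_{\max}$ is not stable under $F^*$; one replaces $\mu_{\max}$ and $\mu_{\min}$ by the asymptotic invariants $L_{\max},L_{\min}$ of Langer and combines the HN-coarsening argument with a Bogomolov-type inequality for Higgs-semistable sheaves to obtain a uniform bound on $L_{\max}(E)$, and hence on $\mu_{\max}(E)$. I expect this Frobenius-stability step to be the main technical obstacle, since naive control of $\mu_{\max}-\mu_{\min}$ can be destroyed after a single Frobenius pullback. Once $\mu_{\max}(E)$ is bounded in terms of the fixed invariants, Langer's boundedness theorem for torsion-free sheaves with fixed rank, fixed $c_1$, and bounded $\mu_{\max}$ and $\Delta H^{n-2}$ completes the proof; passage to double duals preserves reflexivity without increasing $\Delta H^{n-2}$ in the stated regime.
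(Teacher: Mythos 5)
Your proposal is correct and follows essentially the same route as the paper: the paper's proof simply invokes \cite[Lemma 5]{La2}, whose content is exactly your Harder--Narasimhan coarsening argument (with the $L_{\max}/L_{\min}$ refinement to handle Frobenius pullbacks) giving $\mu_{\max,H}(E)\le \mu_H(E)+(r-1)C$ for Higgs-semistable sheaves, and then concludes by Langer's boundedness theorem \cite[Theorem 3.4]{La1}. Your additional remark that boundedness of the underlying sheaves implies boundedness of the pairs $(E,\theta)$ is correct and is left implicit in the paper.
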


\begin{proof} By \cite[Lemma 5]{La2} one can find a constant $C$ such
  that for any rank $r$ slope $H$-semistable logarithmic Higgs sheaves
  $(E,\theta)$ we have $\mu_{\max, H}(E)\le \mu (E)+(r-1)C$.
Hence the result follows from \cite[Theorem 3.4]{La1}.
\end{proof}

Let us note that in the above theorem it is not sufficient to fix $r$,
$c_1(E)H^{n-1}$ and $\Delta(E) H^{n-2}$. We will also need the
following theorem, which is a special case of \cite[Theorem 5.5]{La4}.

\begin{Theorem}\label{deformation-to-system}
  Let $(E, \theta)$ be a slope
  $H$-semistable logarithmic Higgs sheaf. Then there exists a decreasing
  filtration $E=N^0\supset N^1\supset ...\supset N^m=0$ such that
  $\theta (N^i)\subset N^{i-1}\otimes \Omega_X (\log \,  D)$ and the associated
  graded is a slope $H$-semistable system of logarithmic Hodge sheaves.
  \end{Theorem}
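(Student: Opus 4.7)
The plan is to realize the filtration through a $\GG_m$-degeneration of $(E,\theta)$ combined with a Rees-type construction, the positive-characteristic analogue of Simpson's deformation of Higgs bundles to systems of Hodge bundles. Consider the $\GG_m$-action on logarithmic Higgs sheaves given by $t\cdot(E,\theta)=(E,t\theta)$. For $t\ne 0$ the Higgs sheaves $(E,t\theta)$ and $(E,\theta)$ share exactly the same Higgs subsheaves (a subsheaf $F\subset E$ satisfies $\theta(F)\subset F\otimes \Omega_X(\log\, D)$ iff $t\theta(F)\subset F\otimes \Omega_X(\log\, D)$), so each $(E,t\theta)$ is slope $H$-semistable with the same Hilbert polynomial. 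The first task is to extend the family $\{(E,t\theta)\}_{t\in \GG_m}$ to a flat family $\cE$ of logarithmic Higgs sheaves over $\Spec k[t]$ whose fibre at $t=0$ is still slope $H$-semistable.

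Existence of such an extension is a Langton-type statement. Start from some flat extension, say the constant family $E\otimes_k k[t]$ with Higgs field $t\theta$, whose special fibre $(E,0)$ need not be semistable as a Higgs sheaf. If the special fibre is not semistable, replace the family by an elementary transformation along a maximally destabilizing Higgs subsheaf of the special fibre. A standard discriminant/slope computation shows that each such modification strictly improves a numerical invariant measuring the failure of semistability at $t=0$, and by Theorem \ref{boundedness} the whole procedure stays inside a bounded family, so it terminates after finitely many steps. This is exactly what \cite[Theorem 5.5]{La4} provides, and I would invoke that result rather than reprove it.

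Once such a flat extension $\cE$ is available, it sits inside $E\otimes_k k[t,t^{-1}]$ as a coherent $\cO_X[t]$-submodule of Rees form $\cE=\sum_{i\in \ZZ} t^{-i} N^i$, for a finite decreasing filtration $E=N^0\supset N^1\supset \cdots \supset N^m=0$ of $E$. The condition that the ambient Higgs field $t\theta$ preserve $\cE$ translates exactly to the Griffiths transversality $\theta(N^i)\subset N^{i-1}\otimes \Omega_X(\log\, D)$, and the special fibre $\cE/t\cE$ is canonically identified with $\bigoplus_i N^i/N^{i+1}$ equipped with the associated graded of $\theta$ --- which by construction is a system of logarithmic Hodge sheaves. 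Semistability of this special fibre as a Higgs sheaf, guaranteed by the first step, is equivalent to semistability as a system of Hodge sheaves, as recalled in the Notation section.

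The main obstacle is the Langton-type extension itself. One must adapt the valuative criterion for semistability to logarithmic Higgs sheaves in positive characteristic, control how the maximal destabilizing Higgs subsheaf and the discriminant behave under elementary transformations along $\theta$-invariant subsheaves of the special fibre, and verify termination using boundedness. Once this step is granted, the Rees-module viewpoint on the extended family produces both the filtration $N^\bullet$ and the Griffiths transversality condition essentially for free.
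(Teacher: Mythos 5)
The paper offers no proof of this statement at all: it is introduced as ``a special case of \cite[Theorem 5.5]{La4}'', which is precisely the result you end up invoking, so your proposal and the paper coincide in substance. Your $\GG_m$-degeneration/Rees-module framing is a reasonable gloss on the mechanism behind that reference (with the one unstated point that the Langton-type modifications must be performed $\GG_m$-equivariantly --- which they are, since the maximal destabilizing Higgs subsheaf of the special fibre is canonical --- for the extension to be of Rees form and hence to yield an actual filtration of $E$), but once \cite[Theorem 5.5]{La4} is granted the extra scaffolding is not needed.
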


\cite[Theorem 5.5]{La4} (see also \cite[Theorem A.4]{LSZ2} in case of flat torsion free sheaves) 
implies also the following result:

\begin{Theorem} \label{existence-of-gr-ss-Griffiths-transverse-filtration}
	If $(E, \nabla)$ is a slope $H$-semistable sheaf with an integrable logarithmic connection 
	then there exists a canonical Griffiths transverse filtration $E=S^0\supset S^1\supset ...\supset S^m=0$ 
	such that the associated graded system of logarithmic Hodge sheaves is slope $H$-semistable.
\end{Theorem}

The canonical filtration $S^{\bullet}$ from Theorem \ref{existence-of-gr-ss-Griffiths-transverse-filtration} is called
\emph{Simpson's filtration} of $(E, \nabla)$. This notion is used in the following generalization of \cite[Theorem
5.12]{La4} and \cite[Theorem 2.2]{LSZ} (see \cite[Theorem 3.1]{La3}).

\begin{Theorem}\label{Higgs-de-Rham}
  Assume the pair $(X,D)$ admits a lifting $(\tilde X, \tilde D)$ to $W_2(k)$. If $(E,
  \theta)$ is a slope $H$-semistable system of logarithmic Hodge
  sheaves of rank $r\le p$ then there exists a canonically defined
  Higgs--de Rham sequence
$$ \xymatrix{
  & (V_0, \nabla _0)\ar[rd]^{\Gr _{S_0}}&& (V_1, \nabla _1)\ar[rd]^{\Gr _{S_1}}&\\
  (E_0, \theta _0)=(E, \theta)\ar[ru]^{C_{(\tilde
  X, \tilde D)}^{-1}}&&(E_1, \theta_1)\ar[ru]^{C_{(\tilde
  X, \tilde D)}^{-1}}&&...\\
}$$ in which each $(V_i, \nabla_i)$ is slope $H$-semistable and
$(E_{i+1}, \theta _{i+1})$ is the slope $H$-semistable system of
logarithmic Hodge sheaves associated to $(V_i, \nabla _i)$ via its
Simpson's filtration $S^{\bullet}_i$.
\end{Theorem}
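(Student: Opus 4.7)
The plan is to construct the sequence by simultaneous induction on $i$, alternating the inverse Cartier transform with the passage to the graded of a canonical Griffiths-transversal filtration. The base case sets $(E_0,\theta_0)=(E,\theta)$; the inductive hypothesis is that $(E_i,\theta_i)$ is a slope $H$-semistable system of logarithmic Hodge sheaves of rank $r\le p$.

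For the connection step, I first verify that Theorem~\ref{log-smooth-OV-correspondence} applies to $(E_i,\theta_i)$. Because the Higgs field of a system of Hodge sheaves shifts the Hodge grading by one, its $(r-1)$-st iterate vanishes, hence $\theta_i$ is nilpotent of level at most $r-1\le p-1$; a parallel argument on each graded piece shows that the residues of $\theta_i$ are nilpotent of order $\le r\le p$. Thus Theorem~\ref{log-smooth-OV-correspondence} produces a canonically defined logarithmic connection $(V_i,\nabla_i):=C^{-1}_{(\tilde X,\tilde D)}(E_i,\theta_i)$. The next, non-formal, step is to prove that $(V_i,\nabla_i)$ is slope $H$-semistable. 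This is the logarithmic analogue of the semistability preservation results \cite[Theorem 5.12]{La4} and \cite[Theorem 2.2]{LSZ}; I would follow those arguments, but using the boundedness input of Theorem~\ref{boundedness}, the logarithmic Bogomolov inequality (Theorem~\ref{log-Bogomolov-with-lifting}), and the logarithmic Bogomolov-type restriction theorem for Higgs sheaves (Theorem~\ref{Bogomolov-restriction-for-Higgs}) in place of their non-logarithmic counterparts.

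For the grading step, I would appeal to the connection-side analogue of Theorem~\ref{deformation-to-system}: any slope $H$-semistable logarithmic connection $(V_i,\nabla_i)$ of rank $\le p$ admits a canonical decreasing \emph{Simpson filtration} $S_i^{\bullet}$ with $\nabla_i(S_i^j)\subset S_i^{j-1}\otimes \Omega_X(\log\, D)$, whose associated graded, equipped with the $\cO_X$-linear map induced by $\nabla_i$ via Griffiths transversality, is a slope $H$-semistable system of logarithmic Hodge sheaves of the same rank. I would then set $(E_{i+1},\theta_{i+1}):=\Gr_{S_i^{\bullet}}(V_i,\nabla_i)$, closing the induction. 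Canonicity of the flow follows from the canonicity of $C^{-1}_{(\tilde X,\tilde D)}$ together with the canonical choice of $S_i^{\bullet}$, which can be pinned down, for example, as the coarsest Griffiths-transversal filtration refining the Harder--Narasimhan type filtration and whose graded is a semistable system of Hodge sheaves.

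The main obstacle is the logarithmic refinement of both inputs. For (i), semistability preservation under $C^{-1}_{(\tilde X,\tilde D)}$, one must control how the nilpotent residues of $\nabla_i$ interact with the potential destabilising subsheaves, so that the standard Higgs--de Rham deformation argument still yields a contradiction with Theorem~\ref{log-Bogomolov-with-lifting} after restriction to a generic complete intersection curve disjoint from $D$. For (ii), the existence and canonicity of Simpson's filtration for a semistable connection, one needs an analogue of the construction underlying Theorem~\ref{deformation-to-system} in which the Griffiths transversality condition $\nabla(S^j)\subset S^{j-1}\otimes \Omega_X(\log\, D)$ replaces the Higgs-type condition; the additional technical point is ensuring that the residues of the graded object are again nilpotent of order $\le p$, so that the induction can continue via Theorem~\ref{log-smooth-OV-correspondence}.
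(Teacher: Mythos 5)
The paper gives no proof of this statement: it is recalled verbatim from \cite[Theorem 3.1]{La2} (the reference keyed as La3 in the text), as a logarithmic generalization of \cite[Theorem 5.12]{La4} and \cite[Theorem 2.2]{LSZ}, so there is nothing in the paper itself to compare your argument against. Your outline does reconstruct the strategy of those cited works correctly: nilpotence of level $\le r-1\le p-1$ from the Hodge grading, the inverse Cartier transform of Theorem \ref{log-smooth-OV-correspondence}, semistability preservation via the iterate-and-bound argument, and the Simpson filtration, which is indeed available for connections as well as for Higgs sheaves since \cite[Theorem 5.5]{La4} is stated for modules over arbitrary Lie algebroids. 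One small correction: in your discussion of the semistability step you speak of restricting to a general complete intersection curve \emph{disjoint} from $D$; such a curve cannot avoid a nonempty divisor, and the correct setup (as in Theorem \ref{Bogomolov-restriction-for-Higgs}) is a general complete intersection meeting $D$ so that $D\cap Y$ remains a normal crossing divisor on $Y$.
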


\medskip

The following theorem is a generalization of \cite[Theorem 10]{La2} to
the logarithmic case.  We skip its proof as it is the same as in the
non-logarithmic case.

\begin{Theorem} \label{Bogomolov-restriction-for-Higgs} 
	Assume the pair $(X,D)$ is  liftable to $W_2(k)$.
	Let $d_0$ be a non-negative integer such that $T_X(-\log \,
	D)\otimes \cO_X(d_0H)$ is globally generated. Let $(E,  \theta)$ be 
	a slope $H$-stable logarithmic Higgs sheaf of rank $r\le p$.
	Let us take an integer $$d>\frac{r-1}{ r}\Delta (E)H^{n-2} +\frac{1}{r(r-1)H^n} .$$
	Moreover, if $r>2$ let us also assume that $d>2(r-1)^2d_0$. 
	Let $Y\in |dH|$ be a smooth divisor such that $E_Y$ has
	no torsion and $D\cap Y$ is a normal crossing divisor on $Y$. Then the
	logarithmic Higgs sheaf $(E_Y, \theta_Y)$ induced from  
	$(E,\theta)$ via restricting to $Y$ and composition $E_Y\to E_Y\otimes \Omega_X(\log \, D)|_Y\to
	E_Y\otimes \Omega_Y (\log \, D\cap Y)$, is slope $H_Y$-stable.
\end{Theorem}

\begin{Corollary} \label{Bogomolov-restriction-for-Higgs-ss} 
	Assume the pair $(X,D)$ is  liftable to $W_2(k)$ and let $d_0$ be as in the previous theorem. Let $(E,  \theta)$ be 
	a slope $H$-semistable logarithmic Higgs sheaf of rank $r\le p$ and let $d$ be an integer satisfying the same 
	inequalites as in the previous theorem.
	Then for a general divisor   $Y\in |dH|$  the restriction  $(E_Y, \theta_Y)$ is  slope $H_Y$-semistable.
\end{Corollary}

\begin{proof}
Let $M_{\bullet}$ be a Jordan--H\"older filtration of $(E, \theta)$. By definition this means that 
all the quotients $\Gr _i^ME$ are slope $H$-stable logarithmic Higgs sheaves with slopes $\mu_H (\Gr _i^ME)$
equal to $\mu _H (E)$. Existence of such a filtration for logarithmic slope $H$-semistable logarithmic Higgs sheaves
is standard and follows by the same arguments as for the usual slope $H$-stable sheaves
(see, e.g., \cite[1.5 and 1.6]{HL}). To simplify notation let us set $E_i=\Gr _i^ME$ (we consider it as a logarithmic Higgs sheaf and not only a sheaf) and $r_i=\rk E_i$. 
Then the Hodge index theorem implies that 
$$
\frac{\Delta (E)H^{n-2}}{ r}= \sum \frac{\Delta
	(E_i)H^{n-2}}{ r_i} -\frac{1}{r}\sum_{i<j} r_ir_j \left(
\frac{c_1E_i}{r_i}-\frac{c_1E_j}{ r_j}\right) ^2
H^{n-2}
\ge \sum \frac{\Delta (E_i)H^{n-2}}{ r_i}.
$$
Therefore our assumptions on $d$ imply that we can apply Theorem \ref{Bogomolov-restriction-for-Higgs}
to each quotient $E_i$. So if we choose a smooth divisor $Y\in |mH|$ such that $D\cap Y$ is a normal crossing divisor on $Y$ and $(E_i)_Y$ has no torsion for every $i$ then  the restricted logarithmic Higgs sheaf $(E_i)_Y$ is slope $H$-stable and
hence $(E_Y, \theta_Y)$ is  slope $H_Y$-semistable. 
Note that general $Y\in |mH|$ satisfies the above assumptions by Bertini's theorem and Lemma \ref{HL-Corollary}.
\end{proof}

\begin{Remark} \label{Bogomolov-restriction-for-connections-remark}
	In Theorem \ref{Bogomolov-restriction-for-Higgs} and Corollary \ref{Bogomolov-restriction-for-Higgs-ss}
we can replace a logarithmic Higgs sheaf with a sheaf with an integrable logarithmic connection. The proofs of the results remain the same.
\end{Remark}

\medskip

Let us also recall Bogomolov's inequality for logarithmic Higgs
sheaves (see \cite[Theorem 3.3]{La3} for a more general version).

\begin{Theorem} \label{log-Bogomolov-with-lifting} 
	Assume that $(X,D)$ admits a lifting to $W_2(k)$.  Then for any
	slope $H$-semistable logarithmic Higgs sheaf $(E, \theta)$ of rank
	$r\le p$ we have
	$$\Delta (E) H^{n-2}\ge 0.$$
\end{Theorem}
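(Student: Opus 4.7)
The plan is to reduce to the surface case, then run a Higgs--de Rham iteration using Schepler's logarithmic Cartier transform, with boundedness of semistable families as the final closing argument.

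First I would reduce to $n=2$. Cutting $X$ by general members of $|mH|$ for $m$ sufficiently large, the restriction theorem (Theorem \ref{Bogomolov-restriction-for-Higgs}), applied to the Jordan--H\"older factors of $(E,\theta)$, together with its standard semistable extension, ensures that the restriction to a general complete intersection surface $Y$ remains slope $H|_Y$-semistable. Choosing the sections so that $(Y,D\cap Y)$ is a smooth log pair whose $W_2(k)$-lifting is induced from that of $(\tilde X,\tilde D)$, and using that $\Delta(E|_Y)=\Delta(E)H^{n-2}$, we may assume $n=2$.

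Second, by Theorem \ref{deformation-to-system} I would deform $(E,\theta)$ to its associated graded along the filtration $N^\bullet$; this yields a slope $H$-semistable \emph{system of logarithmic Hodge sheaves} of the same rank, $c_1$ and $\Delta$. In particular the Higgs field is nilpotent of level $\le r-1\le p-1$, and the residues of $\theta$ are nilpotent of order $\le r\le p$, so the hypotheses of Schepler's inverse Cartier transform (Theorem \ref{log-smooth-OV-correspondence}) are satisfied. Applying $C^{-1}_{(\tilde X,\tilde D)}$ we obtain a slope $H$-semistable logarithmic connection $(V,\nabla)$ of rank $r$ with $c_1(V)=p\, c_1(E)$ and $\Delta(V)=p^{2}\Delta(E)$, since the underlying sheaf is a twist of $F^{*}E$ by a class of degree zero with respect to $H$.

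Third, I would iterate the flow. Take the Harder--Narasimhan filtration of the underlying $\cO_X$-module $V$; its associated graded $\mathrm{gr}(V)$ inherits a logarithmic Higgs field induced from $\nabla$ (the pieces of the HN filtration are not preserved by $\nabla$, but the induced map on the graded is $\cO_X$-linear with values in $\Omega_X(\log D)$ and squares to zero). The resulting Higgs bundle $(E_1,\theta_1)$ is again slope $H$-semistable with the same rank, the same $c_1$ and the same $\Delta$ as $V$, and still satisfies the nilpotency hypotheses, so the construction iterates. After $N$ steps one gets slope $H$-semistable logarithmic Higgs bundles $(E_N,\theta_N)$ with $\Delta(E_N)H^{n-2}=p^{2N}\Delta(E)H^{n-2}$ and with $c_1(E_N)$ determined by $c_1(E)$ modulo $p^N$. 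If $\Delta(E)H^{n-2}<0$, then $\Delta(E_N)H^{n-2}\to -\infty$, contradicting the boundedness of the family in Theorem \ref{boundedness} (after twisting to keep $c_1$ in a bounded set of classes). Hence $\Delta(E)H^{n-2}\ge 0$.

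The main obstacle I expect is the logarithmic bookkeeping in the third step: verifying that the HN-graded of the underlying sheaf of $C^{-1}(E,\theta)$ actually carries a well-defined logarithmic Higgs field and that the resulting Higgs bundle again has nilpotent Higgs field and residues so that $C^{-1}$ can be re-applied. One also needs a careful comparison between the Chern classes of the Frobenius pullback and those of $C^{-1}(E,\theta)$ in the logarithmic setting, which in positive characteristic is sensitive to the chosen $W_2(k)$-lifting of $(X,D)$; this is where the standing assumption on the lifting of $(X,D)$ enters essentially.
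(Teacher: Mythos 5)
The strategy you outline --- deform to a system of logarithmic Hodge sheaves, apply the inverse Cartier transform, extract a new semistable Higgs sheaf whose discriminant is multiplied by $p^2$, iterate, and contradict boundedness --- is exactly how this inequality is proved in \cite[Theorem 8]{La2} and its logarithmic version \cite[Theorem 3.3]{La3}; the present paper only cites the result. However, your third step fails as written. The associated graded of the Harder--Narasimhan filtration of the $\cO_X$-module $V$ does \emph{not} in general carry an induced Higgs field: for $\nabla$ to induce $\cO_X$-linear maps $\Gr^i\to \Gr^{i-1}\otimes\Omega_X(\log D)$ the filtration must be Griffiths-transverse, i.e.\ $\nabla(F^i)\subset F^{i-1}\otimes\Omega_X(\log D)$, and the slope argument that would force this requires gaps between the relevant HN slopes exceeding $\mu_{\max,H}(\Omega_X(\log D))$ --- a condition the HN filtration does not satisfy, since consecutive HN slopes can be arbitrarily close. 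This is precisely why one needs Simpson's filtration, built by iterated destabilizing modifications; in the present paper this is the module-with-connection version of Theorem \ref{deformation-to-system} (i.e.\ \cite[Theorem 5.5]{La4}), and the entire iteration you want is already packaged as the canonical Higgs--de Rham sequence of Theorem \ref{Higgs-de-Rham}, with $\Delta(E_m)H^{n-2}=p^{2m}\Delta(E)H^{n-2}$.

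Two further points. First, the reduction to surfaces is both unnecessary and logically delicate: the flow-plus-boundedness argument runs on $X$ itself in any dimension $n\ge 2$, whereas the effective restriction theorem (Theorem \ref{Bogomolov-restriction-for-Higgs}) that you invoke is itself standardly deduced \emph{from} Bogomolov's inequality, so using it here is circular unless you give it an independent proof. Second, the sheaves $E_m$ produced by the flow need not be reflexive (this is exactly the issue the paper flags in the proof of \cite[Theorem 11]{La2}), so Theorem \ref{boundedness} does not apply to them directly; one should pass to the reflexive hulls $(E_m^{**},\tilde\theta_m)$, which remain slope $H$-semistable, and note as in Lemma \ref{codim-3} that $\Delta(E_m^{**})H^{n-2}\le \Delta(E_m)H^{n-2}$, so that after the twist by $\det E^{-s_m}$ keeping $c_1$ in a finite set (as in Lemma \ref{stable-Higgs-de-Rham-sequence}) the contradiction with boundedness still goes through when $\Delta(E)H^{n-2}<0$.
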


\begin{Remark} \label{Bogomolov-inequality-for-connections}
The above theorem holds also for sheaves with an integrable logarithmic connection.
Indeed, if $(E, \nabla)$ is a rank $r\le p$ slope $H$-semistable sheaf with an integrable 
logarithmic connection and $S^{\bullet}$ is its  Simpson's filtration then 
by the above theorem
$$\Delta (E) H^{n-2}=\Delta (\Gr _S E) H^{n-2}\ge 0.$$
\end{Remark}

\subsection{Higher discriminants}

Let us fix  a smooth projective variety $X$ defined over an arbitrary
algebraically closed field $k$.

Let $E$ be a rank $r>0$ coherent sheaf on $X$. Let us fix a prime $l$
non-equal to the characteristic of the base field $k$ and let us write
$$\log (\ch (E))=\log r+\sum _{i\ge 1} (-1)^{i+1}\frac{1}{i!r^i}\Delta_i(E)$$
for some classes $\Delta_i(E)\in H^{2i}_{\et}(X, \QQ _l)$ that we call
\emph{higher discriminants} of $E$ (we can also use $\Delta_i(E)\in
H^{*}(X, \QQ )$ in case of complex manifolds). These discriminants are
polynomials in Chern classes of $E$ with integral coefficients. They
are variants of Drezet's logarithmic invariants (with somewhat
different normalization to get integral coefficients and
$\Delta_2(E)=\Delta(E)$). Note that for any line bundle $L$ we have
$\Delta_i(E\otimes L)=\Delta_i(E)$ for $i\ge 2$.  This follows
immediately from the fact that
$$\log (\ch (E\otimes L))= \log (\ch (E) \cdot \ch(L)) =\log (\ch (E))+c_1(L).$$
In the following we often use this property of discriminants without further notice.

\begin{Lemma} \label{equivalence-Delta}
  The following conditions in $H^{*}_{\et}(X, \QQ _l)$ (or in
  $H^{*}(X, \QQ )$ in case of complex manifolds) are equivalent:
\begin{enumerate}
\item $r^ic_i(E)=\binom{r}{i}c_1(E)^i$ for all $i\ge 1$,
\item $\Delta_i(E)=0$ for all $i\ge 2$,
\item $\log \ch (E)=\log r+\frac {c_1(E)}{r}$.
\end{enumerate}
\end{Lemma}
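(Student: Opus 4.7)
The plan is to prove $(2)\Leftrightarrow(3)$ directly from the definition of $\Delta_i(E)$, and $(1)\Leftrightarrow(3)$ via the splitting principle in rational cohomology.

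For $(2)\Leftrightarrow(3)$, I would expand both sides of the defining identity
$$\log\ch(E)\;=\;\log r+\sum_{i\ge 1}(-1)^{i+1}\frac{1}{i!\,r^i}\,\Delta_i(E)$$
in graded components. A short computation with the Taylor expansion of $\log(1+x)$ identifies the degree-$2$ part of $\log\ch(E)$ with $\ch_1(E)/r=c_1(E)/r$, so comparing with the right-hand side forces $\Delta_1(E)=c_1(E)$. For each $i\ge 2$, the degree-$2i$ part of $\log\ch(E)$ is, up to a nonzero rational scalar, exactly $\Delta_i(E)$. Condition $(3)$ is the assertion that every graded component of $\log\ch(E)$ of degree $\ge 4$ vanishes, which is equivalent to $\Delta_i(E)=0$ for all $i\ge 2$.

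For $(1)\Leftrightarrow(3)$, I would first rewrite $(1)$ as the total Chern class identity $c(E)=(1+c_1(E)/r)^r$, obtained by summing over $i$ and recognising the binomial expansion. Working on the flag bundle so that the Chern roots $\alpha_1,\ldots,\alpha_r$ of $E$ are defined in rational cohomology, this is the same as the formal statement $\alpha_1=\cdots=\alpha_r=c_1(E)/r$. From this,
$$\ch(E)=\sum_{j=1}^r\exp(\alpha_j)=r\exp\!\left(\frac{c_1(E)}{r}\right),$$
and taking logarithms yields $(3)$. Conversely, $(3)$ gives $\ch(E)=r\exp(c_1(E)/r)$, i.e.\ the power sums of the Chern roots satisfy $\sum_j\alpha_j^i=r(c_1(E)/r)^i$ for every $i\ge 1$; by Newton's identities the elementary symmetric functions $c_i(E)=e_i(\alpha_1,\ldots,\alpha_r)$ are determined by these power sums and must equal $\binom{r}{i}(c_1(E)/r)^i$, which is $(1)$.

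There is no genuine obstacle here: the equivalences are formal identities between symmetric functions in $H^*_{\et}(X,\QQ_l)$ (or $H^*(X,\QQ)$). The only point requiring care is that the passages back and forth between power sums and elementary symmetric functions, and the appearance of the factors $1/r^i$ in the definition of $\Delta_i(E)$, involve denominators, which is why the statement must be interpreted in rational coefficients.
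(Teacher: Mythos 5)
Your proposal is correct in substance but follows a genuinely different route from the paper for the equivalence of (1) with (2)--(3). The paper reduces to the case $c_1=0$ via the Bloch--Gieseker covering trick: it passes to a finite flat cover $f:\tilde X\to X$ carrying a line bundle $L$ with $L^{\otimes r}=f^*(\det E)^{-1}$, twists to kill $c_1$, uses injectivity of $f^*$ on $\QQ_l$-cohomology, and then runs an explicit induction through the tensor-product formula for Chern classes to untwist. Your argument instead stays on $X$ and treats everything as an identity of symmetric functions in the Chern roots, which is more elementary and avoids both the covering construction and the induction; the paper's route has the mild advantage of not needing the splitting principle for étale cohomology, but yours is shorter and more transparent. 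One phrase needs repair: condition (1), i.e.\ $c(E)=(1+c_1(E)/r)^r$, is \emph{not} literally equivalent to $\alpha_1=\cdots=\alpha_r=c_1(E)/r$ on the flag bundle --- equality of elementary symmetric functions does not force equality of the roots (already for rank $2$ with $c_1=c_2=0$ one has $\alpha_1=-\alpha_2$ with $\alpha_1$ typically nonzero). What is true, and what you actually use, is that every \emph{symmetric} polynomial in the $\alpha_j$ then agrees with its value at $r$ copies of $c_1(E)/r$; since $\ch(E)=\sum_j\exp(\alpha_j)$ is symmetric, the conclusion $\ch(E)=r\exp(c_1(E)/r)$ follows, and your Newton's-identities argument for the converse is exactly the same principle run backwards. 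With that sentence rephrased, the proof is complete; as in the paper, for non-locally-free $E$ one should interpret the Chern-root manipulations for the class of $E$ in $K^0(X)$.
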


\begin{proof}
Equivalence of 2 and 3 is clear as $\Delta_1(E)=c_1(E)$.
For simplicity of notation let us assume that $E$ is locally free. Proof in the 
general case is the same except that we need to replace $E$ by its class in $K^0(X)$
and do all the computations in Grothendieck's $K$-group.

By the Bloch--Gieseker covering trick (see \cite[Lemma 2.1]{BG})
there exists a smooth projective
variety $\tilde X$ and a finite flat surjective covering $f: \tilde
X\to X$ together with a line bundle $L$ such that $f^*(\det
E)^{-1}=L^{\otimes r}$. Let us set $\tilde E:= f^*E \otimes L.$ 
Note that $c_1(\tilde E)=0$ and
$$\Delta _i (\tilde E)=\Delta _i(f^* E)$$
for all $i\ge 2$.

Since  $f$ induces an injection $H^{*}_{\et}(X, \QQ _l)\to H^{*}_{\et}(\tilde X,
\QQ _l)$, the second condition is equivalent to the vanishing of $\Delta _i (\tilde E)$ for all $i\ge 1$,
i.e., to the equality $\log \ch (\tilde E)=\log r$. Clearly, this is equivalent to 
$c_i(\tilde E)=0$ for all $i\ge 1$.

For all $i\ge 0$ we have
$$c_i(\tilde E)=c_i(f^*E\otimes L)=\sum_{j=0}^{i}\binom{r-j}{i-j} c_1(L)^{i-j}c_j(f^*E).$$ 
Using $c_1(L)=-\frac {1}{r}c_1(f^*E)$ and the fact that the map
$H^{*}_{\et}(X, \QQ _l)\to H^{*}_{\et}(\tilde X, \QQ _l)$ is injective, we see that the second condition is equivalent to
the equalities
\begin{equation}\label{equ}
\sum_{j=0}^{i}\binom{r-j}{i-j} (-c_1(E))^{i-j}r^jc_i(E)=0
\end{equation}
for all $i=1,...,r$.
This follows from the fact that the equalities (\ref{equ}) for $i\le m$ 
are equivalent to the equalities 
\begin{equation}\label{equ2}
r^ic_i(E)=\binom{r}{i}c_1(E)^i
\end{equation}
for $i=1,...,m$. We prove this by induction on $m$. For $m=1$ it is clear, so let us assume 
it holds for $1,..., m-1$. We can assume that (\ref{equ2}) holds for $i<m$.
Then we have
\begin{eqnarray*}
\sum_{i=0}^{m}\binom{r-i}{m-i} (-c_1(E))^{m-i}r^ic_i(E)=r^mc_m(E)-\binom{r}{m}c_1(E)^m\\
+
\sum_{i=0}^{m}(-1)^{m-i} \binom{r-i}{m-i} \binom{r}{i} c_1(E)^m=
r^mc_m(E)-\binom{r}{m}c_1(E)^m\\
+ \binom{r}{m} c_1(E)^m\cdot \sum_{i=0}^{m}(-1)^{m-i} \binom{m}{i}=r^mc_m(E)-\binom{r}{m}c_1(E)^m.
\end{eqnarray*}
This proves that under our assumptions,
(\ref{equ}) for $i=m$ is equivalent to (\ref{equ2}) for $i=m$.
\end{proof}

\subsection{Criterion for local freeness and restriction to divisors} \label{criterion}

Let $X$ be an integral noetherian scheme and let $E$ be a coherent sheaf of $\cO_X$-modules.
Let $S(E)$ be the set of points $x\in X$ such that $E_x$ is not a free $\cO_{X,x}$-module. We call $S(E)$
the \emph{singular set} of $E$.

Let us define the function $\varphi: X\to \ZZ$ by $\varphi (x)= \dim _{k(x)}(E\otimes k(x))$.
Let $\eta$ be the generic point of $X$. For a point $x\in X$,
by \cite[Chapter II, Lemma 8.9]{Ha}, $E_x$ is a free $\cO_{X,x}$-module if and only if $\varphi (x)=\varphi (\eta)$. 
On the other hand, by Nakayama's lemma the function $\varphi$ is upper semicontinuous 
(see \cite[Chapter III, Example 12.7.2]{Ha}), so $S(E)=\{ x\in X: \varphi (x)> \varphi (\eta) \}$ is closed.

In the following we say that $E$ \emph{is locally free outside a finite number of points} if $S(E)$ 
is a finite set of points. 

\medskip

Now let $X$ be a smooth projective variety  of dimension $n$
defined over an arbitrary algebraically closed field $k$.  
In the following we will use several times the following criterion for local freeness of graded 
sheaves associated to filtrations.

\begin{Lemma}\label{loc-free-passing-to-graded}
  Let us assume that $n\ge 3$ and let $V$ be a reflexive sheaf on
  $X$ with a filtration $N^m=0\subset N^{m-1}\subset ...\subset N^0=V$ such that each $N^i$ is saturated in $V$.
Let $W=\bigoplus N^i/N^{i-1}$ be  the associated graded and let us assume that 
\begin{enumerate}
\item the reflexivization $W^{**}$ of $W$ is locally free, and
\item $W$ is locally free outside a finite number of points. 
\end{enumerate}
Then both $V$ and $W$ are locally free.
\end{Lemma}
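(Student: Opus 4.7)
The plan is to prove, by descending induction on $j$, that each $N^j$ and each graded piece $W_j := N^j/N^{j+1}$ is locally free; this yields the claim since $V = N^0$ and $W = \bigoplus W_j$. The argument hinges on Serre's criterion that a coherent sheaf on a smooth variety is reflexive iff it is torsion free and satisfies condition $S_2$, together with the standard depth lemma applied to the short exact sequences $0 \to N^{j+1} \to N^j \to W_j \to 0$.

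First I would observe that since each $N^i$ is saturated in the reflexive sheaf $V$, an application of the depth lemma to $0 \to N^i \to V \to V/N^i \to 0$ (using torsion-freeness of $V/N^i$) shows that every $N^i$ is itself reflexive. The base case $j = m - 1$ then follows immediately: $W_{m-1} = N^{m-1}$ is reflexive, and the summand $W_{m-1}^{**}$ of the locally free sheaf $W^{**} = \bigoplus_i W_i^{**}$ is locally free, so $W_{m-1} = W_{m-1}^{**}$ is locally free.

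For the inductive step, assume $N^{j+1}$ is locally free. At every non-closed point of $X$ the sheaf $W_j$ is a summand of $W$, hence locally free by hypothesis, so it automatically satisfies $S_2$ there. At a closed point $x$, the depth lemma applied to $0 \to N^{j+1} \to N^j \to W_j \to 0$ gives $\mathrm{depth}_x W_j \geq \min(\mathrm{depth}_x N^{j+1} - 1, \mathrm{depth}_x N^j) \geq \min(n - 1, 2) = 2$, using the inductive hypothesis, reflexivity of $N^j$, and crucially the assumption $n \geq 3$. Thus $W_j$ is $S_2$ on the smooth variety $X$, hence reflexive, so $W_j = W_j^{**}$ is locally free as a summand of $W^{**}$; then $N^j$ is an extension of locally free sheaves, hence locally free.

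The main obstacle is the depth estimate at the finitely many closed points where $W$ fails to be locally free. This is precisely where the hypothesis $n \geq 3$ is essential — without it, $\min(n-1, 2)$ would drop below $2$ — and where the two assumptions on $W$ combine in an essential way: local freeness of $W^{**}$ supplies the target to compare $W_j$ with once reflexivity is verified, while local freeness of $W$ off a finite set reduces the $S_2$-check to precisely that finite closed set.
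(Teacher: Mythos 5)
Your proof is correct, but the decisive step is carried out by a genuinely different mechanism than in the paper. Both arguments share the same starting point (each $N^i$, being saturated in the reflexive sheaf $V$, is reflexive; in particular the bottom graded piece equals its double dual, a direct summand of the locally free $W^{**}$, hence is locally free) and both are inductions on the length of the filtration. Where they diverge is in proving that the next graded piece is already reflexive. The paper forms $T=\coker\bigl(N^0/N^1\to (N^0/N^1)^{**}\bigr)$, supported on finitely many points, shows $\Ext^2(T,N^1)=0$ by Serre duality using $n\ge 3$, lifts the extension class of $V$ to an extension $\tilde V$ of $(N^0/N^1)^{**}$ by $N^1$, and deduces $V\simeq\tilde V$ and hence $T=0$ because the map $V\to\tilde V$ from a reflexive sheaf to a locally free one is an isomorphism off the support of $T$. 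You instead apply the depth lemma to $0\to N^{j+1}\to N^j\to W_j\to 0$ at the finitely many bad closed points to get $\mathop{\rm depth}\nolimits_x W_j\ge \min(n-1,2)=2$, and then use Serre's criterion (torsion free plus $S_2$ implies reflexive on a smooth variety) to conclude $W_j=W_j^{**}$. The hypothesis $n\ge 3$ enters both arguments in parallel ways: for the paper it forces $\Ext^2(T,N^1)$, dual to $H^{n-2}$ of a sheaf with zero-dimensional support, to vanish; for you it keeps $\min(n-1,2)$ equal to $2$. Your route is more local and slightly more elementary, avoiding the construction of the auxiliary extension $\tilde V$; the paper's route makes the obstruction sheaf $T$ explicit. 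If you write yours up, do record the two facts you use implicitly: that the non-locally-free locus of $W$, being closed and finite, consists of closed points (so the depth estimate is only needed there), and the exact form of the reflexivity criterion on a smooth variety.
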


\begin{proof}
  It is sufficient to prove the lemma for $m=2$. The general case
  follows easily from this special one by induction on the length $m$ of the
  filtration.

Assuming $m=2$ our assumptions imply that $N^1$ is locally free and we have a short exact sequence
$$0\to N^0/N^1\to (N^0/N^1)^{**}\to T\to 0$$
for some sheaf $T$ supported on a finite number of points. By
assumption we also know that $(N^0/N^1)^{**}$ is locally free.  Let us
note that by Serre's duality $\Ext ^2(T, N^1)$ is dual to $\Ext ^{n-2}
(N^1, T\otimes \omega_X)= H^{n-2}(T\otimes \omega_X\otimes (N^1)^*)=0$
as $n\ge 3$.  Hence by the long $\Ext$ exact sequence, the canonical
map $\Ext ^1((N^0/N^1)^{**}, N^1)\to \Ext ^1(N^0/N^1, N^1)$ is
surjective. Therefore there exists a coherent sheaf $\tilde V$ such that the
following diagram is commutative:
$$\xymatrix{
  0\ar[r]&N^1\ar@{=}[d]\ar[r]& V\ar[d]\ar[r] &N^0/N^1 \ar[d]\ar[r]& 0 \\
  0\ar[r]&N^1\ar[r]& \tilde V\ar[r] & (N^0/N^1)^{**} \ar[r]& 0. \\
}$$ But since $V$ is reflexive and $\tilde V$ is locally free, the map
$V\to \tilde V$ is an isomorphism (as it is an isomorphism outside of
the support of $T$).  Hence $T=0$ and $W^{**}=W$. This immediately implies 
the required assertion.
\end{proof}

\medskip 

We will also need the following lemmas allowing us to keep track of singularities of sheaves when restricting to divisors.

\begin{Lemma}\label{HL-Corollary}
Let $\Lambda$ be a base point free linear system on  $X$  and let $E$ be a coherent  $\cO_X$-module.
\begin{enumerate}
\item If $E$ is reflexive and $Y\in \Lambda$ is integral then $E_Y$ is a torsion free $\cO_Y$-module.
\item If  $E$ is torsion free (reflexive) and $Y\in \Lambda$ is   general 
 then $E_Y$ is also torsion free (reflexive, respectively) as an $\cO_Y$-module.
\end{enumerate}
\end{Lemma}

The above lemma follows from \cite[Lemma 1.1.12 and Corollary 1.1.14]{HL}.

\begin{Lemma}\label{loc-free-0} Let $E$ be a rank $r$ torsion free sheaf on $X$ and let $Y$ be an integral divisor on $X$
	such that $E_Y$ is locally free. Then $S(E)\cap Y=\emptyset$, i.e., $E$ is locally free at all points of $Y$.
	Moreover, if $Y$ is ample then $E$ is locally free outside a finite number of points.
\end{Lemma}

\begin{proof}
	Since every torsion free sheaf on a smooth curve is locally free, we can assume that the dimension $n$ of $X$ is greater than $1$. Since $S(E)$ has codimension $\ge 2$ in $X$, there exists a codimension $1$ point $y\in Y-S(E)$. 
	Let $\eta$ be the generic point of $X$ and $\eta'$ the generic point of $Y$. Since $E_Y$ is locally free, $E_{Y,y}$ is a free $\cO_{Y,y}$-module and hence	
	$$\dim _{k(y)} E_{Y, y}\otimes _{\cO_{Y, y}}k(y) = \dim _{k(\eta ')} E_{Y, y}\otimes _{\cO_{Y, y}}k(\eta ')=\rk E_Y.$$
	By the choice of $y$ the $\cO_{X, y}$-module  $E_{y}$ is  free 	and hence 
	$$\dim _{k(y)} E_y\otimes _{\cO_{X, y}}k(y) = \dim _{k(\eta)} E_y \otimes _{\cO_{X, y}}k(\eta )=r.$$
	Since $E_y\otimes _{\cO_{X, y}}k(y)\simeq E_{Y, y}\otimes _{\cO_{Y, y}}k(y)$ we see that  $E_Y$ has rank $r$. 
	By assumption for any point $z\in Y$ the $\cO_{Y,z}$-module  $E_{Y,z}$ is free, so we get
	$$\dim _{k(z)} E_z\otimes _{\cO_{X, z}}k(z) =\dim _{k(z)} E_{Y, z} \otimes _{\cO_{Y, z}}k(z) = 
	\dim _{k(\eta ')} E_{Y, z}\otimes _{\cO_{Y, z}}k(\eta ')= r.$$
	Then \cite[Chapter II, Lemma 8.9]{Ha} implies that $E_z$ is a free $\cO_{X, z}$-module, which proves the first assertion.
	
	Now let us assume that $Y$ is ample. 
	The singular set $S(E)$ is a closed subset of $X$ and  $S(E)\cap Y=\emptyset$, so it does not have any 
	irreducible components of dimension $\ge 1$. So  $S(E)$ is zero-dimensional.
\end{proof}

\section{Local freeness}

In this section we fix the following notation.  Let $X$ be a smooth
projective variety of dimension $n\ge 2$ defined over an algebraically
closed field $k$ of characteristic $p$ and let $D$ be a simple normal
crossing divisor on $X$.  We assume that $D\subset X$ admits a lifting
to $W_2(k)$.  We also fix an ample divisor $H$ on $X$.

\medskip

The main aim of this section is to prove the following generalization of Theorem \ref{weak-log-freeness}:

\begin{Theorem}\label{strong-log-freeness}
  Let $E$ be a rank $r$ reflexive sheaf with $\Delta (E)H^{n-2}=0$.
  Assume that $E$ has a filtration $M_{\bullet}$ such that all factors
  of the filtration are torsion free of rank $\le p$ with $\mu
  _{H}(\Gr _j^ME)=\mu _{H}(E)$.  Let us also assume that each factor
  has a structure of a slope $H$-semistable logarithmic Higgs sheaf
  (or of a slope $H$-semistable sheaf with an integrable logarithmic connection) on $(X, D)$.
  Then $E$ is locally free and
  $$c_m(E)=\binom{r}{m} \left( \frac{c_1(E)}{r}\right) ^m$$ 
  in $H^{2m}_{\et}(X, \QQ _l)$ for all $m\ge 1$ and any $l\ne
  p$. Moreover, every quotient $\Gr _j^ME$ is locally free and for all
  $m\ge 1$ we have
 $$c_m(\Gr _j^ME)=\binom{r_j}{m}\left( \frac{c_1(E)}{r}\right) ^m$$ 
in $H^{2m}_{\et}(X, \QQ_l)$, where  $r_j=\rk \Gr _j^ME$.
\end{Theorem}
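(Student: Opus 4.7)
The plan is to first use the filtration identity for the second discriminant, together with Bogomolov's inequality and the Hodge index theorem, to show that each graded piece $\Gr_j^M E$ inherits the same two hypotheses as $E$; then to establish local freeness and the Chern-class formula on each $\Gr_j^M E$ (after a Bloch--Gieseker reduction to the case of vanishing first Chern class); and finally to reassemble the conclusions for $E$ via Lemma \ref{loc-free-passing-to-graded}.

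The first step is to extract, from $\Delta(E) H^{n-2} = 0$, that for every $j$ one has $\Delta(\Gr_j^M E) H^{n-2} = 0$ and $c_1(\Gr_j^M E)/r_j \equiv c_1(E)/r$ numerically. For a short exact sequence $0 \to F' \to F \to F'' \to 0$ of torsion-free sheaves of ranks $r', r'', r = r'+r''$ with $\mu_H(F') = \mu_H(F'')$, a standard Chern-class computation yields
$$
\frac{\Delta(F)}{r} = \frac{\Delta(F')}{r'} + \frac{\Delta(F'')}{r''} - \frac{r'r''}{r}\left(\frac{c_1(F')}{r'} - \frac{c_1(F'')}{r''}\right)^{\!2}.
$$
Setting $\xi := c_1(F')/r' - c_1(F'')/r''$, the equal-slope hypothesis gives $\xi \cdot H^{n-1} = 0$, so by the Hodge index theorem $-\xi^2 H^{n-2} \ge 0$, with equality iff $\xi$ is numerically trivial. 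Bogomolov's inequality (Theorem \ref{log-Bogomolov-with-lifting}) gives $\Delta(F') H^{n-2}, \Delta(F'') H^{n-2} \ge 0$. Intersecting the identity with $H^{n-2}$ and using $\Delta(F) H^{n-2} = 0$ forces every nonnegative term to vanish; iterating along $M_\bullet$ proves the claim.

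Next I would reduce each factor to the case $c_1 = 0$ by a Bloch--Gieseker cover as in the proof of Lemma \ref{equivalence-Delta}: pick a finite flat morphism $f: \tilde X \to X$ and a line bundle $L$ on $\tilde X$ with $f^*(\det E) \cong L^{-r}$, arranged so that $(\tilde X, f^{-1}(D))$ is a smooth log pair lifting to $W_2$. Replace each $\Gr_j^M E$ by $\tilde F_j := f^* \Gr_j^M E \otimes L$, which inherits a slope $f^*H$-semistable logarithmic Higgs or connection structure and now satisfies $c_1(\tilde F_j) \equiv 0$ and $\Delta(\tilde F_j)(f^* H)^{n-2} = 0$. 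The heart of the proof is to show that any such sheaf is locally free with $c_m(\tilde F_j) = 0$ in $H^{2m}_{\et}(\tilde X, \QQ_l)$ for every $m \ge 1$. I would obtain this by combining the restriction theorem (Theorem \ref{Bogomolov-restriction-for-Higgs}) to pass to a general complete-intersection surface $S \subset \tilde X$ on which semistability and $\Delta = 0$ persist; the surface case, which one proves using the Higgs--de Rham iteration of Theorem \ref{Higgs-de-Rham} together with the boundedness of Theorem \ref{boundedness} to force periodicity of the sequence and hence vanishing of Chern classes in \'etale cohomology; and Lefschetz-type injectivity together with a dimension induction to propagate the vanishing back to $\tilde X$. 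Descent along $f$, via injectivity of $f^*$ on rational \'etale cohomology, returns the analogous conclusion on $X$.

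Finally, once each $\Gr_j^M E$ is locally free with the stated Chern classes, Lemma \ref{loc-free-passing-to-graded} yields local freeness of $E$ when $n \ge 3$; for $n = 2$ the sheaf $E$ is automatically locally free as a reflexive sheaf on a smooth surface. The Chern-class formula for $E$ then follows by additivity of the Chern character, $\ch(E) = \sum_j \ch(\Gr_j^M E) = r \exp(c_1(E)/r)$, combined with Lemma \ref{equivalence-Delta}. The main obstacle will be the surface case together with the propagation of Chern-class vanishing from a general surface section back to $\tilde X$; everything else is either a direct application of results already stated in the section or a routine Chern-class computation.
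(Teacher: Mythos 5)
Your first step (the filtration identity for $\Delta$, Bogomolov's inequality and the Hodge index theorem forcing $\Delta(\Gr_j^ME)H^{n-2}=0$ and $c_1(\Gr_j^ME)=\tfrac{r_j}{r}c_1(E)$) is exactly what the paper does, except that you should note you need this last equality in $H^2_{\et}(X,\QQ_l)$ and not merely numerically; the same Hodge-index fact ($MH^{n-1}=M^2H^{n-2}=0\Rightarrow M=0$ in $H^2_{\et}$) gives that. The final reassembly via Lemma \ref{loc-free-passing-to-graded} also matches the paper, modulo the need to distinguish $\Gr_j^ME$ from its reflexivization (the factors are only torsion free, so Theorem \ref{log-freeness} applies to $(\Gr_j^ME)^{**}$, and ``locally free outside finitely many points'' has to be supplied separately by restriction and induction on dimension).

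The genuine gap is the Bloch--Gieseker reduction to $c_1=0$. This step is not available in positive characteristic, and the paper explicitly warns against it (see the discussion preceding Theorem \ref{0-mu}): first, the covering $f:\tilde X\to X$ need not preserve liftability of the log pair to $W_2(k)$, and every tool you invoke afterwards --- Bogomolov's inequality (Theorem \ref{log-Bogomolov-with-lifting}), the restriction theorem (Theorem \ref{Bogomolov-restriction-for-Higgs}), and the Higgs--de Rham flow (Theorem \ref{Higgs-de-Rham}) --- requires a $W_2(k)$-lifting of $(\tilde X, f^{-1}(D))$, which you cannot ``arrange''; second, since the ranks are allowed to equal $p$, the cyclic cover extracting an $r$-th root of $\det E$ may be inseparable, in which case pullback does not preserve slope semistability. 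This is precisely why the paper's proof of Theorem \ref{log-freeness} never normalizes $c_1$ to zero: it runs the Higgs--de Rham iteration on $X$ itself and obtains boundedness by twisting the $m$-th term by $\det E^{-s_m}$, where $p^m=rs_m+q_m$ with $0\le q_m<r$, so that only finitely many values of $c_1$ occur. Your architecture would be correct in characteristic zero (it is essentially the paper's proof of Theorem \ref{0-mu}), but as a proof of the characteristic-$p$ statement the covering step would fail. A smaller inaccuracy: the surface case is not ``the heart'' requiring the Higgs--de Rham iteration --- on a surface $\Delta(E)=0$ and reflexivity already give local freeness; the iteration-plus-boundedness argument is needed in the induction step to kill the top discriminant $\Delta_n(E)$, which cannot be reached by restricting to hyperplanes.
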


This theorem is a strong version of the following theorem to which we will reduce its proof.

\begin{Theorem} \label{log-freeness}
  Let $(E, \theta)$  ($(E, \nabla )$) be a rank $r\le p$ slope $H$-semistable
  logarithmic Higgs sheaf (a rank $r\le p$ slope $H$-semistable sheaf with a logarithmic connection, respectively). 
  Then the following conditions are equivalent:
  \begin{enumerate}
  	\item $\Delta (E)H^{n-2}=0$ and $E$ is reflexive,
  	\item $\Delta (E)H^{n-2}=0$ and $E$ is locally free, 
  	\item $c_m(E)=\binom{r}{m} \left( \frac{c_1(E)}{r}\right) ^m$ 
  	in $H^{2m}_{\et}(X, \QQ _l)$ for all $m\ge 1$ and any $l\ne
  	p$.  
  \end{enumerate} 
\end{Theorem}

\begin{Remark}
To simplify notation in Theorems \ref{strong-log-freeness} and \ref{log-freeness} we deal with only one polarization
although one can also replace $H$ by a collection of ample divisors as in, e.g., \cite[Theorem 10]{La2}.
\end{Remark}

Theorem \ref{log-freeness} generalizes \cite[Theorem 11]{La2} to the case of
logarithmic Higgs sheaves with possibly non-trivial Chern classes.  It
also generalizes \cite[Theorems 3.6 and  3.10]{SYZ}, which deal with systems of
Hodge sheaves of rank $r<p$ on $X$ defined over $k=\bar \FF_p$. In
this last case Theorem \ref{log-freeness} allows to compute higher Chern
classes of twisted preperiodic Higgs bundles.  Let us also remark that
a special case of the above result was implicitly used in proof of
\cite[Theorem 3]{Ar} (see Remark \ref{Arapura}).

The strategy of our proof of Theorem \ref{log-freeness}  in the case $D=0$ is modelled on 
the proof of \cite[Theorem 11]{La2}. Unfortunately, the proof of \cite[Theorem 11]{La2} contains a serious
gap: it is not clear that the family of Higgs sheaves
$\{(E_i,\theta_i)\}$ considered in the proof is bounded as a priori
the sheaves $E_i$ need not be reflexive.  However, if one assumes that
in \cite[Theorem 11]{La2} all Chern classes vanish, then the arguments
there show that $E$ is locally free. This is already sufficient for almost all the
applications mentioned in \cite{La2} (except for Corollary 6 that also
needs an additional assumption on vanishing Chern classes; one also needs to
slightly adjust the proof of \cite[Corollary 5]{La2}).

In general, one can easily find examples of Higgs--de Rham sequences
starting with a locally free sheaf for which other sheaves in the
sequence are not reflexive. This causes several complications that we
need to overcome. The same error appeared independently in the first version
of \cite[Theorem 3.10]{SYZ}, where the authors claimed existence of a
certain map on the open subset of the moduli space of semistable
sheaves, parameterizing reflexive sheaves. However, in case of
\cite[Theorem 3.10]{SYZ}, it is not so easy to adjust the arguments
adding additional assumptions (this would require at least Lemma
\ref{equivalence-Delta} and repeating the proof of \cite[Theorem
11]{La2}). So Theorem \ref{log-freeness} offers in this case the only 
available proof.

A new idea appearing in the general  proof of Theorem \ref{log-freeness}, when compared 
to the case $D=0$,  is that we need to use a nearby cycles functor to prove local freeness of the restriction of
$E$ to the irreducible components of $D$.

\subsection{Reduction from filtrations to sheaves} \label{reduction}

In this subsection we show how to reduce the proof of Theorem \ref{strong-log-freeness}
to Theorem \ref{log-freeness}. Before we do that let us prove a few  independent lemmas:

\begin{Lemma}\label{JH-factors}
	Let $E$ be a rank $r$ reflexive sheaf with $\Delta (E)H^{n-2}=0$.
	Assume that $E$ has a filtration $M_{\bullet}$ such that all factors
	of the filtration are torsion free of rank $\le p$ with $\mu
	_{H}(\Gr _j^ME)=\mu _{H}(E)$.  Let us also assume that each factor
	has a structure of a slope $H$-semistable logarithmic Higgs sheaf
	(or of a slope $H$-semistable integrable logarithmic connection) on $(X, D)$.
	Then for all $j$ we have $\Delta (\Gr _j^ME) H^{n-2}=0$ and $ c_1(\Gr _j^ME)/ \rk (\Gr _j^ME)= c_1(E)/r$ 
	in $H^{2}_{\et}(X, \QQ _l)$ for any $l\ne p$.
\end{Lemma}

\begin{proof}
	To simplify notation let us set $E_i=\Gr _i^ME$ and $r_i=\rk E_i$. 
	Since $\mu _{H}(E_i)=\mu _{H}(E_j)$,  the Hodge index theorem and Theorem \ref{log-Bogomolov-with-lifting} imply that
	$$\aligned
	0=\frac{\Delta (E)H^{n-2}}{ r}&= \sum \frac{\Delta
		( E_i)H^{n-2}}{ r_i} -\frac{1}{r}\sum_{i<j} r_ir_j \left(
	\frac{c_1 (E_i)}{r_i}-\frac{c_1 (E_j)}{ r_j}\right) ^2
	H^{n-2}\\
	&\ge \sum \frac{\Delta ( E_i)H^{n-2}}{ r_i}\ge 0.\\
	\endaligned$$ Hence ${\Delta (E_i)} H^{n-2}=0$ and 
	$\left(
	\frac{c_1 (E_i)}{r_i}-\frac{c_1 (E_j)}{ r_j}\right) ^2
	H^{n-2}=0$ for all $i$ and $j$. By assumption we also have
	$\left(
	\frac{c_1 (E_i)}{r_i}-\frac{c_1 (E_j)}{ r_j}\right) 
	H^{n-1} = \mu
	_{H}(E_i) - \mu
	_{H}(E_j)=0.$
	
	Now let us recall that by \cite[Theorem 9.6.3]{Kl} if $B$ is a divisor on $X$ such that
	$BH^{n-1}=B^2H^{n-2}=0$, then the class of $B$ in the group of divisors on $X$ modulo algebraic equivalence is torsion.
	 Using the cycle map we obtain equality $B=0$ in $H^{2}_{\et}(X, \QQ _l)$ for any $l\ne p$.
	
	Applying this fact to $B=\left( \frac{c_1 (E_i)}{r_i}-\frac{c_1 (E_j)}{ r_j}\right) $
	we get the required equalities.  
\end{proof}

\medskip

\begin{Lemma}\label{codim-3}
	Let $(E, \theta)$ ( $(E, \nabla)$) be a
	rank $r\le p$ slope $H$-semistable logarithmic Higgs sheaf (sheaf with an integrable logarithmic connection, respectively) with
	$\Delta (E)H^{n-2}=0$.  Then $\theta$ ($\nabla$) extends uniquely to a logarithmic Higgs field 
	$\tilde \theta $ (an integrable logarithmic connection $\tilde \nabla$) on the reflexivization $E^{**}$ so that $(E^{**},
	\tilde \theta) $  ($(E^{**}, \tilde \nabla) $, respectively) is slope $H$-semistable.  
		Moreover, $\Delta (E^{**})H^{n-2}=0$
	and the canonical map $E\to E^{**}$ is an isomorphism outside of a
	closed subset of codimension $\ge 3$.
\end{Lemma}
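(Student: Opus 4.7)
The plan is to first construct the extension $\tilde\theta$ and then use Bogomolov's inequality together with a Chern-class comparison to control the quotient $E^{**}/E$. Since $\Omega_X(\log D)$ is locally free, applying reflexivization to $\theta\colon E\to E\otimes \Omega_X(\log D)$ produces a canonical $\cO_X$-linear map $\tilde\theta\colon E^{**}\to E^{**}\otimes \Omega_X(\log D)$ extending $\theta$. The integrability $\tilde\theta\wedge\tilde\theta=0$ holds on the open subset $U\subset X$ where $E$ is locally free (whose complement has codimension $\ge 2$), and since $E^{**}\otimes \Omega_X^2(\log D)$ is reflexive, this identity propagates from $U$ to all of $X$.

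For slope $H$-semistability of $(E^{**},\tilde\theta)$, I would take a nonzero Higgs subsheaf $F\subset E^{**}$ and set $F':=F\cap E$. Because $\Omega_X(\log D)$ is locally free, intersecting the inclusions $\tilde\theta(F)\subset F\otimes\Omega_X(\log D)$ and $\theta(E)\subset E\otimes\Omega_X(\log D)$ inside $E^{**}\otimes\Omega_X(\log D)$ shows that $F'$ is a Higgs subsheaf of $(E,\theta)$. The quotient $F/F'$ embeds into $E^{**}/E$, which is torsion of codimension $\ge 2$, so $\rk F=\rk F'$ and $c_1(F)=c_1(F')$ as divisor classes; hence $\mu_H(F)=\mu_H(F')\le \mu_H(E)=\mu_H(E^{**})$, yielding semistability.

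Now I would exploit the short exact sequence $0\to E\to E^{**}\to T\to 0$ with $T$ supported in codimension $\ge 2$. Multiplicativity of total Chern classes gives $c_1(E^{**})=c_1(E)$ and $c_2(E^{**})=c_2(E)+c_2(T)$, and a local Koszul computation at a generic codimension-$2$ point of $\operatorname{supp} T$ identifies $c_2(T)=-[Z]$, where $[Z]$ is the effective codimension-$2$ cycle recording the length of $T$ along its codimension-$2$ components. Therefore
$$\Delta(E)H^{n-2}-\Delta(E^{**})H^{n-2}=2r\,[Z]\cdot H^{n-2}\ge 0.$$
Applying Theorem \ref{log-Bogomolov-with-lifting} to the semistable Higgs sheaf $(E^{**},\tilde\theta)$ of rank $r\le p$ gives $\Delta(E^{**})H^{n-2}\ge 0$, so combined with the hypothesis $\Delta(E)H^{n-2}=0$ both quantities must vanish and $[Z]\cdot H^{n-2}=0$. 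Since $H$ is ample and $[Z]$ is effective of codimension $2$, this forces $[Z]=0$, so $T$ has support of codimension $\ge 3$. The one delicate technical point is the Chern-class identity $c_2(T)=-[Z]$ for torsion sheaves with codimension-exactly-$2$ support; this is where the argument is least routine and where I expect the main bookkeeping effort to lie.
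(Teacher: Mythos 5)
Your argument is correct, and its core is the same as the paper's: extend $\theta$ to $E^{**}$, check that $(E^{**},\tilde\theta)$ is slope $H$-semistable (the paper dismisses this as clear; your details --- reflexivizing the map, propagating $\tilde\theta\wedge\tilde\theta=0$ from the locally free locus into the torsion-free target, and testing semistability on $F\cap E$ --- are the standard ones and are fine), and then play Bogomolov's inequality for $(E^{**},\tilde\theta)$ (Theorem \ref{log-Bogomolov-with-lifting}) against the positivity of the correction term $\Delta(E)H^{n-2}-\Delta(E^{**})H^{n-2}$. Where you differ is in how that correction term is evaluated. You do it globally on $X$ via the cycle-theoretic identity $\ch_2(T)=[Z]$ (equivalently $c_2(T)=-[Z]$) for the torsion sheaf $T=E^{**}/E$, which is true but, as you note, is the one non-routine input; proving it properly requires a d\'evissage of $T$ by sheaves of the form $\cO_{Z_i}$-modules rather than just a local Koszul computation at a generic point, since $\ch_2$ is not a local invariant. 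The paper instead restricts the exact sequence $0\to E\to E^{**}\to T\to 0$ to a general complete intersection surface $Y\in|H|\cap\dots\cap|H|$: there $T_Y$ has finite length, so the comparison becomes the elementary surface identity $\Delta(E_Y)=\Delta(E^{**}_Y)+2r\,h^0(Y,T_Y)$ (the paper omits the harmless factor $2r$), and $h^0(Y,T_Y)=0$ for general $Y$ forces $\operatorname{codim}\operatorname{supp}T\ge 3$. The surface-cut argument thus buys you exactly the fact you flag as delicate --- indeed it proves the identity $[Z]\cdot H^{n-2}=\lim$ of lengths on general cuts --- so you could substitute it for your $\ch_2(T)$ step and the two proofs become identical; conversely, your global formulation is slightly more uniform in that it never chooses a surface and records the precise cycle $[Z]$ that must vanish.
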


\begin{proof}
	Equality $\theta \wedge \theta =0$ implies $\theta ^{**} \wedge \theta ^{**} =0$, so  $\tilde \theta:= \theta ^{**}$ is a logarithmic Higgs field. In the second case  we can extend $\nabla$, e.g., in the following way. Let us set $U:= X-S(E)$ and let $j: U\hookrightarrow X$ be the corresponding embedding. Then $E^{**}=j_*(j^*E)$ and we can define 
	$\tilde \nabla$  by $\tilde \nabla = j_* \nabla j^*$. It is easy to see that this is a logarithmic connection. It is integrable, because $\tilde \nabla \wedge \tilde \nabla : E^{**}\to E^{**}\otimes \Omega_X^2 (\log D)$ is an $\cO_X$-linear map extending $\nabla _U\wedge \nabla_ U =0$.

	Note that for any torsion free sheaf $G$ the line bundles $\det (G^{**})$ and $\det (G)$ are isomorphic on $X-S(G)$ and $S(G)$ has codimension $\ge 2$. So $\det (G^{**})\simeq \det (G)$ and $c_1(G^{**})=c_1(G)$. 
	Now for any subsheaf $G\subset E^{**}$ we have $(E\cap G)^{**}=G^{**}$ as both sheaves are reflexive and equal outside of codimension $\ge 2$. So the sheaf $E$ contains subsheaf $E\cap G$ of the same slope as $G$.
	This shows that passing to the reflexivization preserves slope $H$-semistability (and also slope $H$-stability). 		  
	
	To prove the second part note
	that the canonical map $E\to E^{**}$ is injective as by assumption
	$E$ is torsion free.  Let $T$ be the cokernel of this map. Without
	any loss of generality we can assume that $H$ is very ample. After
	restricting to a general complete intersection surface $Y\in
	|H|\cap...\cap |H|$, we get a short exact sequence
	$$0\to E_Y\to (E^{**})_Y\to T_Y\to 0.$$
	Since $T$ is supported in codimension $\ge 2$, $T_Y$ is supported on a
	finite number of points. We have
	$$0=\Delta(E)H^{n-2}=\Delta (E_Y)= \Delta ((E^{**})_Y)+h^0(Y, T_Y) =\Delta(E^{**})H^{n-2}+h^0(Y, T_Y).$$
	Since $(E^{**}, \theta ^{**})$ is slope $H$-semistable, by
	Bogomolov's inequality for logarithmic Higgs bundles (see Theorem
	\ref{log-Bogomolov-with-lifting} and Remark \ref{Bogomolov-inequality-for-connections}) we have $\Delta(E^{**})H^{n-2}\ge
	0$. Hence we get $\Delta(E^{**})H^{n-2}=0$ and $h^0(Y, T_Y)=0$. Since
	$T_Y$ is supported on a finite number of points, we get $T_Y=0$.  It
	follows that $T$ is supported in codimension $\ge 3$.
\end{proof}

\medskip

\begin{Lemma}\label{lifting}
	Replacing $H$ by some its multiple we can assume that any $Y\in |H|$ is liftable to 
	$\tilde Y\subset \tilde X$. Moreover, for any closed point  $x\in U:=X-\Supp D$ 
	a general divisor $Y\in |H|$ passing through $x$ is smooth and the divisor $D+Y$ is a normal crossing divisor.
	Then  $D_Y=D\cap Y$ is a normal crossing divisor on $Y$ and the pair $(Y, D_Y)$ is liftable to $W_2(k)$.  
\end{Lemma}

\begin{proof}
	The first part follows from the proof of \cite[Theorem 11]{La2}. Replacing $H$ by its multiple we can also assume that $H$ is very ample. As in the proof of \cite[Theorem 3.1]{DH} we can also assume that the subsystem $\Lambda\subset |H|$ consisting of all divisors containing $x$ has $x$ as its scheme-theoretic base locus. If $\pi: X'\to X$ is the blow up of $x$ then, replacing $H$ if necessary by its multiple, we can also assume that $\pi^*H-E$ is very ample (see \cite[Chapter II, Proposition 7.10]{Ha}).
Let $\{D_i\}_{i\in I}$ be the irreducible components of $D$  viewed as reduced closed subschemes of $X$. Let us set $D_i'=\pi^{-1} (D_i)$ and $D'=\pi^{-1} (D)$. By Bertini's theorem for any $J\subset I$, general $Y'\in |\pi^*H-E|$ intersects all irreducible components of $\bigcap_{j\in J}D_j'$ along smooth divisors. Then  $D'+Y'$ is a normal crossing divisor on $X'$. By \cite[Theorem 2.1]{DH} the image of a general divisor $Y'\in |\pi^*H-E|$ is smooth and it is a general divisor in $\Lambda$.
Hence for general $Y\in \Lambda$, $D+Y$ is a normal crossing divisor on $X$. Moreover, $\tilde D+\tilde Y\subset \tilde X$ is its lifting to $W_2(k)$. This implies that also $(\tilde Y, \tilde Y\cap \tilde D)$ lifts $(Y, D_Y)$ to $W_2(k)$.
\end{proof}

\medskip

\begin{Lemma}\label{implication}
	Theorem \ref{log-freeness} in dimension $\le n$ implies Theorem \ref{strong-log-freeness}
	in dimension $\le n$.
\end{Lemma}

\begin{proof}
	The proof is by induction on the dimension $n$ of $X$. If $n=1$ then
	the assertion follows from the fact that torsion free sheaves on a
	smooth curve are locally free. Assume that the implication holds for
	varieties of dimension less than $n$ and let $X$ be of dimension $n$.
	
	First we consider the case in which  each factor of the filtration from Theorem \ref{strong-log-freeness}
	 has a structure of a slope $H$-semistable logarithmic Higgs sheaf. Let us write $E_j$ for $\Gr _j^ME$ and $r_j$ for its rank.   Replacing $H$ by its multiple we can assume that $T_X(-\log \, D)\otimes \cO_X(d_0H)$ is globally generated. Moreover, by Lemma \ref{lifting} we can assume that a general divisor $Y\in |H|$  the pair $(Y, D_Y=D\cap	 Y)$ is log smooth and  liftable to $W_2(k)$. 
	 By Corollary \ref{Bogomolov-restriction-for-Higgs-ss}  applied to each	quotient of the filtration $M_{\bullet}$, for large $d$ and for a general section $Y\in |dH|$, the restriction of each quotient $E_j:=\Gr _j^ME$ to $Y$ is a
	slope $H_Y$-semistable logarithmic Higgs sheaf and  the restriction $E_Y$ is reflexive (here we use
	Lemma \ref{HL-Corollary}).  
	Hence by the induction assumption each $(E_j)_Y$ is locally free.  So by Lemma \ref{loc-free-0}
	each  $E_j$ is locally free outside a finite number of points of $X$.
	
	Since by Lemma \ref{JH-factors} we have ${\Delta (E_j)} H^{n-2}=0$,  Theorem \ref{log-freeness} applied to $X$ 
	implies that all $E_j^{**}$ are locally free. Hence the assumptions of Lemma
	\ref{loc-free-passing-to-graded} are satisfied and we conclude that
	$E$ and  all quotients $E_j$ are locally free.  By Theorem \ref{log-freeness}
	this implies that
	$$c_m(E_j)=\binom{r_j}{m}\left( \frac{c_1(E _j)}{r_j}\right) ^m,$$ 
	which with equality  $c_1(E_j)=\frac{r_j}{r}c_1(E)$  finishes the proof
	of the second part of  Theorem \ref{strong-log-freeness}. Now a simple computation of Chern classes shows that
	we also have 
	$$c_m(E)=\binom{r}{m} \left( \frac{c_1(E)}{r}\right) ^m.$$ 	

\medskip
	
		Now let us consider the case in which  each factor of the filtration $M_{\bullet}$ from Theorem \ref{strong-log-freeness} has a structure of a slope $H$-semistable sheaf with an integrable logarithmic connection.
		The same arguments as above allow us to prove that for general $Y$ as above the restriction 
		$(\Gr ^M E)_Y$ is locally free  (here we use Remark \ref{Bogomolov-restriction-for-connections-remark} instead of  Corollary \ref{Bogomolov-restriction-for-Higgs-ss}). So	$\Gr ^M E $ is locally free  outside a finite number of points and  by Lemma \ref{loc-free-passing-to-graded} it is sufficient to prove that $(\Gr ^M  E)^{**}$ is locally free.
Then $\Gr ^M E $ is locally free and we can finish as in the case of logarithmic Higgs sheaves.

Let us set  $E_j=\Gr _j^ME$ and  $r_j=\rk E_j$.  For general $Y$  the restriction $(M_i)_Y$ is a subsheaf of $E_Y$,
so $(M_{\bullet}) _Y$ is a filtration of $E_Y$. 
 If $n>2$ then  as above we have $\Delta ((E_j)_Y) H^{n-3}=0$ and hence by the induction assumption applied to $E_Y$ we have	for all $m\ge 1$
		  $$c_m((E_j)_Y)=\binom{r_j}{m}\left( \frac{c_1(E_Y)}{r}\right) ^m.$$
For $n=2$ such equalities are clear as we need to check them only for $m=1$.
		
Let us recall that  by Lemma \ref{JH-factors} we have  $\Delta (E_j) H^{n-2}=0$. 
So by Lemma \ref{codim-3} $\bar E_j:=(E_j)^{**}$ is a slope $H$-semistable sheaf with an integrable logarithmic connection and we have $\Delta (\bar E _j) H^{n-2}=0$.	Theorem \ref{existence-of-gr-ss-Griffiths-transverse-filtration} allows us to construct a filtration $S^{j, \bullet}$ of  $\bar E _j$ such that the associated graded $\Gr _{S^j} \bar E_j $ is a slope $H$-semistable Higgs sheaf with $\Delta (\Gr _{S^j} \bar E _j) H^{n-2}=0$.  Again using Lemma \ref{codim-3}, we see that 
 $(\Gr _{S^j} \bar  E _j)^{**}$ satisfies condition (1) of  Theorem \ref{strong-log-freeness} and hence it is locally free.

Note that for general $Y$ as above we have $(S^i_j)_Y\subset (\bar E_j)_Y$ and  $(\Gr _{S_j} \bar E_j)_Y= \Gr _{(S_j)_Y} (\bar E_j)_Y$. Since  $E_j$ is locally free along $Y$ we have $ (\bar E_j)_Y=(E_j)_Y$. Therefore the Chern classes of   $(\Gr _{S_j} \bar E_j)_Y$ satisfy condition (3) of Theorem \ref{log-freeness} and thus $(\Gr _{S_j} \bar E_j)_Y$ is locally free. 
So by Lemma \ref{loc-free-0} the sheaf $\Gr _{S_j} \bar E_j$ is locally free outside a finite number of points. Now we can use Lemma \ref{loc-free-passing-to-graded} to conclude that $\bar E _j$ is locally free. This proves that 
$(\Gr ^M  E)^{**}=\bigoplus _j \bar E_j$ is locally free as required.

\end{proof}

\subsection{Local freeness for  sheaves}

In this subsection we show the proof of Theorem \ref{log-freeness}.

\medskip

\medskip

\begin{proof}[Proof of Theorem \ref{log-freeness}]

We prove the required assertion by induction on the dimension $n$ of
$X$. Let us assume that $n=2$. Then equivalence of (1) and (2) is obvious since every reflexive sheaf 
on a smooth surface is locally free. The fact that (2) implies (3) is also obvious as equality 
in (3) for $m=1$ is trivial and for $m=2$ it is equivalent to $\Delta (E)=0$.
The fact that (3) implies (1) follows from Lemma \ref{codim-3}.

\medskip

Now let us assume that $n\ge 3$ and equivalence of conditions (1), (2) and (3) holds for 
varieties of dimension less than $n$.  Replacing $H$ by its multiple we can assume $T_X(-\log \, D)\otimes \cO_X(d_0H)$ is globally generated and  by Lemma \ref{lifting} we can also assume that a general divisor $Y\in |H|$  the pair $(Y, D_Y=D\cap	 Y)$ is log smooth and  liftable to $W_2(k)$. 

First let us prove that (1) implies (2) and (3). 
Let $(E,\theta)$ ($(E, \nabla)$) be a reflexive rank $r\le p$ slope $H$-semistable
logarithmic Higgs sheaf (sheaf with an integrable logarithmic connection) with $\Delta (E)H^{n-2}=0$.

\begin{Claim} \label{easy-vanishing}
	We have $\Delta_i(E)=0$ for $2\le i<n$.
\end{Claim}

\begin{proof} For large $d$ and 
	for a general hyperplane section $Y\in |dH|$,  by Corollary \ref{Bogomolov-restriction-for-Higgs-ss} 
	(or  Remark \ref{Bogomolov-restriction-for-connections-remark})
	we know that $(E_Y, \theta_Y)$  ($(E_Y, \nabla_Y)$, respectively) is slope $H_Y$-semistable. By
	Lemma \ref{HL-Corollary}
	we also know that the restriction $E_Y$ is reflexive. 
	Since $\Delta (E_Y)H_Y^{n-3}=d\cdot \Delta (E)H^{n-2}=0$, by the induction assumption $E_Y$ is locally free 
	and $c_m(E_Y)=\binom{r}{m} \left( \frac{c_1(E_Y)}{r}\right) ^m$  in $H^{2m}_{\et}(Y, \QQ _l)$ for all $m\ge 1$ and any $l\ne p$.  
	By Lemma \ref{equivalence-Delta} this implies equalities $\Delta_i(E_Y)=0$ for $2\le i<n$. 
	By the Lefschetz hyperplane theorem, the inclusion $Y\hookrightarrow X$ induces injections
	$H^{2i}_{\et}(X, \QQ _l)\to H^{2i}_{\et}(Y, \QQ _l)$ for $i<n$, which proves the claim.
\end{proof}

\begin{Claim} \label{semistable-loc-free}
	If $(E, \theta)$  (or $(E, \nabla)$ ) is slope $H$-stable then $E$ is locally free.
\end{Claim}

\begin{proof}
  Since  $E$ is reflexive for any smooth hypersurface $Y\in |H|$ the restriction $E_Y$ is torsion free (see Lemma \ref{HL-Corollary}).  Then, possibly replacing $H$ with some its multiple, Lemma \ref{lifting} implies that for every closed point $x\in U:=X-\Supp D$  we can find  $Y\in |H|$ passing through $x$ such that the pair $(Y, D_Y=D\cap	 Y)$ is log smooth and  liftable to $W_2(k)$.  We can use  Theorem  \ref{Bogomolov-restriction-for-Higgs} 	
  (or Remark \ref{Bogomolov-restriction-for-connections-remark}) to conclude that 
  for any such $Y$ the restriction $(E_Y, \theta_Y)$  ($(E_Y, \nabla_Y)$, respectively)
  is slope $H_Y$-stable. Since  $\Delta_i(E)=0$ for $2\le i\le n$ we get $\Delta_i(E_Y)=0$ for $2\le i\le \dim
  Y=n-1$. So $E_Y$ satisfies (3) and our induction assumption implies 
  that $E_Y$ is locally free. Then Lemma \ref{loc-free-0} 
  implies that $E$ is locally free at all points of $Y$. 
  This shows that $E$ is locally free on $U$. 
  
  Now let $Y$ be an irreducible component of $D$. To finish the proof  it is sufficient to show that $E_Y$
  is locally free as then $E$ is locally free along $Y$ by Lemma \ref{loc-free-0}. This part requires the results of Subsections 3.1-3.4 (that do not depend on Theorems \ref{strong-log-freeness} and \ref{log-freeness}).
  The proof is similar but more complicated than that of Theorem \ref{nearby-functor}.  As far as possible we will keep the notation from that proof and show  the necessary adjustments.
 
 We construct a certain sequence analogous to the canonical Higgs--de Rham sequence of $(E, \theta)$ ($(E, \nabla)$) 
 in the following way.
  By Theorem \ref{deformation-to-system} there exists a decreasing Griffiths transverse filtration  
  $N^{\bullet }$ of $E$ such that the  associated graded  $(\bar E_0,\bar \theta_0):= \Gr _N (E, \theta)$  is 
  a slope $H$-semistable system of logarithmic Hodge sheaves (in particular, $\bar \theta_0$ is nilpotent). 
  In case of logarithmic connections we use Simpson's filtration $S^{\bullet }$ instead of $N^{\bullet }$.
  Then  
 using Lemma \ref{codim-3} we define  $(E_0,\theta_0)$ as $((\bar E_0)^{**}, {\bar \theta_0}^{**} )$.  Lemma \ref{codim-3}
 implies that $\Delta (E_0) H^{n-2}=0$ and  $(E_0,\theta_0)$ is slope $H$-semistable.  Now we define 
 $(V_{0}, \nabla _0):= C_{(\tilde X,\tilde D)}^{-1} (E_0, \theta _0) $.  
 Let $S_0^{\bullet}$  be (decreasing) Simpson's filtration on $(V _0, \nabla _0) $ and let 
  $(\bar E_1=\Gr_{S_0}(V _0),\bar \theta _1 ) $ be the associated system of Hodge sheaves. Then we set $(E_1,\theta_1):=((\bar E_1)^{**},
  {\bar \theta_1}^{**} )$ and repeat the procedure. In this way we get the following sequence
  	$${  \xymatrix{
  (E, \theta) \ar[rd]&	&& (V_0, \nabla _0)\ar[rd]^{\Gr _{S_0}}&&& (V_1, \nabla _1)\ar[rd]^{\Gr _{S_1}}&\\
  &	(\bar E_0, \bar \theta _0)\ar[r]&	(E_0, \theta _0)\ar[ru]^{C^{-1}}&&	(\bar E_1, \bar \theta _1)\ar[r]&(E_1, \theta_1)\ar[ru]^{C^{-1}}&&...\\
  }}$$
in which  each logarithmic Higgs sheaf  $(E_j, \theta_j)$ is reflexive rank $r\le p$  slope $H$-semistable with  $\Delta (E_j) H^{n-2}=0$. This follows by induction as 
 $\Delta (\bar E_j)H^{n-2}= \Delta (V_j) H^{n-2}= p^2 \Delta (E_{j-1})H^{n-2}=0$ and then Lemma \ref{codim-3} gives 
 $\Delta (E_j) H^{n-2}=0$. In case of logarithmic connections the sequence is the same except that we replace $(E, \theta)$
 by $(E, \nabla)$.
An easy induction shows also that $c_1(E_j)=p^j c_1(E)$ for all $j\ge 0$.

  Now let us write $p^m=rs_m+q_m$ for some non-negative integers $s_m$
  and $0\le q_m<r$. Let us set $(G_m, \theta _{G_m}):=(E_m, 
  \theta _m)\otimes \det E^{-s_m}$.  Then  $\Delta _i(G_m)=\Delta _i(E_m)=0$ for $2\le i <n$ 
 and $c_1(G_m)= q_m c_1(E)$ can take only  finitely many values. So Theorem
  \ref{boundedness} implies that the family of reflexive slope
  $H$-semistable logarithmic Higgs sheaves $\{(G_m, \theta _{G_m})\} _{m\ge 0}$ is
  bounded. It follows that the family of sheaves  $\{(G_m)_Y\} _{m\ge 0}$ is
  also bounded.

  	Let $E'_0$  be an $\LL _Y^0$-submodule of the $\LL_Y^0$-module $((E_0)_Y, \theta _0 |_Y)$.
  	Note that $E_m$ is locally free outside a finite number of points and 	by Lemma \ref{codim-3}  $\bar E_m$ is isomorphic to $E_m$ outside of a closed subset of codimension $\ge 3$. So all $\bar E_m$ are locally free outside of a closed subset of codimension $\ge 3$. In particular, $(\bar E_m)_Y$ is locally free outside of a closed subset of codimension $\ge 2$. 
  	This, similarly as in proof of Theorem \ref{nearby-functor}, allows us to construct an $\LL _Y^0$-submodule $E_1'\subset (( E_1 )_Y, \theta_1 |_Y )$ such that 
  	$\mu_{H_Y}(E'_1)=p \mu_{H_Y}(E')$. More precisely, 	as in the proof of Theorem \ref{nearby-functor}  $E_0'$ induces an $\LL _Y$-submodule $V'_0 $ of $((V_0)_Y, (\nabla_0) |_Y)$.
  	We have a filtration  $\bar S^{\bullet}_{Y}$ of
  	$(V_0)_Y$ defined by $\bar S^j_{Y}:=\im ((S^j_0)_Y\to (V_0)_Y)$. Note that 	
  	$$(\Gr_{\bar S_Y} ((V_0)_Y))^{**}= ((\Gr_{S_0} V_0)_Y)^{**}= ((\bar E_1)_Y)^{**}=((E_1)_Y)^{**}$$
  	as all sheaves are reflexive and isomorphic on the set where $(\bar E_1)_Y$ is locally free, i.e., outside of a closed subset of codimension $\ge 2$ in $Y$. Now $V'_0\subset (V_0)_Y$ has a filtration induced from $\bar S^{\bullet}_{Y}$
 and the reflexivization of the associated graded is a subsheaf of $((E_1)_Y)^{**}$ that after intersecting with $(E_1) _Y$
 gives the required submodule.

 Repeating the above procedure allows us to construct
  	a sequence $\{E'_m\} _{m\ge 0}$ of $\LL_Y^0$-modules such that
  	$E'_m\subset ((E_m)_Y, \theta_m|_Y)$ and $\mu_{H_Y}(E'_m)=p^m
  	\mu_{H_Y}(E')$. Then as in the proof of Theorem \ref{nearby-functor} the  
boundedness of the family $\{(G_m)_Y\} _{m\ge 0}$ implies that the $\LL_Y^0$-module $((E_0)_Y, \theta _0 |_Y)$ is semistable.
But we know that  $\Delta_i((E_0)_Y)=\Delta_i(E_Y)=0$ for $i\ge 2$, so Lemma \ref{equivalence-Delta} and our induction assumption show that $(E_0)_Y$ is locally free. This implies that $E_Y$ is locally free as required.
\end{proof}

Now we can prove that $E$ is always locally free.  Let $M_{\bullet}$ be 
a Jordan--H\"older filtration of $(E, \theta)$ (or $(E, \nabla )$) and let us set 
$E_i=\Gr_i^M (E)$ and $r_i=\rk E_i$. Then by Lemma \ref{JH-factors} we know that 
${\Delta (E_i)} H^{n-2}=0$ and for all $i$ we have $c_1(E_i)=\frac{r_i}{r}c_1(E)$ in
$H^{2}_{\et}(X, \QQ _l)$ for $l\ne p$. 
By Theorem \ref{Bogomolov-restriction-for-Higgs} (Remark \ref{Bogomolov-restriction-for-connections-remark}, respectively)
for large $d$ and a general 
smooth hypersurface $Y\in |dH|$ the restriction $(E_i)_Y$ is a slope $H_Y$-stable 
logarithmic sheaf. Since $E$ is reflexive  by Lemma \ref{HL-Corollary} the restriction $E_Y$ is also reflexive 
for general $Y$.
Therefore by the induction assumption $E_Y$ is locally free. Moreover, our induction assumption and Lemma \ref{implication} 
imply that all the factors $(E_i)_Y$ are also locally free. So by Lemma \ref{loc-free-0} all $E_i$ are locally free outside a finite number of points.

However, we also know that the logarithmic Higgs sheaf (respectively, the sheaf with an integrable connection) $E_i^{**}$ is
slope $H$-stable and ${\Delta (E_i^{**})} H^{n-2}=0$. So by Claim \ref{semistable-loc-free} 
all sheaves $E_i^{**}$ are locally free. Hence we can apply Lemma
\ref{loc-free-passing-to-graded} to conclude that $E$ is locally free. This finishes the proof that (1) 
implies (2).

\medskip

To finish  the proof that (1) implies (3) note that by Theorem \ref{deformation-to-system} there
exists a decreasing filtration $E=N^0\supset N^1\supset ...\supset
N^m=0$ such that $\theta (N^i)\subset N^{i-1}\otimes \Omega_X (\log \,  D)$
(in case of logarithmic connections we use Simpson's filtration) and the
associated graded system $(E_0, \theta _0)$ of logarithmic Hodge
sheaves is slope $H$-semistable. Let us recall that by Claim \ref{easy-vanishing} we already know that 
$\Delta_i(E)=0$ for $2\le i<n$. Hence $\Delta_i(E_0)=0$ for $2\le i<n$ and  if we take large $d$ and a general divisor $Y\in |dH|$ then by Corollary \ref{Bogomolov-restriction-for-Higgs-ss} the restriction $(E_0)_Y$ satisfies (3) on $Y$.
So by the induction assumption $(E_0)_Y$ is locally free, which by Lemma \ref{loc-free-0} implies that $E_0$ is
locally free outside a finite number of points.  Since we already know that (1) implies (2), 
we see that $E_0^{**}$ is locally free. Then Lemma \ref{loc-free-passing-to-graded} implies that $E_0$ is locally free.

Now let us consider the canonical Higgs-de Rham sequence starting with $(E_0,\theta_0)$
(see Theorem \ref{Higgs-de-Rham})
$$ \xymatrix{
  & (V_0, \nabla _0)\ar[rd]^{\Gr _{S_0}}&& (V_1, \nabla _1)\ar[rd]^{\Gr _{S_1}}&\\
  (E_0, \theta _0)\ar[ru]^{C^{-1}}&&(E_1, \theta_1)\ar[ru]^{C^{-1}}&&...\\
}$$ 
where for simplicity we write $C^{-1}$ to denote the inverse Cartier transform.
By definition each $(V_m, \nabla_m)$ is slope $H$-semistable and
each $(E_{m+1}, \theta _{m+1})$ is the slope $H$-semistable
logarithmic system of Hodge sheaves associated to $(V_i, \nabla _i)$ via Simpson's
filtration.

\begin{Claim}\label{stable-Higgs-de-Rham-sequence}
All $E_m$ are locally free.
\end{Claim}

\begin{proof}
  Each $(E_m^{**}, \tilde \theta_m)$ is slope $H$-semistable and since (1) implies (2) 
  it is also locally free. Note also that
  $\Delta_i(E_m)=p^{im}\Delta (E)=0$ for $i<n$, so the same argument as in the case of $E_0$ shows that
  each sheaf $E_m$ is locally free outside a finite number of points.  Now we prove by induction on $m$
  that $E_m$ and $V_m$ are locally free. For $m=0$ we already know
  that $E_0=E$ is locally free and hence $V_0=C^{-1}(E_0)$ is also
  locally free.  So let us assume that $V_{m-1}$ is locally free. Then
  Lemma \ref{loc-free-passing-to-graded} implies that $E_m$ is locally
  free and hence also $V_m=C^{-1}(E_m)$ is locally free, which
  finishes the induction.
\end{proof}

Now let us write $p^m=rs_m+q_m$ for some non-negative integers $s_m$
and $0\le q_m<r$. Let us set $(G_m, \theta _{G_m}):=(E_m, \tilde
\theta _m)\otimes \det E^{-s_m}$. By \cite[Lemma 2]{La2} we have
$\Delta_i (G_m)=\Delta_i(E_m)=p^{im}\Delta _i(E)$ for $1\le i\le n$,
so $\Delta (G_m)=0$.  Note also that  $c_1(G_m)= q_m c_1(E)$ can take only 
finitely many values,  so Theorem
\ref{boundedness} implies that the family of locally free slope
$H$-semistable logarithmic Higgs sheaves $\{(G_m, \theta _{G_m})\} _{m\ge 0}$ is
bounded. In particular, the set $\{\Delta_n (G_m)\}_{m\ge
	0}=\{p^{nm}\Delta _n(E)\}_{m\ge 0}$ is finite. Hence $\Delta
_n(E)=0$, which finishes the proof of vanishing of $\Delta _i(E)$ for all $2\le i\le n$.
Now Lemma \ref{equivalence-Delta} implies that for all $m\ge 1$ 
$$c_m(E)=\frac{\binom{r}{m}}{r^m} c_1(E)^m$$ 
in $H^{2m}_{\et}(X, \QQ _l)$. This finishes the proof that (1) implies (3).

\medskip 
Clearly (2) implies (1), so it is sufficient to prove that (3) implies (1). 
Let us consider a rank $r\le p$ slope $H$-semistable logarithmic Higgs sheaf  $(E, \theta)$  such that $c_m(E)=\frac{\binom{r}{m}}{r^m} c_1(E)^m$ for all $m\ge 2$. 
By Lemma \ref{codim-3} we have $\Delta (E^{**})H^{n-2}=0$. Since (1) implies (3) we know that 
$(E^{**}, \tilde \theta)$ satisfies $c_m(E^{**})=\frac{\binom{r}{m}}{r^m} c_1(E^{**})^m$ for all $m\ge 1$.
As in the proof of Lemma \ref{codim-3} we see that $c_1(E^{**})=c_1(E)$.  So our assumptions imply that 
$c_m(E^{**})=c_m (E)$ for all $m\ge 1$. Since $E$ and $E^{**}$ have the same rank,
the Riemann--Roch theorem implies that the Hilbert polynomials of $E$ and $E^{**}$ are equal.  Let $T=E^{**}/E$. 
Then the short exact sequence
$$0\to E(m)\to E^{**} (m)\to T(m)\to 0$$
shows that  the Hilbert polynomial of $T$ 
is trivial. So $T=0$ and  $E$ is reflexive. In case of a sheaf with an integrable logarithmic connection
the proof of implication $(3)\Rightarrow (1)$ is exactly the same.
\end{proof}

\medskip

Lemma \ref{implication} and Theorem \ref{log-freeness}  immediately imply the following corollary:

\begin{Corollary} \label{log-freeness2} 
	Let $(E, \theta)$ be a rank $r\le p$ slope $H$-semistable logarithmic Higgs sheaf. 
	Let us assume that $E$ is reflexive and $\Delta (E)H^{n-2}=0$.
  If $(G, \theta _G)$ is a rank $s$ factor in a slope
 $H$-Jordan-H\"older filtration of $(E, \theta)$ then it is locally
 free and for all $m\ge 1$ we have
 $$c_m(G)=\binom{s}{m} \left( \frac{c_1(E)}{r}  \right)^m$$ 
 in $H^{2m}_{\et}(X, \QQ _l)$ for $l\ne p$.
\end{Corollary}

The following corollary is a direct generalization \cite[Theorem
11]{La2} to the logarithmic case.

\begin{Corollary} \label{loc-free-theorem} 
 Let $(E, \theta)$ be a rank $r\le p$ slope $H$-semistable logarithmic Higgs
  sheaf with $\ch _1(E)H^{n-1}=0$ and $\ch _2(E)H^{n-2}=0$. Assume
  that either $E$ is reflexive or the normalized Hilbert polynomial of
  $E$ is the same as that of $\cO_X$. Then $(E,\theta)$ has a
  filtration whose quotients are locally free slope $H$-stable
  logarithmic Higgs sheaves with vanishing Chern classes.
\end{Corollary}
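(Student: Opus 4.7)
The plan is to reduce both cases to the situation handled by Theorem \ref{log-freeness} applied to the reflexivization $E^{**}$, and then to extract the desired filtration as a slope $H$-Jordan--H\"older filtration of $(E,\theta)$ in the category of logarithmic Higgs sheaves.

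First I would use the numerical hypotheses together with the Hodge index theorem and Bogomolov's inequality to deduce that $c_1(E)=0$ in $H^2_{\et}(X,\QQ_l)$ and $\Delta(E)H^{n-2}=0$. Writing $\Delta(E)=2rc_2(E)-(r-1)c_1(E)^2$ and using the identity $c_1(E)^2 H^{n-2}=2c_2(E)H^{n-2}$ coming from $\ch_2(E)H^{n-2}=0$, a direct computation gives
\[
\Delta(E)H^{n-2}=c_1(E)^2H^{n-2}.
\]
The Hodge index theorem and the hypothesis $c_1(E)H^{n-1}=0$ force $c_1(E)^2H^{n-2}\le 0$, while Theorem \ref{log-Bogomolov-with-lifting} forces $\Delta(E)H^{n-2}\ge 0$. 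Hence both quantities vanish, and the fact recalled before Lemma \ref{JH-factors}—that any divisor class $M$ with $MH^{n-1}=M^2H^{n-2}=0$ vanishes in $H^2_{\et}(X,\QQ_l)$—yields $c_1(E)=0$ in $H^2_{\et}(X,\QQ_l)$.

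Next, by Lemma \ref{codim-3} the field $\theta$ extends to a slope $H$-semistable logarithmic Higgs field $\tilde\theta$ on $E^{**}$ still satisfying $\Delta(E^{**})H^{n-2}=0$. Applying Theorem \ref{log-freeness} to $(E^{**},\tilde\theta)$ gives that $E^{**}$ is locally free with
\[
c_m(E^{**})=\binom{r}{m}\left(\frac{c_1(E)}{r}\right)^m=0
\]
in $H^{2m}_{\et}(X,\QQ_l)$ for all $m\ge 1$. In the reflexive case, $E=E^{**}$ already holds. In the Hilbert polynomial case, Hirzebruch--Riemann--Roch together with the vanishing of all Chern classes of $E^{**}$ gives $\chi(E^{**}(mH))=r\,\chi(\cO_X(mH))$, so the normalized Hilbert polynomial hypothesis forces $\chi(E(mH))=\chi(E^{**}(mH))$ for every $m$. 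Since $E^{**}/E$ is supported in codimension $\ge 2$, a nonzero such sheaf would have nonzero Hilbert polynomial; therefore $E=E^{**}$ in this case as well.

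Finally I would take a slope $H$-Jordan--H\"older filtration of $(E,\theta)$ in the category of logarithmic Higgs sheaves. By the last part of Theorem \ref{log-freeness}, each rank $s$ factor $(G,\theta_G)$ is locally free with $c_m(G)=\binom{s}{m}(c_1(E)/r)^m=0$ in $H^{2m}_{\et}(X,\QQ_l)$; each such factor is slope $H$-stable by construction, and of rank $\le r\le p$, so it lies in $\HIG{0}(X,D)$. The only real obstacle is the initial numerical step, namely showing that the apparently weaker hypotheses $\ch_1(E)H^{n-1}=\ch_2(E)H^{n-2}=0$ collapse to the full strength $c_1(E)=0$ in $H^2_{\et}(X,\QQ_l)$ and $\Delta(E)H^{n-2}=0$ needed by Theorem \ref{log-freeness}; once this is done, the rest is a direct application.
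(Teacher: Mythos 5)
Your argument is correct and follows essentially the same route as the paper: the identity $\Delta(E)H^{n-2}=c_1(E)^2H^{n-2}$ combined with Bogomolov's inequality (Theorem \ref{log-Bogomolov-with-lifting}) and the Hodge index theorem gives $c_1(E)=0$ and $\Delta(E)H^{n-2}=0$, after which Theorem \ref{log-freeness} (via Lemma \ref{codim-3} and the Hilbert-polynomial comparison, exactly as in the proof of Corollary \ref{log-freeness2}) yields local freeness and the Jordan--H\"older factors with vanishing Chern classes. The only difference is cosmetic: you spell out the reflexivization/Riemann--Roch step that the paper handles by citing Corollary \ref{log-freeness2}.
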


\begin{proof}
  By Theorem \ref{log-Bogomolov-with-lifting} we have
  $\Delta(E)H^{n-2}\ge 0$.  So by the Hodge index theorem we get
$$0=2r \, {\ch } _2 (E)H^{n-2}=c_1(E)^2H^{n-2}- \Delta (E)H^{n-2}\le c_1(E)^2H^{n-2} 
\le \frac{(c_1(E)H^{n-1})^2}{H^n}=0.$$ Hence we have
$\Delta(E)H^{n-2}=0$ and $c_1(E)^2H^{n-2}=0$. Since $c_1(E)H^{n-1}=0$
this implies that $c_1(E)=0$ (see proof of Lemma \ref{JH-factors}). If $E$ is reflexive then the
corollary follows directly from Corollary \ref{log-freeness2}.
In the second case we argue as in the proof  that (3) implies (1) in Theorem \ref{log-freeness}.
Namely, $E^{**}$ satisfies condition (1) of Theorem \ref{log-freeness}
and hence $c_m (E^{**})=0$ for all $m\ge 1$. Then the Hilbert polynomials of $E$ and $E^{**}$ are equal.
So the Hilbert polynomial of $T=E^{**}/E$ is trivial. This implies that $T=0$ and $E$ is reflexive, which 
reduces us to the previous case.
\end{proof}

\medskip

\begin{Remark}
  A special case of the implication $(3)\Rightarrow (2)$ in Theorem \ref{log-freeness} was proven 
  in \cite[Proposition 3.12]{LSZ} using Faltings's result on Fontaine
  modules.
\end{Remark}

\begin{Remark}
	Theorem \ref{log-freeness} implies that all the sheaves $E_i$ and $V_i$
	appearing in the canonical  Higgs-de Rham sequence of a system of logarithmic Hodge sheaves, 
	which satisfies the equivalent conditions of Theorem \ref{log-freeness} and has a
	nilpotent Higgs field, are locally free. This follows also from the proof of Theorem \ref{log-freeness}
	(see Claim \ref{stable-Higgs-de-Rham-sequence}).
\end{Remark}

\begin{Remark} \label{Arapura} In proof of \cite[Lemma 4.4]{Ar} and
  \cite[Lemma 4.5]{Ar} (needed for \cite[Theorem 3]{Ar}) the author implicitly uses that $B(E,\theta)$
  is locally free if $(E,\theta)$ is locally free. More precisely, he
  applies \cite[Lemma 4.3]{Ar} to $B(E,\theta)$ and this fails if
  $B(E,\theta)$ is not locally free. It is easy to find examples for
  which $(E,\theta)$ is semistable, $E$ is locally free but
  $B(E,\theta)$ is not even reflexive. In particular, in both
  \cite[Lemma 4.4]{Ar} and \cite[Lemma 4.5]{Ar} one needs to assume
  that $(E,\theta)$ is semistable with vanishing Chern classes and
  then use our Theorem \ref{log-freeness} (see the above remark).
  
  Note also that at the time of writing \cite{Ar}, Theorem \ref{log-freeness}
  was not claimed in the logarithmic case that was used there. In the logarithmic case, 
  even if $k=\bar \FF_p$ and one has vanishing of all Chern classes, the method 
  of proof of local freeness from \cite[Proposition 3.12]{LSZ} does not apply.
\end{Remark}

\subsection{Local freeness in characteristic zero}

By a standard spreading-out argument Theorem \ref{strong-log-freeness}, Theorem \ref{log-freeness} and Corollary \ref{loc-free-theorem} imply the following generalization of \cite[Theorem 2]{Si} to the
logarithmic case.

\begin{Theorem}\label{free-log-Simpson}
  Let $X$ be a smooth projective variety defined over a field of
  characteristic zero and let $D$ be a normal crossing divisor
  on $X$.  Let $H$ be an ample divisor on $X$ and let $(E, \theta)$ be
  a slope $H$-semistable logarithmic Higgs sheaf with $\ch
  _1(E)H^{n-1}=0$ and $\ch _2(E)H^{n-2}=0$. Then the following conditions are equivalent:
   \begin{enumerate}
  	\item  $E$ is reflexive,
  	\item  $E$ is locally free, 
  	\item the normalized Hilbert polynomial of $E$ is the same as
  	that of $\cO_X$,
  	\item $E$ has vanishing rational Chern classes, i.e., $c_m(E)=0$ in $H^{2m} (X, \QQ )$ for all $m\ge 1$,
  	\item $(E, \theta )$ has a filtration whose quotients are locally free slope $H$-stable logarithmic Higgs sheaves with 
  	vanishing rational Chern classes.
  \end{enumerate} 
\end{Theorem}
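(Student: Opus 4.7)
The plan is the standard spreading-out reduction to positive characteristic, feeding Corollary \ref{loc-free-theorem} into the characteristic-zero setting. First I would descend the entire datum $(X, D, H, E, \theta)$ to a finitely generated smooth $\ZZ$-subalgebra $R \subset k$, obtaining an $R$-smooth projective $X_R$ with a relative normal crossings divisor $D_R$, a relatively ample $H_R$, and an $R$-flat coherent sheaf $E_R$ equipped with a logarithmic Higgs field $\theta_R$. After shrinking $\Spec R$, one may arrange simultaneously that: the fibers of $E_R$ all have the given Hilbert polynomial, each pair $(X_s, D_s)$ lifts to $W_2(k(s))$ (automatic from smoothness of $R$ over $\ZZ$, e.g. via $R/\mathfrak{m}_s^2$), slope $H_s$-semistability of $(E_s, \theta_s)$ holds on a dense open of $\Spec R$ by openness of semistability in flat families, and the numerical conditions $\ch_1(E) H^{n-1}=0$, $\ch_2(E) H^{n-2}=0$ persist on fibers.

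Picking any closed point $s \in \Spec R$ of residue characteristic $p > r$ in this good locus, Corollary \ref{loc-free-theorem} applies to $(E_s, \theta_s)$ and yields a filtration of $E_s$ by locally free slope $H_s$-stable logarithmic Higgs sheaves with vanishing Chern classes. In particular $E_s$ is locally free on all of $X_s$ with $c_i(E_s) = 0$ in $H^{2i}_{\et}(X_s, \QQ_\ell)$ for $\ell \ne p$. By flatness of $E_R$ over $R$ together with Nakayama's lemma, $E_R$ is then locally free in a Zariski open neighborhood of $X_s$, so the generic fiber $E$ is itself locally free; vanishing of the $c_i(E)$ in $\ell$-adic (hence, via the comparison theorem, in singular) cohomology of $X$ follows from the smooth proper base change isomorphism $H^{*}_{\et}(X \otimes_k \bar k, \QQ_\ell) \simeq H^{*}_{\et}(X_s \otimes \overline{k(s)}, \QQ_\ell)$. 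In the case where $E$ is assumed only to have the normalized Hilbert polynomial of $\cO_X$ rather than being reflexive, the same reasoning applied to $E^{**}$ followed by comparison of Hilbert polynomials forces $E = E^{**}$, exactly as in the proof of Corollary \ref{log-freeness2}, reducing to the reflexive case.

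To produce the required filtration itself, I would take any slope $H$-Jordan-H\"older filtration of $(E, \theta)$ in the category of logarithmic Higgs sheaves over $k$; its factors $G$ are slope $H$-stable of slope zero, since $c_1(E) = 0$ has just been established. Spreading out Theorem \ref{log-Bogomolov-with-lifting} furnishes the Bogomolov inequality for logarithmic Higgs sheaves in characteristic zero, and combining it with the Hodge index theorem exactly as in Lemma \ref{JH-factors} gives $c_1(G) = 0$ and $\Delta(G) H^{n-2} = 0$ for each $G$. A second spreading-out of each reflexive factor $G$, invoking the reflexive case of Theorem \ref{log-freeness} in positive characteristic, then shows that every $G$ is locally free with all Chern classes vanishing. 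The main technical obstacle is the bookkeeping required to propagate all open conditions—semistability of $E$ and of each JH factor, the $W_2$-lifting of $(X_s, D_s)$, the numerical Chern-class hypotheses, and flatness of the JH filtration over $R$—simultaneously to a single closed point $s$ of residue characteristic $> r$; the rank bound $r \le p$ built into the characteristic $p$ input is harmless because such points are Zariski dense in $\Spec R$, and no ingredients beyond the standard Simpson--Langer reduction machinery are required.
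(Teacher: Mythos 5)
Your proposal is correct and is essentially the paper's own argument: the paper proves this theorem by exactly the ``standard spreading-out'' reduction to Corollary \ref{loc-free-theorem} that you describe, with the same handling of the two hypotheses via Hilbert polynomials. The only slight imprecision is your appeal to ``the reflexive case of Theorem \ref{log-freeness}'' for the Jordan--H\"older factors $G$, which need not be reflexive; this is harmless, since for a general closed point $s$ the reduction $G_s$ is a Jordan--H\"older factor of the locally free $(E_s,\theta_s)$ and the ``Moreover'' clause of Theorem \ref{log-freeness} (equivalently, the filtration statement of Corollary \ref{loc-free-theorem}) already gives local freeness and vanishing of Chern classes for $G_s$, whence for $G$ by flatness and base change.
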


\begin{Remark}
The same theorems show that in the above theorem we can replace a logarithmic Higgs sheaf by a sheaf with an integrable logarithmic connection. 
\end{Remark}

\begin{Remark} \label{Schmid-nilpotent} Let $(V, \nabla)$ be a
  polarized variation of Hodge structures on $X-D$ with unipotent
  monodromy along the irreducible components of $D$. Let $(\tilde V,
  \tilde  \nabla)$ be Deligne's canonical extension of $(V, \nabla)$ with
  nilpotent residues along the irreducible components of $D$. Then Schmid's
  nilpotent orbit theorem implies that the Hodge filtration on $V$
  extends to a filtration of $\tilde V$ with locally free subquotients.

  Note that it is easy to see that $\tilde V$ has vanishing Chern classes in the de Rham cohomology of $X$
  (as all residues are nilpotent) and hence it also has vanishing Chern classes in $H^{2*} (X, \QQ)$. 
  Similarly, all the subobjects of $(\tilde V, \tilde  \nabla)$ have vanishing rational Chern classes. In
  particular, $(\tilde V, \tilde  \nabla)$ is slope semistable. Therefore
  Theorem \ref{existence-of-gr-ss-Griffiths-transverse-filtration}
  gives Simpson's filtration such that the associated graded $(E, \theta )$ is slope semistable. 
 Since $E$ has  vanishing rational Chern classes, Theorem \ref{free-log-Simpson} implies that $E$ is locally
  free. Note that \cite[Corollary 5.6]{La4} implies that the associated
  graded of Simpson's filtration of  $(\tilde V, \tilde  \nabla)$ coincides with the associated graded of
  the filtration obtained by Schmid's theorem. Moreover, if the
  associated graded $(E, \theta )$  is slope stable then the corresponding filtrations coincide.
\end{Remark}

\medskip

Again, using spreading out, Theorem \ref{log-freeness} implies the following
theorem.  However, we also give a different proof that deduces it from
Theorem \ref{free-log-Simpson} that was already known in the non-logarithmic case ($D=0$).
Note also that Corollary \ref{loc-free-theorem}  can be proven in a somewhat simpler way than Theorem
\ref{log-freeness}. The difference is that if we follow the proofs of implications $(1)\Rightarrow (2)$ and $(1)\Rightarrow (3)$ in Theorem \ref{log-freeness}  under assumptions of Corollary \ref{loc-free-theorem}  then we do not need to consider the family $\{ G_m\} _{m\ge 0}$ and we can work directly with the family $\{ E_m\} _{m\ge 0}$.  However, it should be stressed that similar arguments as below (showing that Theorem \ref{0-mu} follows from Theorem \ref{free-log-Simpson})
do not allow to deduce Theorem \ref{log-freeness} from Corollary \ref{loc-free-theorem} in positive characteristic. 
This is caused by the use of
coverings that usually do not preserve liftability to
$W_2(k)$. Another problem is that such covers are sometimes
necessarily inseparable, in which case the pullback does not preserve
semistability.

\begin{Theorem} \label{0-mu}
  Let $X$ be a smooth projective variety of dimension $n\ge 2$ defined
  over a field of characteristic zero and let $D$ be a normal crossing
  divisor on $X$.  Let $H$ be an ample divisor on $X$ and let $(E,
  \theta)$ be a slope $H$-semistable logarithmic Higgs sheaf with
  $\Delta (E)H^{n-2}=0$.  If $E$ is reflexive then it is locally free
  and
  $$c_m(E)=\frac{\binom{r}{m}}{r^m} c_1(E)^m$$ 
  in $H^{2m}(X, \QQ )$ for all $m\ge 1$ and any $l\ne
  p$. Moreover, each rank $s$ factor $(G, \theta _G)$ of a slope
  $H$-Jordan-H\"older filtration of $(E, \theta)$ is locally free with
  $$c_m(G)=\frac{\binom{s}{m}}{r^m} c_1(E)^m$$ 
  in $H^{2m}(X, \QQ )$ for all $m\ge 1$.
\end{Theorem}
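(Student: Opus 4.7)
The plan is to reduce the assertion to Theorem \ref{free-log-Simpson} by means of the Bloch--Gieseker covering trick, using the fact that in characteristic zero we have resolution of singularities at our disposal to repair the log structure. First I would apply the Bloch--Gieseker construction (as in the proof of Lemma \ref{equivalence-Delta}) to obtain a finite flat surjective morphism $f: \tilde X\to X$ from a smooth projective variety $\tilde X$ together with a line bundle $L$ on $\tilde X$ such that $f^*(\det E)^{-1}=L^{\otimes r}$. Composing $f$ with a further birational modification if necessary, I may assume that $\tilde D:=(f^{-1}(D))_{\reduced}$ is a simple normal crossing divisor on $\tilde X$ and that $(\tilde X, \tilde D)$ is a smooth log pair equipped with a natural morphism $f:(\tilde X,\tilde D)\to (X,D)$ of log pairs.

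Next I would set $\tilde E:=f^*E\otimes L$ equipped with the logarithmic Higgs field $\tilde\theta$ induced from $\theta$ via the natural map $f^*\Omega_X(\log\, D)\to \Omega_{\tilde X}(\log\, \tilde D)$. Since $f$ is finite and separable (characteristic zero), pullback preserves slope semistability with respect to $f^*H$, so $(\tilde E,\tilde\theta)$ is a slope $f^*H$-semistable logarithmic Higgs sheaf on $(\tilde X,\tilde D)$; moreover $\tilde E$ is reflexive, as $f^*E$ is reflexive by flatness of $f$ and twisting by a line bundle preserves reflexivity. By construction $c_1(\tilde E)=0$, and the invariance $\Delta(\tilde E)=f^*\Delta(E)$ together with the projection formula give $\ch_1(\tilde E)\cdot (f^*H)^{n-1}=0$ and $\ch_2(\tilde E)\cdot (f^*H)^{n-2}=0$.

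Now Theorem \ref{free-log-Simpson} applies to $(\tilde E,\tilde\theta)$ and produces a filtration whose successive quotients are locally free slope $f^*H$-stable logarithmic Higgs sheaves with vanishing Chern classes. Since extensions of locally free sheaves are locally free, $\tilde E$ itself is locally free with all Chern classes zero. By faithful flatness of $f$, this forces $E$ to be locally free; moreover, applying Lemma \ref{equivalence-Delta} to $\tilde E=f^*E\otimes L$ and using the injectivity of $f^*:H^{2*}(X,\QQ)\to H^{2*}(\tilde X,\QQ)$, we deduce the claimed formula $c_m(E)=\binom{r}{m}(c_1(E)/r)^m$. For the Jordan--H\"older factors, Lemma \ref{JH-factors} (whose proof transports verbatim to characteristic zero using Theorem \ref{free-log-Simpson} in place of Theorem \ref{log-Bogomolov-with-lifting}) gives $\Delta(G)H^{n-2}=0$ and $c_1(G)=\frac{s}{r}c_1(E)$, so applying the same argument to each factor $G$ yields local freeness and the Chern class formula.

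The step I expect to be most delicate is the first one: arranging a Bloch--Gieseker type cover that is compatible with the logarithmic structure, i.e.\ producing $(\tilde X,\tilde D)$ and a morphism of smooth log pairs along which semistability is preserved. The usual Bloch--Gieseker construction need not send $f^{-1}(D)$ to a normal crossing divisor, and one cannot freely blow up without potentially breaking the semistability one has just imported. Kawamata's covering lemma (or an analogous log-\'etale modification available in characteristic zero) resolves this, but verifying carefully that pullback along the resulting cover preserves slope semistability of the logarithmic Higgs sheaf, and that the numerical identities $\Delta(\tilde E)\cdot (f^*H)^{n-2}=0$ survive the modification, is where the real technical work lies.
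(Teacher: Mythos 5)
Your main line of argument for $E$ itself is essentially the paper's: the paper invokes the variant of the Bloch--Gieseker trick in \cite[Proposition 2.67]{KM}, which directly produces a finite flat surjective cover $f:\tilde X\to X$ with $(f^*D)_{\reduced}$ simple normal crossing, so the delicate point you flag at the end is settled by that single reference. Note that your parenthetical ``composing $f$ with a further birational modification'' would not do: a blow-up destroys finiteness and flatness, $f^*H$ is no longer ample, and semistability is no longer transported in the way you use it; it is precisely the Kawamata-type covering you mention (and which \cite[Proposition 2.67]{KM} encodes) that is needed. Granting the cover, your steps for $E$ --- semistability of $(\tilde E,\tilde\theta)$, Theorem \ref{free-log-Simpson}, flat descent of local freeness, Lemma \ref{equivalence-Delta} together with injectivity of $f^*$ on rational cohomology --- coincide with the paper's proof.

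The genuine gap is in the Jordan--H\"older part. First, replacing Theorem \ref{log-Bogomolov-with-lifting} by Theorem \ref{free-log-Simpson} inside the proof of Lemma \ref{JH-factors} does not work: that proof needs the inequality $\Delta(E^i)H^{n-2}\ge 0$ for each semistable factor, and Theorem \ref{free-log-Simpson} is a local-freeness statement, not a Bogomolov inequality (in characteristic zero one gets the inequality by spreading out and applying Theorem \ref{log-Bogomolov-with-lifting}, but this has to be said). Second, and more seriously, ``applying the same argument to each factor $G$'' fails as stated: a Jordan--H\"older factor of a reflexive sheaf need not be reflexive, so for $f^*G$ twisted by the appropriate line bundle you cannot invoke Theorem \ref{free-log-Simpson}, whose hypotheses require reflexivity or control of the normalized Hilbert polynomial, neither of which you have for $G$. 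The paper avoids applying the covering trick to $G$ separately: it observes that $f^*G\otimes L$ is an extension of factors of a slope $f^*H$-Jordan--H\"older filtration of $(\tilde E,\tilde\theta)$, and Theorem \ref{free-log-Simpson} applied to $\tilde E$ already shows those factors are locally free slope stable with vanishing Chern classes; hence $f^*G\otimes L$ is locally free, $G$ is locally free by flat descent, and the Chern class formula follows from $c_1(f^*G)=-s\,c_1(L)=\frac{s}{r}c_1(f^*E)$ together with Lemma \ref{equivalence-Delta}. Your argument can be repaired along these lines, but as written the Jordan--H\"older step is not justified.
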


\begin{proof}
  By a variant of the Bloch--Gieseker covering trick (see
  \cite[Proposition 2.67]{KM}) there exists a smooth projective
  variety $\tilde X$ and a finite flat surjective covering $f: \tilde
  X\to X$ together with a line bundle $L$ such that $f^*(\det
  E)^{-1}=L^{\otimes r}$ and the pullback $\tilde D=(f^*D)_{\reduced}$
  is a simple normal crossing divisor. Let us define a logarithmic
  Higgs sheaf $(\tilde E, \tilde \theta: \tilde E\to \tilde E\otimes
  \Omega_{\tilde X}(\log \, \tilde D))$ by $(\tilde E, \tilde
  \theta):= f^*(E, \theta)\otimes L.$ Note that $\tilde E$ is
  reflexive, $c_1(\tilde E)=0$ and
$$\Delta (\tilde E)(f^*H)^{n-2}=\Delta (f^* E)(f^*H)^{n-2}=\deg f\cdot \Delta (E)H^{n-2}=0.$$ 
Hence $\tilde E$ is a slope $f^*H$-semistable logarithmic Higgs sheaf
with $\ch_1(\tilde E)(f^*H)^{n-1}=0$ and $\ch _2(\tilde
E)(f^*H)^{n-2}=0$. By Theorem \ref{free-log-Simpson} $\tilde E$ is
locally free and it has vanishing Chern classes.  Therefore by the
flat descent $E$ is also locally free and  we have
$$0=\Delta_m (\tilde E)=f^* (\Delta_m(E))$$
for all $m\ge 2$. 
Using the fact that $f$ induces an injection $H^{2m}(X, \QQ)\to H^{2m}(\tilde X,
\QQ)$, we get vanishing of $\Delta_m(E)$ for all $m\ge 2$.
Hence by Lemma \ref{equivalence-Delta} we get equalities
$r^mc_m(E)=\binom{r}{m}c_1(E)^m.$

Now let $(G, \theta _G)$ be a rank $s$ factor of a slope
$H$-Jordan-H\"older filtration of $(E, \theta)$. Then $f^*(G, \theta
_G)\otimes L$ is an extension of some factors of a slope
$f^*H$-Jordan-H\"older filtration of $(\tilde E, \tilde \theta)$. In
particular, it has a filtration whose quotients are locally free slope
$f^*H$-stable logarithmic Higgs sheaves with vanishing Chern
classes. It follows that $G$ is locally free, $c_1(f^*G)=-s
c_1(L)=\frac{s}{r}c_1(f^*E)$ and
$$c_m(G)=\frac{\binom{s}{m}}{s^m}c_1(G)^m=\frac{\binom{s}{m}}{r^m}c_1(E)^m$$
for all $m\ge 1$.
\end{proof}

\subsection{Restriction theorem}

The following theorem generalizes \cite[Theorem 12]{La2} to the logarithmic case
and to arbitrary $(Y,B)$.

\begin{Theorem} \label{curve-restriction} Let $X$ be a smooth
  projective variety of dimension $n$ defined over an algebraically
  closed field $k$ of characteristic $p$ and let $D$ be a normal
  crossing divisor on $X$.  Let $H$ be an ample divisor on $X$ and let
  $E$ be a locally free $\cO_X$-module of rank $r\le p$ with $\Delta
  (E) H^{n-2}=0$.  Assume that a logarithmic Higgs sheaf $(E, \theta)$
  is slope $H$-semistable.  Let $f: (Y, B)\to (X,D)$ be a proper
  morphism of smooth log pairs that has a good lifting to $W_2(k)$
  (see Definition \ref{good-lifting}).  Then the induced logarithmic
  Higgs sheaf $$f ^*(E, \theta)=(f^*E, f^*E\stackrel{f^*\theta}{\to}
  f^* E\otimes f^*\Omega_X(\log \, D) \stackrel{\id_{f^*E}\otimes df }
  {\longrightarrow} f^* E\otimes \Omega_Y(\log \, B ))$$ is slope
  $A$-semistable for any ample divisor $A$ on $Y$.
\end{Theorem}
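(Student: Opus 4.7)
My plan is to combine the Higgs-de Rham flow of Theorem \ref{Higgs-de-Rham} with the functoriality of the inverse Cartier transform (Appendix) and derive a contradiction via the boundedness Theorem \ref{boundedness} if $f^*(E,\theta)$ fails to be slope $A$-semistable on $(Y,B)$. First, by Theorem \ref{deformation-to-system}, $(E,\theta)$ carries a decreasing filtration $E=N^0\supset N^1\supset\ldots\supset N^m=0$ with $\theta(N^i)\subset N^{i-1}\otimes\Omega_X(\log\, D)$ whose associated graded $(E_0,\theta_0)=\bigoplus_i N^{i-1}/N^i$ is a slope $H$-semistable system of logarithmic Hodge sheaves of slope $\mu_H(E)$. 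Since the pullback filtration $f^*N^{\bullet}$ exhibits $f^*(E,\theta)$ as an iterated extension of the pieces of $f^*(E_0,\theta_0)$, it suffices to prove slope $A$-semistability of $f^*(E_0,\theta_0)$.

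Next I use Theorem \ref{Higgs-de-Rham} to build the canonical Higgs-de Rham sequence $(E_i,\theta_i)\stackrel{C^{-1}}{\longrightarrow}(V_i,\nabla_i)\stackrel{\mathrm{Gr}_{S_i}}{\longrightarrow}(E_{i+1},\theta_{i+1})$ on $X$. By Theorem \ref{log-freeness} (or Lemma \ref{stable-Higgs-de-Rham-sequence}), every $E_i$ and $V_i$ is locally free with all higher discriminants vanishing, and $c_1(E_i)=p^ic_1(E)$. Writing $p^i=rs_i+q_i$ with $0\le q_i<r$, the twists $(G_i,\theta_{G_i}):=(E_i,\theta_i)\otimes\det(E)^{-s_i}$ satisfy $c_1(G_i)=q_ic_1(E)$, which takes only finitely many values. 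By Theorem \ref{boundedness} the family $\{(G_i,\theta_{G_i})\}_{i\ge 0}$ is bounded on $X$, and so is its pullback $\{f^*(G_i,\theta_{G_i})\}_{i\ge 0}$ on $Y$; in particular $\mu_{\max,A}(f^*G_i)$ is bounded uniformly in $i$.

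Now suppose, for contradiction, that $f^*(E_0,\theta_0)$ is not slope $A$-semistable and let $F_0\subset f^*E_0$ be its maximal destabilizing Higgs subsheaf, so $\delta_0:=\mu_A(F_0)-\mu_A(f^*E_0)>0$. Functoriality of $C^{-1}$ under the good lifting (Appendix) yields the identifications $f^*V_i=C^{-1}_{(\tilde Y,\tilde B)}(f^*E_i,f^*\theta_i)$ and $f^*(E_{i+1},\theta_{i+1})=\mathrm{Gr}_{f^*S_i}(f^*V_i,f^*\nabla_i)$ for every $i$. Transferring $F_0$ step by step through $C^{-1}_Y$ and the graded of the pulled-back Simpson filtration -- using that $C^{-1}$ preserves subobjects and multiplies $c_1$ by $p$, while passing to the Simpson graded does not decrease the slope difference of a maximal destabilizer -- one obtains by induction on $i$ destabilizing Higgs subsheaves $F_i\subset f^*E_i$ with $\mu_A(F_i)-\mu_A(f^*E_i)\ge p^i\delta_0$. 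Twisting preserves slope differences, so $\mu_{\max,A}(f^*G_i)\to\infty$, contradicting the uniform bound above.

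The main obstacle is the propagation step: one needs to control carefully how a destabilizing Higgs subsheaf transforms under the logarithmic inverse Cartier transform (accounting for residue contributions to $c_1$) and how it survives the passage to the graded of the pulled-back Simpson filtration, particularly in the regime where $f^*V_i$ is itself non-semistable on $(Y,B)$. This is precisely where the full strength of the logarithmic functoriality result of the Appendix is required, together with a slope-comparison argument in the spirit of the proof of Theorem \ref{log-freeness}.
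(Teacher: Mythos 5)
Your proposal is correct and follows essentially the same route as the paper: reduce to a system of Hodge sheaves via Theorem \ref{deformation-to-system} and openness of semistability, then run the Higgs--de Rham flow, transport a hypothetical destabilizer through the logarithmic inverse Cartier transform (using the Appendix functoriality and local freeness from Theorem \ref{log-freeness}) so that its slope excess grows like $p^i$, and contradict the boundedness of the twisted family $\{(G_i,\theta_{G_i})\}$. The paper simply delegates the propagation step to the proof of \cite[Theorem 12]{La2}, which is exactly the argument you spell out.
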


\begin{proof} By Theorem \ref{deformation-to-system} we can deform
  $(E,\theta)$ to a slope $H$-semistable system of Hodge sheaves
  $(E_0,\theta_0)$. Moreover, by Theorem \ref{log-freeness} $E_0$ is
  locally free. If $f ^*(E_0, \theta_0)$ is semistable then by
  openness of semistability $f ^*(E, \theta )$ is also semistable.
  So without loss of generality one can assume that $(E,\theta)$ is a
  system of Hodge sheaves. The rest of the proof is the same as that
  of \cite[Theorem 12]{La2} using Theorem \ref{Higgs-de-Rham} instead
  of \cite[Theorem 5]{La2}. Here we also need to apply functoriality
  of the inverse Cartier transform in the logarithmic case (see
  Theorem \ref{functoriality-Cartier}).
\end{proof}

\medskip 

Applying the above theorem to iterates of the Frobenius morphism we get 
the following corollary:

\begin{Corollary}
  In the notation of the above theorem assume that $(Y,B)=(X,D)$ and
  $f$ is the Frobenius morphism.  Then $E$ is strongly $A$-semistable
  for any ample divisor $A$ on $X$.
\end{Corollary}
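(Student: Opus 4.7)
The plan is to apply Theorem \ref{curve-restriction} with $f$ equal to the $n$-th iterate of the Frobenius endomorphism $F^n : (X,D) \to (X,D)$ for every $n\ge 0$, and conclude that the logarithmic Higgs sheaf $(F^n)^*(E, \theta)$ is slope $A$-semistable for every ample divisor $A$ on $X$. This is precisely what is meant by strong $A$-semistability in this setting, so the corollary will follow once all hypotheses of the theorem are checked for each $F^n$.

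First I would verify that each $F^n$ is a morphism of smooth log pairs from $(X,D)$ to itself: the scheme-theoretic preimage satisfies $F^{-1}(D)=pD$ as Cartier divisors, which has the same support as $D$, so the support condition in the definition of a morphism of smooth log pairs from the Notation section is met, and this property is stable under composition. Next I would check the good-lifting hypothesis. By the standing assumption, $(X,D)$ admits a lifting $(\tilde X,\tilde D)$ to $W_2(k)$, and this datum provides a good lifting of $F$ to $W_2(k)$ in the sense of Definition \ref{good-lifting}. Crucially, good liftings compose: if $f$ and $g$ each admit good liftings, so does $f\circ g$. Thus each iterate $F^n$ inherits a good lifting obtained by composing the good lifting of $F$ with itself $n$ times, and the hypotheses of Theorem \ref{curve-restriction} are satisfied for every $F^n$.

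With this verified, Theorem \ref{curve-restriction} applied to $f = F^n$ gives that $(F^n)^*(E,\theta)$ is slope $A$-semistable as a logarithmic Higgs sheaf on $(X,D)$ for every ample $A$ and every $n\ge 0$, which is the desired strong $A$-semistability. The only subtlety in the argument is the bookkeeping for the good-lifting property under iteration of Frobenius; the semistability conclusion itself is a direct invocation of the previous theorem, with no further Higgs-theoretic input needed.
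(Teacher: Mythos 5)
Your overall strategy---apply Theorem \ref{curve-restriction} to the iterates $F^n$ and note that the induced Higgs field on $(F^n)^*E$ is zero (since $dF=0$), so that Higgs-semistability of $(F^n)^*(E,\theta)$ is ordinary semistability of $(F^n)^*E$---is exactly the paper's one-line proof, and your observation that good liftings compose (the composite pullback of a boundary coordinate is again a monomial in boundary coordinates, and for Frobenius the source and target covers can be taken equal since $F$ is the identity on the underlying space) is the right way to justify applying the theorem to $F^n$ rather than just to $F$.

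There is, however, one genuinely false assertion: you claim that the standing $W_2(k)$-lifting $(\tilde X,\tilde D)$ of the pair ``provides a good lifting of $F$ to $W_2(k)$.'' It does not. A good lifting of the Frobenius morphism in the sense of Definition \ref{good-lifting} is in particular a global lift $\tilde F:(\tilde X,\tilde D)\to(\tilde X,\tilde D)$ of the Frobenius endomorphism over $W_2(k)$, and the existence of such a Frobenius lift is a strong extra condition on $X$ that is \emph{not} implied by liftability of $(X,D)$ (it holds for, e.g., toric varieties and ordinary abelian varieties, but fails for general $W_2$-liftable varieties; only \emph{local} Frobenius lifts always exist, and those are what the Cartier transform uses internally). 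Were your claim true, the corollary would hold for every $W_2$-liftable pair, which is not what is being asserted. The error is harmless here only because it is dispensable: the corollary is stated ``in the notation of the above theorem,'' so the existence of a good lifting of $f=F$ is part of the hypothesis, not something to be derived. With that reading, the remainder of your argument is correct and coincides with the paper's.
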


\medskip

In formulation of \cite[Theorem 12]{La2} the
author forgot to explicitly state the assumption on existence of
a compatible lifting of $C$ and $X$ (even though it was used in the
proof). The next example shows that this assumption is really
necessary.

\begin{Example}
  Here we show an example of a smooth projective surface that is
  liftable to the Witt ring $W(k)$ and a slope semistable Higgs
  sheaf, which is not semistable after restricting to
  the normalization of some projective curve on this surface.

  Let us consider a smooth complex projective surface $X$ which is a
  quotient of the product of upper half planes $\HH\times \HH$ by an
  irreducible, torsion free, cocompact lattice $G$ in $\PGL (2,
  \RR)\times \PGL(2,\RR)$.  Then $\Omega_X=L\oplus M$, where
  $L^2=M^2=0$, $L$ and $M$ are strictly nef (see \cite[Lemma 4.5]{La5}).
 
  Let us consider a Higgs bundle $(E, \theta)$, where $E=L\oplus
  \cO_X$ and $\theta$ is given by the canonical inclusion $L\to
  \Omega_X$. This Higgs bundle corresponds to the representation
  $\rho: \pi_1(X)\to \PGL (2, \CC)$ obtained by projecting the
  inclusion $G\subset \PGL(2, \RR)\times \PGL (2,\RR)$ onto the first
  factor and embedding into the complexification. Then the Higgs
  bundle $(E',\theta'):= \Sym ^2(E, \theta)\otimes (\det E, \det
  \theta)^{-1}$ corresponds to the composition of $\rho$ with the
  adjoint representation $ \PGL (2, \CC)\to \SL (3,\CC)$.  In
  particular, since this representation is irreducible, the Higgs
  bundle $(E', \theta')$ is slope stable (with respect to any
  polarization) and it has vanishing rational Chern classes.  This can
  be also checked directly from the definition of stability.
 More precisely,  $(E',\theta')$ is a system of Hodge sheaves $E^{2,0}\oplus E^{1,1}\oplus E^{0,2}=L\oplus \cO_X\oplus L^{-1}$
 with $\theta$ given by the canonical inclusions  $E^{2,0}=L\to E^{1,1}\otimes \Omega _X=L\oplus M $
and   $E^{1,1}=\cO_X\to E^{0,2}\otimes \Omega _X=\cO_X \oplus (M\otimes L^{-1})$ onto the first factor.
 This system has only two non-trivial saturated subsystems of Hodge sheaves given by 
 $E^{0,2}=L^{-1}$ and $ E^{1,1}\oplus E^{0,2}= \cO_X\oplus L^{-1}$. In particular,  $(E',\theta')$
 is slope $H$-stable if and only if $LH>0$.

  By openness of stability, the reduction of $(E', \theta')$ modulo
  almost all primes is stable.  Again this can be easily seen
  directly, because ampleness is an open condition and $LH>0$ implies an analogous inequality for the reductions. 
  Note also that for almost all reductions, $X_s$ lifts to $W(k(s))$.
 
  Now for a large number of primes (of positive density) the reduction
  of $L$ is not nef (see \cite[Example 5.6]{La5}). For such $s$ there exists an irreducible curve
  $C_s$ such that $L_s.C_s<0$. Let $\nu _s: \bar C_s\to C_s$ be the
  normalization. Then $\nu _s^*(E'_s,\theta'_s)$ is not semistable
  because it has degree zero and it contains a Higgs subbundle $(\nu
  _s^*L^{-1}_s,0)$ of positive degree. This shows that $\nu_s: \bar
  C_s\to X_s$ cannot be compatibly lifted to $W_2(k(s))$, even though
  both $\bar C_s$ and $X_s$ can be lifted to $W(k(s))$.
\end{Example}

By the usual spreading-out technique, Theorem \ref{curve-restriction}
implies the following corollary. 

\begin{Corollary}\label{curve-restriction-0}
  Let $X$ be a smooth projective variety of dimension $n$ defined over
  an algebraically closed field $k$ of characteristic $0$. Let $D$ be a normal
  crossing divisor on $X$ and let  $H$ be
  an ample divisor on $X$. Let $E$ be a locally free $\cO_X$-module
  with $\Delta (E) H^{n-2}=0$.  If a logarithmic Higgs sheaf $(E, \theta)$ is slope
  $H$-semistable then for every smooth projective curve $C$ not contained in $D$ 
  and a morphism $f: C\to X$ the Higgs bundle $f^*(E, \theta)$ is
  semistable.
\end{Corollary}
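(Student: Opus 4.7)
The plan is a standard spreading-out argument that reduces the statement to Theorem \ref{curve-restriction} applied fiber-by-fiber at closed points of large positive residue characteristic.

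First I would descend all the data to a finitely generated $\ZZ$-subalgebra $R \subset k$. After shrinking $\Spec R$ I may assume there is a smooth projective $X_R \to \Spec R$ with a relative simple normal crossing divisor $D_R$, a relatively ample divisor $H_R$, a locally free $\cO_{X_R}$-module $E_R$ of rank $r$ with a relative logarithmic Higgs field $\theta_R$, and a smooth projective curve $C_R \to \Spec R$ with a morphism $f_R : C_R \to X_R$ whose image does not lie in $D_R$. Further shrinking of $\Spec R$ allows me to assume that at every closed point $s\in \Spec R$ the fiber $(E_s, \theta_s)$ is still slope $H_s$-semistable and that $\Delta(E_s) H_s^{n-2}=0$ (by openness of semistability in a flat family of sheaves with fixed Hilbert polynomial and flatness of Chern classes).

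Next, suppose for contradiction that $f^*(E,\theta)$ is not semistable on $C$. Then there is a $f^*\theta$-invariant subsheaf $F \subset f^*E$ with $\mu(F) > \mu(f^*E)$; spreading out the maximal destabilising subsheaf (possibly shrinking $\Spec R$ again) I get a flat family $F_R \subset f_R^* E_R$ whose fiber $F_s$ is $f_s^*\theta_s$-invariant and satisfies $\mu(F_s) > \mu(f_s^*E_s)$ at every closed point $s$. Now, closed points of $\Spec R$ exist with residue fields of arbitrarily large positive characteristic $p$, and I may restrict attention to those $s$ with $p > r$. For such $s$ the pair $(X_s, D_s)$, the curve $C_s$ and the morphism $f_s$ all possess good liftings to $W_2(k(s))$, obtained by pulling back the model over $R$ along a lift $R \to W_2(k(s))$ of the map $R \to k(s)$; such a lift exists after localising $R$ further, since $\Spec R$ is generically smooth over $\Spec \ZZ$.

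Applying Theorem \ref{curve-restriction} to each such fiber, I conclude that $f_s^*(E_s, \theta_s)$ is slope semistable on $C_s$, which contradicts the existence of $F_s$. Therefore $f^*(E, \theta)$ must be semistable. The main obstacle in the argument is ensuring that the morphism $f_s : (C_s, B_s) \to (X_s, D_s)$ (with $B_s$ the reduced preimage of $D_s$) admits a \emph{good} lifting to $W_2(k(s))$ compatibly with the liftings of $X_s$ and $C_s$; but this is automatic from the spreading-out provided $R$ is chosen so that its structure map $\ZZ \to R$ is smooth at the chosen closed point, which can always be arranged by further localisation.
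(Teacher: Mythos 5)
Your proposal is correct and is precisely the ``usual spreading-out technique'' that the paper invokes for this corollary: descend to a finitely generated $\ZZ$-algebra, spread out a putative destabilizing Higgs subsheaf, and derive a contradiction at closed points of large residue characteristic using Theorem \ref{curve-restriction}, with the good lifting to $W_2(k(s))$ supplied by formal smoothness of $R$ over $\ZZ$ after localization (cf.\ the remark after Definition \ref{good-lifting}). No substantive differences from the paper's intended argument.
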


\begin{Remark}
  In the case of complex projective manifolds and $D=0$  the above corollary follows  
  from Simpson's correspondence. A rough sketch of proof is as follows.
  A slope semistable Higgs bundle with 
  vanishing rational  Chern classes corresponds to a local system on $X$. So for any morphism $f : C\to
  X$ we get an induced local system on $C$. This again corresponds to
  a slope semistable Higgs bundle on $C$. By functoriality of
  Simpson's correspondence this is the pullback of the original Higgs
  bundle. The general case with $\Delta (E)H^{n-2}=0$ can be reduced
  to the above one by taking $\mathop{\rm End\,} E$ and using \cite[Theorem 2]{Si}
  (or Theorem \ref{free-log-Simpson}). More precisely, if  $\Delta (E)H^{n-2}=0$ then $c_1  (\mathop{\rm End\,} E)=0$ and
  $\Delta (\mathop{\rm End\,} E) H^{n-2}=0$, so $ (\mathop{\rm End\,} E , \theta _{\mathop{\rm End\,} E})$  is semistable with vanishing rational Chern classes.
  Then  $f^*(\mathop{\rm End\,} E, \theta _{\mathop{\rm End\,} E})$ is semistable, which implies
  semistability of $f^*(E, \theta)$.
\end{Remark}

\section{Nearby-cycles}

The main aim of this section is to understand the restriction of an integrable
logarithmic connection (or a logarithmic Higgs sheaf) to the boundary divisor. 
In case of Hodge structures on complex varieties the
analogous problem is realised by the construction of a nearby-cycle
functor for the category of real graded-polarized families of mixed
Hodge structures (see \cite[Section 4]{Br1}). Here we use a different
approach that allows us to keep more information about the
restrictions.  As in \cite{Br1} this construction is related to the
standard constructions of a nearby-cycles functor coming back to
Grothendieck, Deligne and Saito.

In this section we will use some basic facts and definitions related to Lie algebroids
for which we refer to \cite{La4}.

\subsection{Nearby-cycles functor} \label{def-Lie}

Let $X$ be a smooth projective variety of dimension $n$ defined over
an algebraically closed field $k$. Let $D$ be a simple normal
crossing divisor on $X$ and let $Y$ be an irreducible component of $D$.

\medskip

Let $\imath: Y\hookrightarrow X$ be the canonical embedding.  We
define a Lie algebroid $\LL _Y$ on $Y$ as the triple $(L, [\cdot,\cdot
], \alpha)$, where $L=\imath^*T_X(\log D)$ is a locally free
$\cO_Y$-module with the Lie algebra structure induced from the
standard Lie algebra structure on $T_X$ and the anchor map $\alpha:
L\to \mathop{\rm Der}_k(\cO_Y)=T_Y$ is the canonical map induced by
$\imath$. The anchor map induces a $k$-derivation
$d_{\Omega_{\LL_Y}}:\cO_Y\to \Omega_{\LL_Y}=L^*$.

Giving an $\LL_Y$-module structure ${\LL_Y}\to \End _kE$ on a coherent
$\cO_Y$-module $E$ is equivalent to giving an integrable
$d_{\Omega_{\LL_Y}}$-connection $\nabla_{\LL_Y}: E\to E\otimes
_{\cO_Y}\Omega_{\LL_Y}$ (see \cite[Lemma 3.2]{La4}).

The usefulness of the above construction comes from the fact that the
restriction to $Y$ defines an obvious functor 
$$\Psi _Y: \Mic (X, D)\to  \Mod{\LL _Y}$$ 
from the category $\Mic (X, D)$ of coherent $\cO_X$-modules with an
integrable logarithmic connection on $(X, D)$ to the category
$\Mod{\LL _Y}$ of coherent $\cO_Y$-modules with an $\LL _Y$-module
structure. If $(V, \nabla)$ is a coherent $\cO_X$-module with an integrable
connection on $(X, D)$ then $\Psi_Y(V, \nabla)$ is defined as the
restriction $\imath^* V$ of $V$ to $Y$ and the $\LL_Y$-module
structure is given by the integrable $d_{\Omega_{\LL_Y}}$-connection
$\imath^* \nabla : \imath^* V \to \imath^* V\otimes \imath^* \Omega _X
(\log \, D)$. By an abuse of notation we will often write $(V_Y,\nabla|_Y)$
to denote $\Psi_Y(V, \nabla)$.

\medskip

Let $\LL_Y^0$ be the trivial Lie algebroid underlying $\LL _Y$ (i.e.,
we consider the same $L=\imath^*T_X(\log D)$ but with zero Lie bracket
and zero anchor map). Similarly as above, we get the functor
$$\Phi _Y: \Hig (X, D)\to  \Mod{\LL _Y^0}$$ 
from the category $\Hig (X, D)$ of coherent $\cO_X$-modules with a
logarithmic Higgs field on $(X, D)$ to the category $\Mod{\LL _Y^0}$
of coherent $\cO_Y$-modules with an $\LL _Y^0$-module structure. Note
that $\Mod{\LL _Y^0}$ is the same as the category of coherent
$\cO_Y$-modules with a $\Sym ^{\bullet} (\imath^* T_X (\log \,
D))$-module structure. Similarly, as above we will often write
$(E_Y,\theta|_Y)$ to denote $\Phi(E,\theta)$.

\subsection{General monodromy filtrations}

Let $Y$ be a smooth projective variety of dimension $n$ defined over
an algebraically closed field $k$.  
Let $\LL$ be a smooth Lie algebroid on $Y/k$ and let $E$ be an
$\LL$-module.

Let $N: E\to E$ be a nilpotent endomorphism of $\LL$-modules.  By
\cite[Proposition 1.6.1]{De} $N$ induces on $E$ a unique finite
increasing filtration $M_{\bullet}$ by ${\LL}$-submodules such that:
\begin{enumerate}
\item $N (M_i)\subset M_{i-2}$ for all $i$,
\item $N^i$ induces an isomorphism $\Gr ^M_iE\stackrel{\sim}{\to} \Gr ^M_{-i}E$ for all $i\ge 0$.
\end{enumerate}
We call $M_{\bullet}$ the \emph{monodromy filtration} for the ${\LL}$-module $E$.

\medskip

Let us define the $j$-th primitive part $P_j(E)$ of $E$ as the kernel
of $N^{j+1}: \Gr ^{M}_jE\to \Gr ^{M}_{-j-2}E$ for $j\ge 0$ and
$P_j(E)=0$ for $j<0$. Then by \cite[(1.6.4)]{De} we have the
decomposition into primitive parts
\begin{equation}\label{prim-deco}
 \Gr ^{M}_jE=\bigoplus _{i\ge \max (0, -j)}N^iP_{j+2i}(E)\simeq \bigoplus _{i\ge \max (0, -j)}P_{j+2i}(E).
\end{equation}

\begin{Lemma}\label{torsion-free-quotients}
If $E$ is torsion free (as an $\cO_Y$-module) then all quotients $\Gr ^M_jE$
are also torsion free.
\end{Lemma}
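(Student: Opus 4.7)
My plan is to prove the lemma by showing that each $M_j$ is saturated in $E$ (equivalently, each $\Gr^M_j E$ is torsion free). The $\LL$-module structure provides the crucial rigidity: since $N$ is $\LL$-linear, both the iterated kernels $K_i := \ker(N^{i+1})$ and images $J^i := N^i(E)$ are $\LL$-submodules of $E$, and each $K_i$ is automatically saturated in $E$, since $E/K_i$ embeds in $E$ via $N^{i+1}$ (onto $J^{i+1}$) and hence is torsion free.

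The key additional step is to show each $J^i$ is also saturated in $E$, which genuinely requires the $\LL$-flatness of $N$. I would argue this by a local analysis at codimension-one points $y \in Y$, where $\cO_{Y,y}$ is a DVR and $E_y$ is a free $\cO_{Y,y}$-module: the $\LL$-flatness of $N$ translates to an equation $\partial(N_y) = [N_y, A]$ for the connection matrix $A$ of any derivation $\partial$ in the anchor image, and this equation conjugates $N_y$ along flat flows, so the Jordan type of $N_y$ is locally constant and hence constant on $Y$ (using the smoothness of the Lie algebroid and the generic surjectivity of the anchor). Constant Jordan type is equivalent to constant $\rk (N^i)$, i.e., $E/J^i$ being torsion free.

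I then invoke the standard formula for the monodromy filtration,
$$
M_j = \sum_{i \geq \max(0, -j)} K_{j+i} \cap J^i,
$$
which is verified on Jordan blocks of the generic fiber of $E$ and extended to a sheaf-level identity by Deligne's uniqueness of the monodromy filtration \cite[Proposition 1.6.1]{De}. Each summand $K_{j+i} \cap J^i$ is saturated in $E$ as an intersection of saturated submodules; a downward induction on $j$ from the top weight, using the isomorphism $N^j : \Gr^M_j E \xrightarrow{\sim} \Gr^M_{-j} E$ to transfer saturatedness from the negative to the positive side of the filtration, then shows each $M_j$ is saturated in $E$, and hence $\Gr^M_j E$ is torsion free.

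The main obstacle is the saturatedness of the images $J^i$, which genuinely uses the $\LL$-flatness hypothesis; without it one can easily construct torsion-free $\cO_Y$-modules with $\cO_Y$-linear but non-$\LL$-flat nilpotent $N$ whose monodromy filtration has graded pieces with torsion. A secondary subtlety, handled by the inductive argument using the isomorphism property, is that a sum of saturated submodules need not be saturated on general grounds.
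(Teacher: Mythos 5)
Your reduction of the lemma to the saturatedness of the kernels $K_i=\ker N^{i+1}$ and the images $J^i=N^i(E)$ is a sensible strategy, and the argument for the $K_i$ is correct. The proposal breaks down at the key step, the saturatedness of the $J^i$. The mechanism you propose --- an equation $\partial(N_y)=[N_y,A]$ for derivations $\partial$ in the image of the anchor, integrated ``along flat flows'' to get local constancy of the Jordan type --- is unavailable in precisely the situations where the lemma is used. First, the anchor of $\LL$ may be identically zero: the paper's main application is to $\LL_Y^0$-modules, where both the bracket and the anchor vanish, so $\LL$-linearity of $N$ imposes no differential constraint whatsoever. Second, even when the anchor is surjective there are no flows to integrate in characteristic $p$. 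Moreover, in the zero-anchor case the step is not merely unproved but false: on $Y=\PP^1$ take $E=\cO\oplus\cO(-1)$ and let $N$ be the composition $E\twoheadrightarrow\cO(-1)\stackrel{\iota}{\hookrightarrow}\cO\hookrightarrow E$, where $\iota$ vanishes at a point $y_0$; letting a rank-one trivial Lie algebroid act through $N$ itself makes $N$ an endomorphism of $\LL$-modules, yet $J^1=N(E)=\cO(-y_0)$ is not saturated in $E$ (its saturation is the first summand $\cO$), and the monodromy filtration $0\subset\cO(-y_0)\subset\cO\subset E$ has $\Gr^M_0E=\cO/\cO(-y_0)$, a torsion sheaf. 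So saturatedness of the $J^i$ cannot be extracted from $\LL$-linearity alone; whatever saves the conclusion has to come from additional structure (in the paper's applications $E$ is in addition slope semistable, or is restricted from $X$, and the destabilizing subsheaf $\cO\subset E$ in the example above is exactly what is then excluded). The secondary issue you flag --- that a sum of saturated submodules need not be saturated --- is also left essentially unresolved by the sketched downward induction.

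The paper's own proof is organized quite differently and never attempts to saturate the images: it inducts on the rank of $E$, using Deligne's primitive decomposition $\Gr^M_jE=\bigoplus_iN^iP_{j+2i}(E)$ together with the identification, via \cite[Corollaire (1.6.6)]{De}, of the graded pieces of the filtration induced by $M_\bullet$ on $\ker N$ with the primitive parts $P_j(E)$, and then feeds the torsion-free sheaf $\ker N$, which has strictly smaller rank, back into the induction. I would encourage you to test any proposed argument against the rank-two example above; it shows that a correct proof must use more than the bare hypotheses in the statement, and it is a useful touchstone for locating exactly where extra input (semistability, or the global origin of $E$ and $N$) has to enter.
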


\begin{proof}
  The proof is by induction on the rank of $E$. If $E$ has rank $1$
  then $N$ is nilpotent if and only if $N=0$, so the filtration is trivial.

  Now let us assume that the assertion holds for all sheaves of rank
  less than the rank of $E$. If $N=0$ the assertion is trival, so we can assume that $N\ne 0$.
  Since for $j\ge 0$ the map $N^j$ induces
  an isomorphism $\Gr ^M_jE\stackrel{\sim}{\to} \Gr ^M_{-j}E$, the
  image $N^jP_j(E)$ is the kernel of $N: \Gr ^{M}_{-j}E\to \Gr
  ^{M}_{-j-2}E$.  By \cite[Corollaire (1.6.6)]{De} the associated
  graded of the filtration induced by $M_{\bullet}$ on $\ker N$
  satisfies
$$\Gr _{-j}^M(\ker N)\stackrel{\sim}{\to}N^jP_j(E)\simeq P_j(E).$$
But $\ker N\subset E$ is torsion free and since $N$ is nilpotent, the rank of 
$\ker N$ is less than the rank of $E$.  So by the induction assumption all
quotients $\Gr _{-j}^M(\ker N)$ are torsion free. Hence all $P_j(E)$
are torsion free and by the decomposition (\ref{prim-deco}) all $\Gr
^M_jE$ are also torsion free.
\end{proof}

Now let us fix an ample divisor $H$ on $Y$. If an $\LL$-module $E$ is
slope $H$-semistable then we always assume that it is torsion free as
an $\cO_Y$-module. The following lemma proves that the monodromy
filtration (or the filtration by primitive cohomology) of a slope
$H$-semistable $\LL$-module can be always refined to a
Jordan--H\"older filtration.

\begin{Lemma} \label{semistable-quotients}
  Let $E$ be a slope $H$-semistable $\LL$-module. Then every quotient
  $\Gr _j^ME$ of the monodromy filtration $M_{\bullet}$ of $E$ is
  slope $H$-semistable with $\mu _{H}(\Gr _j^ME)=\mu _{H}(E)$.
Moreover, all  $P_j(E)$ are  slope $H$-semistable with $\mu _{H}(P_j(E))=\mu _{H}(E)$.
\end{Lemma}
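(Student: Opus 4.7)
The plan is first to show that every step $M_j$ of the monodromy filtration has slope $\mu_H(M_j) = \mu_H(E)$, then to deduce the same for each $\Gr_j^M E$, then to upgrade this to semistability of each quotient, and finally to handle the primitive parts via the decomposition (\ref{prim-deco}).

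Set $\delta_j := c_1(\Gr_j^M E)\cdot H^{n-1}$ and $r_j := \rk \Gr_j^M E$. The isomorphisms $N^j : \Gr_j^M E \stackrel{\sim}{\to} \Gr_{-j}^M E$ for $j \ge 0$ yield $\delta_j = \delta_{-j}$ and $r_j = r_{-j}$. Summing, for every integer $j$ we obtain
$$c_1(M_j)\cdot H^{n-1} + c_1(M_{-j-1})\cdot H^{n-1} = c_1(E)\cdot H^{n-1}, \quad \rk M_j + \rk M_{-j-1} = \rk E.$$
Semistability of $E$ applied to both submodules $M_j$ and $M_{-j-1}$ gives $c_1(M_i)\cdot H^{n-1} \le \mu_H(E)\rk M_i$ for $i \in \{j,-j-1\}$; combined with the identities above, both inequalities are forced to be equalities, so $\mu_H(M_j) = \mu_H(E)$ for every $j$. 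By Lemma \ref{torsion-free-quotients} each $\Gr_j^M E$ is torsion-free, and subtraction immediately yields $\mu_H(\Gr_j^M E) = \mu_H(E)$ whenever $\Gr_j^M E \ne 0$.

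For semistability of $\Gr_j^M E$, take any $\LL$-submodule $F \subset \Gr_j^M E$ and let $\tilde F \subset M_j$ be its preimage under $M_j \to \Gr_j^M E$. Then $\tilde F$ is an $\LL$-submodule of $E$, so $\mu_H(\tilde F) \le \mu_H(E)$. Writing $c_1(\tilde F)\cdot H^{n-1} = c_1(M_{j-1})\cdot H^{n-1} + c_1(F)\cdot H^{n-1}$ and $\rk \tilde F = \rk M_{j-1} + \rk F$, and using the already-established $\mu_H(M_{j-1}) = \mu_H(E)$, the inequality collapses to $\mu_H(F) \le \mu_H(E) = \mu_H(\Gr_j^M E)$.

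Finally, $P_j(E) = \ker(N^{j+1} : \Gr_j^M E \to \Gr_{-j-2}^M E)$ is an $\LL$-submodule of the now semistable $\Gr_j^M E$, giving $\mu_H(P_j(E)) \le \mu_H(E)$. From (\ref{prim-deco}), $\Gr_j^M E \cong \bigoplus_{i \ge \max(0,-j)} P_{j+2i}(E)$; since $\Gr_j^M E$ is semistable of slope $\mu_H(E)$ and each direct summand has slope at most $\mu_H(E)$, each summand must have slope exactly $\mu_H(E)$. A subsheaf of a semistable sheaf with the same slope is itself semistable, so each $P_j(E)$ is semistable. The only genuinely delicate point is the symmetry argument in the first step, where the identity $\delta_j = \delta_{-j}$, $r_j = r_{-j}$ is combined with the semistability inequality on two complementary submodules; the remaining assertions then reduce to routine slope manipulations together with the lifting of sub-$\LL$-modules of $\Gr_j^M E$ to sub-$\LL$-modules of $E$.
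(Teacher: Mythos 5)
Your proof is correct. It rests on the same essential ingredients as the paper's argument -- the isomorphisms $N^j:\Gr^M_jE\stackrel{\sim}{\to}\Gr^M_{-j}E$, semistability of $E$ applied to the steps of the filtration, torsion-freeness of the graded pieces from Lemma \ref{torsion-free-quotients}, and the primitive decomposition (\ref{prim-deco}) -- but it organizes them differently. The paper argues by induction on the rank of $E$: it uses $N^d$ to identify the bottom piece $M_{-d}$ with the top quotient $E/M_{d-1}$, squeezes their slopes between the two semistability inequalities, concludes that the middle piece $M_{d-1}/M_{-d}$ is semistable of the same slope, and then applies the induction hypothesis to the nilpotent endomorphism induced on it (which requires observing that the induced monodromy filtration is the one inherited from $M_{\bullet}$). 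Your argument is non-inductive: the symmetry $\delta_j=\delta_{-j}$, $r_j=r_{-j}$ yields the complementarity $\deg_H M_j+\deg_H M_{-j-1}=\deg_H E$ and $\rk M_j+\rk M_{-j-1}=\rk E$ at every level simultaneously, so summing the two semistability inequalities forces $\mu_H(M_j)=\mu_H(E)$ for all $j$ at once; semistability of each $\Gr^M_jE$ then follows by the standard device of lifting a submodule of $\Gr^M_jE$ to a submodule of $E$ containing $M_{j-1}$. This buys you a shorter, cleaner proof that avoids both the rank induction and the verification that the monodromy filtration restricts correctly to $M_{d-1}/M_{-d}$; the paper's inductive structure, on the other hand, mirrors the induction already used in Lemma \ref{torsion-free-quotients}. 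Your treatment of the primitive parts (direct summands of a semistable module of slope $\mu$, each of slope at most $\mu$, hence all of slope exactly $\mu$ and semistable) is exactly what the paper leaves implicit.
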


\begin{proof} 
  The proof is by induction on the rank of $E$. For rank $1$ the
  assertion is clear so assume that it holds for all sheaves of rank
  less than the rank of $E$.

  Let $d$ be the largest integer such that $M_{-d}\ne 0$.  Since $E$
  is slope $H$-semistable we have $\mu _{H}(M_{-d})\le \mu _{H}(E)$.
  But $N^d$ induces an isomorphism $\Gr ^M_{d}E=E/M_{d-1}\stackrel{\sim}{\to}\Gr
  _{-d}^ME= M_{-d}$ and by slope
  $H$-semistability of $E$ we get $\mu _{H}(M_{-d})=\mu
  _{H}(E/M_{d-1})\ge \mu _{H}(E)$.  Hence $\mu _{H}(M_{-d})= \mu
  _{H}(E)$ and $M_{-d}$ is slope $H$-semistable. So 
  $E/M_{d-1}\simeq M_{-d}$ is also slope $H$-semistable with 
  $\mu _{H}(E/M_{d-1})=\mu _{H}(E)$.
  This shows that $M_{d-1}$ is  slope $H$-semistable with
  $\mu _{H}(M_{d-1})=\mu_{H}(E)$.   Note also that $M_{d-1}/M_{-d}$ is torsion free by Lemma
  \ref{torsion-free-quotients}.  Since  $\mu _{H}(M_{-d})=\mu _{H}(M_{d-1})=\mu _{H}(E)$, 
  this implies that $M_{d-1}/M_{-d}$ is slope $H$-semistable with $\mu _{H}(M_{d-1}/M_{-d})=\mu _{H}(E)$.
  But $N$ induces on $M_{d-1}/M_{-d}$ a nilpotent endomorphism whose
  quotients coincide with the remaining quotients of the monodromy
  filtration $M_{\bullet}$ of $E$. Hence by the induction assumption
  all $\Gr _j^ME$ are slope $H$-semistable with $\mu _{H}(\Gr
  _j^ME)=\mu _{H}(E)$.

  The second assertion follows immediately from the first one and the
  decomposition (\ref{prim-deco}) of $\Gr _j^ME$ into primitive parts.
\end{proof}

\subsection{Residue maps} 

Let $X$ be a smooth projective variety of dimension $n$ defined over
an algebraically closed field $k$. Let $D$ be a simple normal
crossing divisor on $X$ and let $Y$ be an irreducible component of $D$.
We can write $D=D'+Y$ for some divisor $D'$ which does not contain $Y$.
In the following we denote $D'$ by $D-Y$ and set $D^Y=(D-Y)|_Y$. 

\medskip

Note  that ${\LL_Y}$ (see Subsection \ref{def-Lie})  is equipped with the canonical map
$\Res : \Omega_{\LL_Y}=\imath^*\Omega_X(\log \, D)\to \cO_Y$ given by the Poincar\'e residue.
Using it for any ${\LL_Y}$-module $E$ we can define the \emph{residue endomorphism}  $\Res_E$
as a composition
$$E\stackrel{\nabla_{\LL_Y}}{\longrightarrow} E\otimes _{\cO_Y} \Omega_{\LL_Y} \stackrel{\id_E\otimes \Res}{\longrightarrow} E\otimes_{\cO_Y} \cO_Y=E.$$ 
Since $\Res\circ d_{\Omega_{\LL_Y}}=0$, this endomorphism is
$\cO_Y$-linear.  It is easy to check that $\Res_E$ is an endomorphism
of ${\LL_Y}$-modules.  In the same way we can define the residue
endomorphism of an ${\LL_Y^0}$-module.

\medskip

Let $\Mod{\LL_Y}_0$ ($\Mod{\LL_Y^0}_0$) be the full subcategory of $\Mod{\LL_Y}$
  ($\Mod{\LL_Y^0}$) containing as objects all $\LL_Y$-modules $E$
  ($\LL_Y^0$-modules, respectively) with $\Res
  _E=0$. 
  
 Similarly, let $\nilMod{\LL_Y}$ ($\nilMod{\LL_Y^0}$) be the full subcategory of $\Mod{\LL_Y}$
  ($\Mod{\LL_Y^0}$) containing as objects all $\LL_Y$-modules $E$
  ($\LL_Y^0$-modules, respectively) that have nilpotent residue $\Res
  _E$. 

\begin{Lemma} \label{functor-Upsilon}
The category $\Mod{\LL_Y}_0$ is equivalent to the category $ \Mic (Y,D^Y)$. Similarly, 
the category $\Mod{\LL^0_Y}_0$ is equivalent to the category $ \Hig (Y,D^Y)$.
Moreover, we have natural functors
$$\Upsilon: \nilMod{\LL_Y}\to \Mic (Y,D^Y)$$
given by sending $E$ to $\Gr ^WE$, where
$W_{\bullet}$ is the monodromy
filtration of $\Res _E$
and 
$$\Upsilon ^0: \nilMod{\LL_Y^0}\to \Hig (Y,D^Y)$$
given by sending $E$ to $\Gr ^ME$, where
$M_{\bullet}$ is the monodromy
filtration of $\Res _E$.
\end{Lemma}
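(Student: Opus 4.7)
My plan is to first identify the subcategories $\Mod{\LL_Y}_0$ and $\Mod{\LL_Y^0}_0$ concretely via the standard short exact sequence at the boundary, and then to show that the monodromy filtration of a nilpotent residue consists of sub-algebroid-modules, so that passing to the associated graded kills the residue and lands in the zero-residue subcategory.

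First I would exploit the short exact sequence of locally free sheaves on $Y$
\begin{equation*}
0 \to \cO_Y \to \imath^* T_X(\log D) \to T_Y(\log D^Y) \to 0,
\end{equation*}
whose dual is
\begin{equation*}
0 \to \Omega_Y(\log D^Y) \to \imath^* \Omega_X(\log D) \to \cO_Y \to 0,
\end{equation*}
the last map being precisely the Poincar\'e residue $\Res$. Unwinding definitions, an $\LL_Y$-module structure on $E$ is an integrable $d_{\Omega_{\LL_Y}}$-connection $\nabla_{\LL_Y}\colon E \to E\otimes \imath^*\Omega_X(\log D)$, and the condition $\Res_E=0$ says exactly that $\nabla_{\LL_Y}$ factors through $E\otimes \Omega_Y(\log D^Y)$. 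The resulting map is an integrable logarithmic connection on $(Y,D^Y)$, and the construction is obviously reversible (given a logarithmic connection on $(Y,D^Y)$, extend by zero on the $\cO_Y$-factor of $\imath^*T_X(\log D)$), giving the equivalence $\Mod{\LL_Y}_0 \simeq \Mic(Y,D^Y)$. The same argument with zero anchor and zero bracket, where integrability is replaced by $\theta\wedge\theta=0$, yields $\Mod{\LL_Y^0}_0 \simeq \Hig(Y,D^Y)$.

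For the functor $\Upsilon$, I would take $E\in\nilMod{\LL_Y}$ and observe that $\Res_E$ is a nilpotent $\LL_Y$-module endomorphism, so by the uniqueness part of Deligne's \cite[Proposition 1.6.1]{De} the monodromy filtration $W_\bullet$ automatically consists of $\LL_Y$-submodules. Hence $\Gr^W E$ inherits an $\LL_Y$-module structure grading-piece by grading-piece. The essential computation is that $\Res_{\Gr^W E}=0$: for $e\in W_i$ one has $\Res_E(e)\in W_{i-2}\subset W_{i-1}$, so the induced endomorphism of $W_i/W_{i-1}$ vanishes, and therefore $\Res_{\Gr^W_i E}=0$ on every graded piece. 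Thus $\Gr^W E \in \Mod{\LL_Y}_0\simeq \Mic(Y,D^Y)$. Functoriality of $\Upsilon$ is automatic: any morphism $\phi\colon E\to E'$ in $\nilMod{\LL_Y}$ intertwines $\Res_E$ with $\Res_{E'}$, so uniqueness of the monodromy filtration forces $\phi$ to preserve $W_\bullet$, inducing a morphism on the associated graded. The construction of $\Upsilon^0$ is formally identical, working in $\nilMod{\LL_Y^0}$ with filtration $M_\bullet$ and target $\Hig(Y,D^Y)$.

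I do not expect any real obstacle; the only delicate point is checking that the monodromy filtration consists of $\LL_Y$-submodules, which follows cleanly from $\LL_Y$-linearity of $\Res_E$ combined with the uniqueness in Deligne's construction.
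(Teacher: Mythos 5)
Your proposal is correct and follows essentially the same route as the paper: the residue exact sequence $0\to \Omega_Y(\log D^Y)\to \imath^*\Omega_X(\log D)\to \cO_Y\to 0$ identifies the zero-residue subcategories with $\Mic(Y,D^Y)$ and $\Hig(Y,D^Y)$, and since $\Res_E$ is an endomorphism of $\LL_Y$-modules the monodromy filtration is by submodules, with $N(W_i)\subset W_{i-2}\subset W_{i-1}$ killing the residue on each graded piece. The only stylistic difference is that you invoke uniqueness in Deligne's construction to get the filtration by $\LL_Y$-submodules (and functoriality), whereas the paper simply applies Deligne's result directly in the category of $\LL_Y$-modules; both are fine.
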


\begin{proof}
The short exact sequence
$$0\to \Omega_Y (\log \, D^Y)\to \imath^*\Omega_X(\log \, D)\mathop{\to}^{\Res } \cO_Y \to 0.$$
shows that an ${\LL_Y}$-module $E$ with $\Res_E=0$ gives rise to a canonically
defined integrable logarithmic connection  $E\to
E\otimes_{\cO_Y} \Omega_Y (\log \, D^Y)$. Conversely, if $(V, \nabla)$ is an element of $\Mic (Y, D^Y)$
then $\nabla$ defines an integrable $d_{\Omega_{\LL _Y}}$-connection, so we get an ${\LL_Y}$-module $V$ with $\Res_V=0$. If $E$ is an ${\LL_Y^0}$-module  with $\Res_E=0$ then the same argument shows that
$E$ is a logarithmic Higgs sheaf on $(Y, D^Y)$. This shows the first part of the lemma.

Now let us assume that $E$ is an ${\LL_Y}$-module with nilpotent
$N=\Res _E$. Let $W_{\bullet}$ be the corresponding monodromy
filtration (in the category of ${\LL_Y}$-modules).  Note that the
composition $W_i\stackrel{\Res _{W_i}}{\longrightarrow} W_i\to
W_i/W_{i-1}$ is zero as $N (W_i)\subset W_{i-1}$.  Hence $\Res _{\Gr
  ^W_iE}=0$ and each quotient $\Gr ^W_iE$ is endowed with an integrable
logarithmic connection $\nabla_i^W$ on $(Y, D^Y)$. Similarly, for an
${\LL_Y^0}$-module $E$ with nilpotent $N=\Res_E$ all quotients $\Gr
_i^ME$ of the monodromy filtration $M_{\bullet}$ have canonically
defined structure of a logarithmic Higgs sheaf $(\Gr _i^ME,
\theta_i^M)$ on $(Y, D^Y)$.
\end{proof}

\medskip

Let $D_i$ be an irreducible component of $D$ different from $Y$.  Let
$\Res _{D_i}: \Omega_X(\log \, D)\to \cO_{D_i}$ be the Poincar\'e
residue along $D_i$. Pulling it back to $Y$ we get an $\cO_Y$-linear
map $\Res _{Y}^{D_i}: \Omega_{\LL_Y}=\imath^*\Omega_X(\log \, D)\to
\cO_{D_i^Y}$, where $D_i^Y={D_i\cap Y}$. Now for any $\LL _Y$-module $E$ we consider
the composition map
$$E\stackrel{\nabla_{\LL_Y}}{\longrightarrow} E\otimes _{\cO_Y} \Omega_{\LL_Y} \stackrel{\id_E\otimes 
  \Res _{Y}^{D_i}}{\longrightarrow} E\otimes_{\cO_Y}
\cO_{D_i^Y}=E_{D_i^Y}.$$ One can easily check that this map is
$\cO_Y$-linear and it factors through the restriction map $E\to
E_{D_i^Y}$. Therefore it defines the map $\Res_{E}^{D_i}: E_{D_i^Y}\to
E_{D_i^Y}$ that we call \emph{the residue map of $E$ along $D_i$}.  In
the same way we can define the residue maps along $D_i$ for any $\LL
_Y^0$-module $E$.

\medskip
\begin{Remark}
Let $(V, \nabla)$ be an object of $MIC(X, D)$ and let $E=\Psi _Y (V, \nabla)$ be the
corresponding $\LL_Y$-module. Then the residue map
$\Res_ E: E\to E$ coincides with the residue map $\Res _{Y}\nabla:
V_{Y}\to V_{Y}$.  Similarly, for any irreducible component $D_i$
of $D-Y$ the residue map $\Res_{E}^{D_i}: E_{D_i^Y}\to E_{D_i^Y}$
coincides with the restriction of the residue map $\Res _{D_i}\nabla:
V_{D_i}\to V_{D_i}$ to $D_i^Y$.
\end{Remark}

\subsection{Compatibility of the Cartier transform with monodromy filtrations}

Let $X$ be a smooth projective variety of dimension $n$ defined over
an algebraically closed field $k$ and let $D$ be a simple normal
crossing divisor on $X$. Let $Y$ be an irreducible component of $D$.

Let $Z=\VV (\cO_Y(-Y))$ be the total space of the normal bundle of
$\imath: Y\hookrightarrow X$ and let $\pi : Z\to Y$ be the canonical
projection. Let $s:Y\to Z$ be the zero section and let $Y_0$ be its image.

\begin{Lemma}\label{Wahl}
Let us set $D^Z= Y_0+\pi^{-1}(D^Y)$.
The short exact sequence
$$0\to \pi ^*\Omega_Y (\log \, D^Y)\to \Omega _{Z} (\log \, D^Z)
{\to} \Omega _{Z/Y} (\log \, Y_0) =\cO_Z\to 0.$$
is the pull back of 
$$0\to \Omega_Y (\log \, D^Y)\to \imath ^*\Omega_X(\log \, D)\mathop{\to}^{\Res _Y} \cO_Y \to 0.$$
\end{Lemma}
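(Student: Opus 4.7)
My plan is to construct an isomorphism of short exact sequences
\begin{equation*}
\varphi\colon \pi^*\imath^*\Omega_X(\log \, D) \longrightarrow \Omega_Z(\log \, D^Z)
\end{equation*}
that is the canonical inclusion on the common subsheaf $\pi^*\Omega_Y(\log \, D^Y)$ and that commutes with the respective quotient maps onto $\cO_Z$. The second displayed sequence in the lemma is the Poincar\'e residue sequence for $(X,D)$ along $Y$ (restricted via $\imath^*$), and the first is the logarithmic relative cotangent sequence for the log-smooth projection $\pi\colon(Z,D^Z)\to(Y,D^Y)$; the identification $\Omega_{Z/Y}(\log \, Y_0)\simeq\pi^*\cO_Y(-Y)\otimes\cO_Z(Y_0)\simeq \cO_Z$ follows since the tautological section of the normal bundle gives $\cO_Z(Y_0)\simeq \pi^*\cO_Y(Y)$.

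The construction is Zariski local on $X$. Choose adapted coordinates $x_1,\dots,x_n$ with $Y=\{x_1=0\}$ and with the other components of $D$ given by $\{x_j=0\}$ for $2\le j\le k$. The class $x_1\bmod I_Y^2$ is a local generator of the canonical isomorphism $I_Y/I_Y^2\simeq \cO_Y(-Y)$, and since $Z=\Spec_Y\Sym^\bullet\cO_Y(-Y)$ it corresponds to a regular function $\tilde x_1$ on $Z$ which is a local equation for $Y_0$. I define $\varphi$ to be the canonical inclusion on the subsheaf $\pi^*\Omega_Y(\log \, D^Y)\subset \pi^*\imath^*\Omega_X(\log \, D)$ (this handles the basis elements $\pi^*\imath^*(dx_j/x_j)$ for $2\le j\le k$ and $\pi^*\imath^*(dx_j)$ for $j>k$), and to send $\pi^*\imath^*(dx_1/x_1)$ to $d\tilde x_1/\tilde x_1$.

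The main obstacle is showing that this prescription is independent of the choice of local equation for $Y$. If $x_1'=ux_1$ for some local unit $u\in\cO_X^*$ near $Y$, then $\tilde x_1'=(u|_Y)\tilde x_1$ and a direct calculation gives
\begin{equation*}
\frac{d\tilde x_1'}{\tilde x_1'} - \frac{d\tilde x_1}{\tilde x_1} \;=\; \pi^*\!\left(\frac{d(u|_Y)}{u|_Y}\right),
\end{equation*}
while $\pi^*\imath^*(dx_1'/x_1')-\pi^*\imath^*(dx_1/x_1)=\pi^*\imath^*(du/u)$; since $du/u$ is a regular form on $X$, its pullback lies in the subsheaf $\pi^*\Omega_Y(\log \, D^Y)$ where $\varphi$ is already fixed, and the two differences agree under $\varphi$. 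Hence the local recipes glue to a global map $\varphi$. Both source and target are locally free $\cO_Z$-modules of rank $n$ and $\varphi$ sends a local basis to a local basis, so $\varphi$ is an isomorphism. By construction it restricts to the identity on $\pi^*\Omega_Y(\log \, D^Y)$, and the quotient map applied to $d\tilde x_1/\tilde x_1$ gives $1\in\cO_Z$, matching $\pi^*(\Res_Y)$ applied to $\pi^*\imath^*(dx_1/x_1)$; therefore $\varphi$ is an isomorphism of short exact sequences, proving that the first sequence is the $\pi$-pullback of the second.
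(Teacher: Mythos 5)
Your proof is correct, but it takes a genuinely different route from the one the paper actually follows. The paper's argument is cohomological: it first treats the case $D=Y$ by identifying the extension class of $0\to\Omega_Y\to\imath^*\Omega_X(\log\, Y)\to\cO_Y\to 0$ in $H^1(\Omega_Y)$ with the Atiyah class of $\cO_Y(-Y)$ and then invoking Wahl's \cite[Proposition 3.3]{Wa}, which says that pulling this extension back to the total space $Z$ of the normal bundle yields $0\to\pi^*\Omega_Y\to\Omega_Z(\log\, Y_0)\to\cO_Z\to 0$; the remaining components of $D-Y$ are then incorporated by comparing the three standard exact sequences whose cokernels are $\bigoplus\cO_{D_i\cap Y}$ (respectively their preimages in $Z$). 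Your argument instead constructs the isomorphism of extensions by hand in adapted coordinates and verifies that the local prescriptions glue --- this is precisely the ``alternative proof by direct local calculation'' that the paper mentions in the remark immediately after its proof but does not carry out. What your version buys is an explicit, canonical isomorphism $\pi^*\imath^*\Omega_X(\log\, D)\simeq\Omega_Z(\log\, D^Z)$ given locally by $\pi^*\imath^*(dx_1/x_1)\mapsto d\tilde x_1/\tilde x_1$; this is in fact the form in which the lemma is later used (in the proof of Lemma \ref{compatibility} the isomorphism is quoted locally as $\pi^*(\frac{dx_1}{x_1}|_Y)=\frac{dt}{t}$), whereas the extension-class argument a priori only identifies the two sequences up to isomorphism of extensions. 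The price is the coordinate-independence check, which is the one delicate point of your write-up: you handle it correctly by observing that the map is canonical on the subsheaf $\pi^*\Omega_Y(\log\, D^Y)$, so that only the chosen lift of $1\in\cO_Z$ depends on the local equation $x_1$ of $Y$, and that replacing $x_1$ by $ux_1$ changes $\pi^*\imath^*(dx_1/x_1)$ and $d\tilde x_1/\tilde x_1$ by the same element $\pi^*\bigl(d(u|_Y)/(u|_Y)\bigr)$ of that subsheaf.
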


\begin{proof}
Let us recall that the extension class of
$$0\to \Omega_Y \to \imath ^*\Omega_X(\log \, Y)\mathop{\to}^{\Res _Y} \cO_Y \to 0$$
in $\Ext^1(\cO_Y, \Omega_Y)=H^1(\Omega_Y)$ is equal to the Atiyah
class of $\cO_Y(-Y)$, which is also the image of the class of $\cO_Y
(-Y)$ in $H^1(\cO_Y^*)$ under the map $H^1(\cO_Y^*)\to H^1(\Omega_Y)$.
Hence by \cite[Proposition 3.3]{Wa} the pull back of the above sequence to $Z$ induces
$$0\to \pi ^*\Omega_Y \to \Omega _{Z} (\log \, Y_0) {\to} \cO_Z\to 0.$$
Let $\{D_i\}$ be the divisors corresponding to the irreducible components of $D-Y$.
Now the required assertion follows from the following standard exact sequences:
$$0\to \Omega_Y \to \Omega_Y(\log \, D^Y) \to \bigoplus \cO_{D_i\cap Y} \to 0,$$
$$0\to \imath ^*\Omega_X(\log \, Y) \to \imath ^*\Omega_X(\log \, D) \to \bigoplus \cO_{D_i\cap Y} \to 0,$$
and
$$0\to  \Omega _{Z} (\log \, Y_0)\to  \Omega _{Z} (\log \, D^Z) \to \bigoplus \cO_{\pi^{-1}(D_i\cap Y)} \to 0.$$
\end{proof}

An alternative proof of the lemma can be obtained, e.g., by directly making local
calculation and checking equality of the corresponding gluing
conditions (cf. proof of \cite[Proposition 3.3]{Wa}).

\medskip

Let $(V, \nabla)$ be a coherent $\cO_X$-module with an integrable logarithmic
connection $\nabla: V\to V\otimes \Omega_X(\log \, D)$.
After restricting to $Y$ we see that $V_Y$ acquires an integrable $\Omega_{\LL _Y}$-connection. 
After further pull back to $Z$ we get an induced integrable logarithmic connection 
$$\nabla': \pi ^* V_Y\to  \pi^* V_Y  \otimes \Omega _{Z} (\log \, D^Z).$$
The same construction allow us to associate to a logarithmic Higgs
sheaf $(E, \theta)$ on $(X, D)$, a logarithmic Higgs sheaf $(\pi^*
E_Y, \theta')$ on $(Z, D^Z)$. Note that if $\theta$ is
nilpotent then  $\theta'$ is also nilpotent.

\medskip

\begin{Remark}
  One could naively hope that one can work with logarithmic
  connections on projective varieties by pulling back the $\LL
  _Y$-module $(V_{Y}, \nabla |_Y)$ via $\varphi : T=\PP (\cO_Y(-Y)
  \oplus \cO_Y)\to Y$.  Indeed, one has a short exact sequence
$$0\to \varphi ^*\Omega_Y (\log \, D^Y)\to \Omega _{T} (\log \, Y_0+Y_{\infty}+\pi^{-1}(D^Y))
{\to} \Omega _{T/Y} (\log \, Y_0+Y_{\infty}) =\cO_T\to 0,$$
where $Y_{\infty}=T-Z$ is image of the  infinity section. But 
if $p\ne 2$ then
$$0\to \varphi ^*\Omega_Y (\log \, D^Y)\to \varphi^* (\imath ^*\Omega_X(\log \, D)) \to \cO_T\to 0$$
defines a different extension class. This can be seen by computing the
extension class of both sequences after restricting to
$Y_{\infty}$. This forces us to deal with non-projective varieties,
where the difficulty is that one cannot directly apply Theorem
\ref{log-freeness}.
\end{Remark}

\medskip

Let us assume that the base field $k$ has characteristic $p$ and
$(X,D)$ is liftable to $W_2(k)$. Let us fix a lifting $(\tilde X,
\tilde D)$.  This lifting induces a lifting $(\tilde Y, \tilde D^Y)$
of $(Y, D^Y)$ to $W_2(k)$ and also a compatible lifting $(\tilde Z,
\tilde D^Z)$ of $(Z, D^Z)$ to $W_2(k)$.

The following lemma is functoriality of Cartier transforms in a
situation that is not covered by Theorem \ref{functoriality-Cartier}
(we do not even have a map $(Z, D^Z)\to (X,D)$).

\begin{Lemma} \label{compatibility}
  Let $(E, \theta)$ be a reflexive logarithmic Higgs sheaf on
  $(X,D)$ with a nilpotent Higgs field  of level less or equal to $p-1$. If $(V, \nabla)= C_{(\tilde
    X,\tilde D)}^{-1} (E, \theta)$ then we have a canonical isomorphism $(\pi ^* V_Y,  \nabla ')\simeq C_{(\tilde Z,  \tilde D^Z)}^{-1} (\pi^*E _Y, \theta ')$
and the diagram
$$\xymatrix{(\pi ^* V_Y,  \nabla ')\ar[d]^{\pi^*\Res_Y\nabla}
\ar[r]^-{\simeq}& C_{(\tilde Z,  \tilde D^Z)}^{-1} (\pi^*E _Y, \theta ')\ar[d]^{C_{(\tilde Z, \tilde D^Z)}^{-1}(\pi^*\Res _Y\theta)} \\
(\pi ^* V_Y,  \nabla ')\ar[r]^-{\simeq}& C_{(\tilde Z,  \tilde D^Z)}^{-1} (\pi^*E _Y, \theta ')\\
}$$
is commutative.
\end{Lemma}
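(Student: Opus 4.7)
The plan is to establish both parts of the lemma by making compatible choices of local Frobenius lifts on $\tilde X$ and $\tilde Z$. By Lemma \ref{Wahl}, $\theta'$ is obtained as the $\pi$-pullback of $\theta|_Y$ regarded as a $d_{\Omega_{\LL_Y}}$-Higgs field, and $\nabla'$ is similarly the $\pi$-pullback of $\nabla|_Y$; in particular, the residue of $\theta'$ along $Y_0$ equals $\pi^*\Res_Y\theta$, and residues along $\pi^{-1}(D_i^Y)$ are pullbacks of those of $\theta|_Y$, with analogous statements for $\nabla'$. Since the underlying module of $(V,\nabla)$ is \'etale locally isomorphic to the Frobenius pullback of $E$, both $\pi^*V_Y$ and $C^{-1}_{(\tilde Z,\tilde D^Z)}(\pi^*E_Y,\theta')$ are \'etale locally isomorphic to the Frobenius pullback of $\pi^*E_Y$, so what remains is to match their connections.

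To match the connections, I choose an affine open cover of $\tilde X$ admitting Frobenius lifts $\tilde F_X$ compatible with $\tilde D$. Restriction to $\tilde Y$ yields Frobenius lifts compatible with $\tilde D^Y$, and combining their pullback by the lift of $\pi$ with the canonical lift $s\mapsto s^p$ in the fiber direction of $\tilde Z\to \tilde Y$ produces Frobenius lifts on $\tilde Z$ compatible with $\tilde D^Z$. With these compatible choices, Schepler's explicit local formula for $C^{-1}$ (using divided-power-type series in the Higgs field, which terminate by the level $\le p-1$ assumption) can be computed on both sides. The comparison then reduces to two observations: in the non-fiber directions the formula on $\tilde Z$ is literally the $\pi$-pullback of the formula on $\tilde X$ restricted to $\tilde Y$; and in the fiber direction the lift $s\mapsto s^p$ produces exactly the residue contribution $\pi^*\Res_Y\theta$, which under the correspondence matches $\pi^*\Res_Y\nabla$ on the connection side. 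Since $C^{-1}$ is canonically independent of the choice of Frobenius lift, the local isomorphism glues to the required global one.

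For commutativity of the diagram, $\Res_Y\theta$ is an $\LL_Y^0$-module endomorphism of $E_Y$, so $\pi^*\Res_Y\theta$ is a Higgs endomorphism of $(\pi^*E_Y,\theta')$; applying the functor $C^{-1}_{(\tilde Z,\tilde D^Z)}$ yields an endomorphism of the Cartier transform which, under the constructed isomorphism, coincides with $\pi^*\Res_Y\nabla$. This is the general fact that $C^{-1}$ matches residue endomorphisms on both sides, which follows from functoriality (Theorem \ref{functoriality-Cartier}) together with the explicit local formula. The main technical obstacle is the explicit term-by-term verification of the two Schepler formulas under the chosen Frobenius lifts: although $(X,D)$ and $(Z,D^Z)$ have genuinely different global geometry, they share a common linearization along $Y$, and one must carefully track how the divided-power series in the local formula for $C^{-1}$ behaves under restriction to $\tilde Y$ followed by pullback by the lift of $\pi$ combined with $s\mapsto s^p$ in the fiber direction.
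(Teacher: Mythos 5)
Your proposal is correct and follows essentially the same route as the paper: choose local logarithmic coordinates and standard Frobenius lifts on $\tilde X$, take the product of the restricted lift on $\tilde Y$ with $t\mapsto t^p$ in the fiber direction as the lift on $\tilde Z$, identify the log forms via Lemma \ref{Wahl}, and compare the explicit local formulas for $C^{-1}$, with the residue diagram checked locally from $\Res_Y\nabla=F^*(\Res_Y\theta)$. The only slight imprecision is that in the exponential-twisting description the local connection is linear in $\theta$ (namely $\nabla_{can}+(\id\otimes\zeta)\circ F^*\theta$) and the truncated exponential series enters only in the gluing cocycles, but this does not affect the argument.
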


\begin{proof}
  Since $E$ is reflexive and $Y$ is smooth, $E_Y$ is torsion
  free. Since $\pi$ is flat, $\pi^*E_Y$ is also torsion free, so we
  can apply $C_{(\tilde Z, \tilde D^Z)}^{-1}$ to $(\pi^*E _Y, \theta')$.
We will use the notation introduced in proof of Theorem \ref{functoriality-Cartier}.

There exist an affine covering $\{\tilde U_{\alpha}\} _{\alpha\in I}$
of $\tilde X$ such that for each $\tilde U_{\alpha}$ we have a system
of logarithmic coordinates, i.e., $x_1,...,x_n$ such that $\tilde
D\cap \tilde U_{\alpha}$ is given by $\prod _{i=1}^{n_0}x_i=0$, with
$x_1=0$ giving $\tilde Y\cap \tilde U_{\alpha}$.  We can assume that
$\cO_{\tilde U_{\alpha}}(-\tilde Y)$ is trivial and choose for each
$\alpha$ its generator $t$.  Let us also choose standard log Frobenius
liftings $\tilde F_{U_{\alpha}}: \tilde U_{\alpha} \to \tilde
U_{\alpha}$ so that $\tilde F_{U_{\alpha}}^*(x_i)=x_i^p$.  
Then the projection $\tilde V_{\alpha}:= \pi^{-1}(\tilde U_{\alpha}
\cap \tilde Y)\to \tilde U_{\alpha} \cap \tilde Y$ corresponds to the
projection $(\tilde U_{\alpha} \cap \tilde Y)\times _{W_2(k)}\Spec
W_2(k)[t]\to \tilde U_{\alpha} \cap \tilde Y$.  We choose a
logarithmic Frobenius lifting of $(\tilde V _{\alpha}, \tilde D^Z\cap
\tilde V_{\alpha})$ to be $\tilde F_{V_{\alpha}} =\tilde F_{\tilde U_{\alpha} \cap \tilde
  Y}\times \tilde F_{\Spec k[t]}$, where $ \tilde F_{\Spec k[t]}$ is
given by $t\to t^p$. Note that $ \tilde D^Z\cap \tilde V_{\alpha}$ is
given by $\prod _{i=1}^{n_0}x_i t=0$.  We can locally write
$$\theta|_{U_{\alpha}}= \sum _{i=1}^{n_0} \theta_i\otimes \frac{dx_i}{x_i}+\sum _{i=n_0+1}^{n} \theta_i\otimes {dx_i}, $$
where $\theta_i: E_{U_{\alpha}}\to E_{U_{\alpha}}$ are some commuting endomorphisms.
This allows us to identify $C_{(\tilde Z,  \tilde D^Z)}^{-1} (\pi^*E _Y, \theta ')$.
Over each $\tilde V_{\alpha}$ we have $F^*(\pi^*(E_{U_\alpha\cap Y}))$ with the connection given by 
$$\nabla_{\alpha} := \nabla_{can}+({\id} \otimes \zeta_{\alpha})\circ (F^*\pi^*(\theta|_Y)),$$
where $\zeta_{\alpha}=\frac{d\tilde F_{V_{\alpha}}}{p}$. The isomorphism
from Lemma \ref{Wahl} is locally given by $\pi^*(\frac{dx_1}{x_1}|_Y)= \frac{dt}{t}$,
$\pi^*(\frac{dx_i}{x_i}|_Y)= \frac{dx_i}{x_i}$ for $2\le i\le n_0$ and $\pi^*({dx_i}|_Y)= dx_i$
for $n_0\le i\le n$. So we get 
$$\nabla_{\alpha} := \nabla_{can}+  F^*\pi^*(\theta_1|_Y)\otimes \frac{dt}{t}+
\sum _{i=2}^{n_0}F^* \pi^*(\theta_i|_Y)\otimes \frac{dx_i}{x_i}+\sum _{i=n_0+1}^{n} F^* \pi^*(\theta_i|_Y)\otimes 
x_i^{p-1}{dx_i}.$$
On the other hand, locally on $U_{\alpha}$, $(V, \nabla)$ can be identified with $F^*E_{U_\alpha}$
 with the connection given by 
$$\nabla|_{U_{\alpha}} := \nabla_{can}+({\id} \otimes \zeta_{\alpha}')\circ (F^*\theta),$$
where $\zeta_{\alpha}'=\frac{d\tilde F_{U_{\alpha}}}{p}$. Writing down this formula in local coordinates we get
$$\nabla|_{U_{\alpha}} = \nabla_{can}+\sum _{i=1}^{n_0}F^*\theta_i\otimes \frac{dx_i}{x_i}+
\sum _{i=n_0+1}^{n} F^* \theta_i\otimes x_i^{p-1}{dx_i}.$$ Using
equality $\nabla_{can}=\pi^*(\nabla_{can}|_{Y})$ and the above formulas
we get $\nabla_{\alpha} =\pi^*(\nabla|_{U_{\alpha}\cap Y})$. Checking
equality of gluing conditions is similar and left to the reader.

Finally, note that since the isomorphism $(\pi ^* V_Y,  \nabla ')\simeq 
C_{(\tilde Z,  \tilde D^Z)}^{-1} (\pi^*E _Y, \theta ')$
is functorial with respect to open embeddings $V_{\alpha}\subset Z$, it is sufficient to check the commutativity of 
the diagram only locally. In the local situation this follows easily from local equalities
$\Res_Y\nabla |_{U_{\alpha}}= F^*(\Res _Y\theta|_{U_{\alpha}})$.
\end{proof}

Let $(E, \theta)$ be a logarithmic Higgs sheaf on $(X,D)$ with a
nilpotent Higgs field.  Let $M_{\bullet}$ be the monodromy filtration
for $\Res _Y\theta$. Then each quotient $\Gr ^M_iE_Y$ is endowed with a
nilpotent logarithmic Higgs field $\theta_i^M$ on $(Y, D^Y)$.  

Let $(V, \nabla)$ be an object of $\Mic(X,D)$. Assume that the residue
$\Res _Y(\nabla)$ is nilpotent and let $W_{\bullet}$ be the monodromy
filtration for $\Res _Y\nabla$. Then each quotient $\Gr ^{W}_iV_Y$ is
endowed with a nilpotent integrable logarithmic connection $\nabla_i^W$ on $(Y,
D^Y)$.

\begin{Proposition}
  Let $(E, \theta)$ be a reflexive logarithmic Higgs sheaf on $(X,D)$
  with a nilpotent Higgs field of level less or equal to $p-1$ and let
  $(V, \nabla)= C_{(\tilde X,\tilde D)}^{-1} (E, \theta)$. Let
  $M_{\bullet}$ be the monodromy filtration for $\Res _Y\theta$ and
  let $W_{\bullet}$ be the monodromy filtration for $\Res _Y\nabla$.
  Then $(\Gr ^M_iE_Y, \theta_i^M)$ is a torsion free logarithmic Higgs
  sheaf on $(Y,D^Y)$ with a nilpotent Higgs field of level less or
  equal to $p-1$ and we have
$$(\Gr ^{W}_iV_Y, \nabla_i^W)=C_{(\tilde Y,\tilde D^Y)}^{-1}(\Gr ^M_iE_Y, \theta_i^M).$$ 
\end{Proposition}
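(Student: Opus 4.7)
The plan is to first verify the torsion freeness and the level bound, and then derive the compatibility with the Cartier transform by pulling back to the total space $Z$ of the normal bundle and invoking Lemma \ref{compatibility}, before descending back to $Y$ via the zero section. For the first assertion, since $E$ is reflexive on the smooth variety $X$ and $Y$ is a smooth divisor, the restriction $E_Y$ is a torsion free $\cO_Y$-module. The residue $N=\Res_Y\theta$ is a nilpotent endomorphism of the $\LL_Y^0$-module $E_Y$, so Lemma \ref{torsion-free-quotients} gives that every quotient $\Gr_i^ME_Y$ of the monodromy filtration is torsion free. Since the filtration consists of sub-$\LL_Y^0$-modules and $N$ induces the zero map on each graded piece, Lemma \ref{functor-Upsilon} identifies $(\Gr_i^ME_Y,\theta_i^M)$ with a logarithmic Higgs sheaf on $(Y, D^Y)$. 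Nilpotency of level at most $p-1$ is preserved under restriction from $X$ to $Y$ and under passage to sub-$\LL_Y^0$-modules and quotients, so $(\Gr_i^ME_Y,\theta_i^M)$ inherits this property from $(E,\theta)$.

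For the comparison with the Cartier transform, I pull everything back to $Z$ along $\pi:Z\to Y$. Lemma \ref{compatibility} furnishes a canonical isomorphism $(\pi^* V_Y,\nabla')\simeq C_{(\tilde Z,\tilde D^Z)}^{-1}(\pi^*E_Y,\theta')$ that intertwines $\pi^*\Res_Y\nabla$ and $C_{(\tilde Z,\tilde D^Z)}^{-1}(\pi^*\Res_Y\theta)$. Because the inverse Cartier transform is an equivalence of categories, it is compatible with canonical constructions performed on nilpotent endomorphisms; in particular, it takes the monodromy filtration of one residue to that of the other. Hence the filtration $\pi^*W_{\bullet}$ on $\pi^*V_Y$ corresponds to $\pi^*M_{\bullet}$ on $\pi^*E_Y$, and passing to the associated graded yields, for each $i$,
$$(\pi^*\Gr^W_iV_Y,\nabla'_i)\simeq C_{(\tilde Z,\tilde D^Z)}^{-1}(\pi^*\Gr_i^ME_Y,\theta'_i).$$

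Finally, on every graded piece the residue along $Y_0$ vanishes by construction of the monodromy filtration; combined with Lemma \ref{Wahl}, this means that the induced connection on $\pi^*\Gr^W_iV_Y$ and the induced Higgs field on $\pi^*\Gr_i^ME_Y$ lie in the subbundle $\pi^*\Omega_Y(\log \, D^Y)$, and are therefore the $\pi$-pullbacks of $\nabla_i^W$ and $\theta_i^M$ respectively. Pulling the previous isomorphism back along the zero section $s:Y\to Z$, which lifts compatibly to $W_2(k)$, and applying functoriality of the inverse Cartier transform (Theorem \ref{functoriality-Cartier}) produces the required identification
$$(\Gr^W_iV_Y,\nabla_i^W)\simeq C_{(\tilde Y,\tilde D^Y)}^{-1}(\Gr_i^ME_Y,\theta_i^M).$$
The main technical point is the compatibility of the Cartier transform with the monodromy filtration of the residues; the rest of the argument is formal once Lemma \ref{compatibility} has been established, together with the canonical nature of the monodromy filtration and functoriality. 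A secondary subtlety worth checking carefully is that the graded structures on $Z$ really do factor through the subsheaf $\pi^*\Omega_Y(\log\, D^Y)$ of $\Omega_Z(\log\, D^Z)$, but this is exactly what the sequence in Lemma \ref{Wahl} is designed to encode.
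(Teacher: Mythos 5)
Your argument is correct and follows essentially the same route as the paper: pull back to $Z$, combine Lemma \ref{compatibility} with the uniqueness characterization of the monodromy filtration to identify $\pi^*W_{\bullet}$ with the image of $\pi^*M_{\bullet}$ under $C^{-1}_{(\tilde Z,\tilde D^Z)}$, observe that the graded pieces have vanishing residue along $Y_0$ so the structures live over $(Z,\pi^{-1}(D^Y))$, and then restrict along the zero section using Theorem \ref{functoriality-Cartier}. Your additional verification of torsion freeness and the nilpotency level via Lemmas \ref{torsion-free-quotients} and \ref{functor-Upsilon} is consistent with what the paper leaves implicit.
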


\begin{proof}
  Note that $\pi^*M_{\bullet}$ is a filtration of $(\pi^*E _Y, \theta
  ')$ by logarithmic Higgs submodules on $(Z, D^Z)$. Moreover,
  quotients of this filtration are logarithmic Higgs modules on $(Z,
  \pi^{-1}(D^Y))$. Similarly, $\pi^*W_{\bullet}$ is a filtration of
  $(\pi^*V _Y, \nabla ')$ by integrable logarithmic connections on
  $(Z, D^Z)$ and the quotients are objects of $\Mic (Z,
  \pi^{-1}(D^Y))$.

Lemma \ref{compatibility} and uniqueness of the monodromy filtrations imply that
$$(\pi ^* W_i,  \nabla _i')= C_{(\tilde Z,  \tilde D^Z)}^{-1} (\pi^*M_i, \theta _i'),$$
where $\nabla'_i$ and $\theta'_i$ denote the restriction of $\nabla'$
and $\theta'$ to the corresponding subsheaves.
But this implies that
$$ \pi^* (\Gr ^{W}_iV_Y, \nabla_i^W) =  C_{(\tilde Z,  \tilde D^Z)}^{-1} \pi^* (\Gr ^M_iE_Y, \theta_i^M)=
C_{(\tilde Z, \widetilde {\pi^{-1}(D^Y)} }^{-1} \pi^* (\Gr ^M_iE_Y,
\theta_i^M).$$ Pulling back this equality by the zero section $s: (Y, D^Y)\to
(Z, \pi^{-1}(Y))$ and using functoriality of the Cartier transform, we get the
required assertion.
\end{proof}

\subsection{Nearby-cycles in positive characteristic}

Let $X$ be a smooth projective variety of dimension $n$ defined over
an algebraically closed field $k$ of characteristic $p$ and let $D$ be
a simple normal crossing divisor on $X$. In this subsection we 
assume also that $(X,D)$ is liftable to $W_2(k)$ and we fix a lifting $(\tilde
X, \tilde D)$.

\medskip

Let $H$ be an ample divisor on $X$ and let us fix a class $\mu \in
H^{2}_{\et} (X, \QQ_l)$ for some $l\ne p$.  We define the \emph{category $\HIG{\mu}
(X,D)$ of minimally semistable Higgs sheaves of slope $\mu$}
as the full subcategory of the category $\Hig (X, D)$ of logarithmic Higgs
sheaves on $(X,D)$, whose whose objects are pairs $(E,\theta)$, where
\begin{itemize}
\item $E$ is a locally free  $\cO_X$-module of rank $r\le p$, 
\item  $(E, \theta)$ is slope $H$-semistable,
\item  $c_1(E)=r\mu$ (i.e.,  the slope of $E$ is equal to $\mu$),
\item $\Delta (E) H^{n-2}=0$ (i.e., $E$ has a minimal possible discriminant).
\end{itemize}

By Theorem \ref{log-freeness} for any object $(E, \theta)$ of $\HIG{\mu}
(X,D)$ we have $c_m(E)=\binom{r}{m}\mu ^m$ for all $m\ge 1$.
Taking in Theorem \ref{curve-restriction} as $f$ identity, we see that
the above category does not depend on the choice of polarization $H$.

Unfortunately, $\HIG{\mu} (X,D)$ is not abelian as it does not contain
direct sums of objects.  However, by Theorem \ref{log-freeness} it
satisfies all other axioms of the abelian category.  In particular, it
contains kernels, images and cokernels (cf. \cite[Corollary 5]{La3})
and any morphism in this category admits a canonical decomposition.

\medskip

\medskip

Let $Y$ be an irreducible component $D$ and let us fix a class $\eta \in
H^{2}_{\et} (Y, \QQ_l)$ for some $l\ne p$. Let us define the category
$\ssMod{\LL_Y^0}{\eta}$ as the full subcategory of the category
$\Mod{\LL _Y^0}$ (defined in Subsection \ref{def-Lie}), whose objects $E$ satisfy the following conditions:
\begin{itemize}
\item as an $\cO_Y$-module $E$ is locally free of rank $r\le p$, 
\item  $E$ is slope $H_Y$-semistable (as an $\LL_Y^0$-module),
\item  $c_1(E)=r\eta$ and  $\Delta (E) H_Y^{n-3}=0$.
\end{itemize}
Replacing in the above definition $\LL_Y^0$  by $\LL_Y$
one can also define the category $\ssMod{\LL_Y}{\eta}$.

\begin{Theorem}\label{nearby-functor}
  Let $Y$ be an irreducible component $D$.  Then $\Phi _Y: \Hig (X,
  D)\to \Mod{\LL _Y^0}$ induces the functor $$\Phi_Y^{\mu}: \HIG{\mu}
  (X,D)\to \ssMod{\LL_Y^0}{\mu_Y},$$ where $\mu_Y$ is the image of
  $\mu$ under the restriction map $H^{2}_{\et} (X, \QQ_l)\to
  H^{2}_{\et} (Y, \QQ_l)$.
\end{Theorem}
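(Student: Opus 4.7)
The plan is to verify the four conditions defining $\ssMod{\LL_Y^0}{\mu_Y}$ for the pair $(E_Y, \theta|_Y)$: local freeness of rank $r \le p$, slope $H_Y$-semistability as an $\LL_Y^0$-module, $c_1(E_Y) = r\mu_Y$, and $\Delta(E_Y) H_Y^{n-3} = 0$. Three of these follow essentially formally. Local freeness of $E_Y$ of rank $r \le p$ is inherited from $E$, and $c_1(E_Y) = \imath^* c_1(E) = r\mu_Y$ by the definition of $\mu_Y$. For the discriminant, Theorem \ref{log-freeness} gives $c_m(E) = \binom{r}{m}\mu^m$ for all $m \ge 1$, from which a direct computation yields $\Delta(E) = 2rc_2(E) - (r-1)c_1(E)^2 = 0$ in $H^4_{\et}(X, \QQ_l)$. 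Restricting, $\Delta(E_Y) = \imath^*\Delta(E) = 0$, and in particular $\Delta(E_Y) H_Y^{n-3} = 0$.

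The main content is slope $H_Y$-semistability of $(E_Y, \theta|_Y)$ as an $\LL_Y^0$-module, which I would establish by restriction to curves using Theorem \ref{curve-restriction}. Fix an $\LL_Y^0$-submodule $F \subseteq E_Y$ --- that is, a subsheaf preserved under the restricted field $\theta|_Y: E_Y \to E_Y \otimes \imath^*\Omega_X(\log\, D)$ --- and choose a smooth curve $C \subset Y$ cut out as a complete intersection of $(n-2)$ sufficiently general members of $|m_i H|$ for $m_i \gg 0$. Arrange that $F_C$ is torsion free, that the Mehta--Ramanathan principle relates $\deg F_C$ to $\mu_H(F)$ through $\prod m_i$, and that the composed immersion $\nu: C \hookrightarrow Y \hookrightarrow X$ admits a good lifting to $W_2(k)$ compatible with the fixed lifting $(\tilde X, \tilde D)$. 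Theorem \ref{curve-restriction} then yields that the Higgs bundle $\nu^*(E, \theta)$ on $C$, whose Higgs field is the composition of $\imath^*\theta|_C$ with the natural map $\imath^*\Omega_X(\log\, D)|_C \to \Omega_C(\log\, D\cap C)$, is semistable. Since $F$ is preserved by $\theta|_Y$, its restriction $F_C$ is automatically preserved under this composed field, hence $F_C$ is a sub-Higgs sheaf of $\nu^*E$; Higgs semistability on $C$ gives $\mu(F_C) \le \mu(\nu^*E)$, which propagates back to $\mu_H(F) \le \mu_H(E_Y)$.

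The main obstacle is producing the curves $C$ together with a good $W_2(k)$-lifting of the immersion $\nu: C \hookrightarrow X$ compatible with $(\tilde X, \tilde D)$. Chaining $(n-2)$ Bertini-type hyperplane sections from the linear systems $|m_i H|$ while simultaneously maintaining smoothness of $C$, torsion-freeness of $F_C$, applicability of Mehta--Ramanathan, and the good-lifting condition of Definition \ref{good-lifting} is the only serious step beyond invoking Theorems \ref{log-freeness} and \ref{curve-restriction}. Standard obstruction theory for lifting general sections of ample linear systems on $\tilde X$ should carry this through, but the compatibility of the successive liftings with $(\tilde X, \tilde D)$ must be verified stage by stage.
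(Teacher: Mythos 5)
Your verification of the formal conditions (local freeness, rank, $c_1(E_Y)=r\mu_Y$, and $\Delta(E_Y)H^{n-3}=0$ via Theorem \ref{log-freeness} and restriction) is fine. The gap is in the main step, semistability of $(E_Y,\theta|_Y)$ as an $\LL_Y^0$-module, where your plan to invoke Theorem \ref{curve-restriction} along complete-intersection curves $C\subset Y$ cannot work. That theorem applies to a morphism of smooth log pairs $\nu:(C,D_C)\to(X,D)$ admitting a good lifting, and by definition this forces the support of $D_C$ to contain the support of $\nu^{-1}(D)$; since $C\subset Y\subset D$, the preimage $\nu^{-1}(D)$ is all of $C$, so no such log structure (and no good lifting in the sense of Definition \ref{good-lifting}, whose condition 3 would require $\tilde f^*(x_1)$ to be a nonzero monomial while $x_1$ vanishes on $\tilde Y$) exists. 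One also cannot dodge this by replacing $D$ with the components $D'$ not containing $C$, because $\theta$ has a pole along $Y$ and so is not a Higgs field for $(X,D')$. Relatedly, the ``composed Higgs field'' you describe does not exist: the residue sequence runs $0\to\Omega_Y(\log D^Y)\to\imath^*\Omega_X(\log D)\to\cO_Y\to0$, so there is no natural map $\imath^*\Omega_X(\log D)\to\Omega_Y(\log D^Y)$ unless the residue of $\theta$ along $Y$ already acts trivially, and $\LL_Y^0$-semistability is a condition on subsheaves preserved by the full restricted field including the residue direction, which a Higgs field on $C$ valued in $\Omega_C(\log\,\cdot)$ would forget. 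Note finally that the statement in the paper which does control curves lying inside $D$ (Theorem \ref{semipositivity-mu}, via the notion of strongly liftable morphisms) is itself proved by induction on dimension using the very theorem you are proving, so any appeal in that direction would be circular.

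The paper's actual argument is of a different nature and does not restrict to curves at all. For an arbitrary $\LL_Y^0$-submodule $E'\subset(E_Y,\theta|_Y)$ (first in the case of nilpotent $\theta$), it uses the inverse Cartier transform together with Lemma \ref{compatibility} --- compatibility of $C^{-1}$ with pullback to the total space $Z$ of the normal bundle of $Y$ --- to produce an $\LL_Y$-submodule $V'\subset(V_Y,\nabla|_Y)$ of $(V,\nabla)=C^{-1}_{(\tilde X,\tilde D)}(E,\theta)$, and then Simpson's filtration (locally free by Theorem \ref{log-freeness}) to get a submodule of $((E_1)_Y,\theta_1|_Y)$ whose slope is $p$ times that of $E'$. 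Iterating along the canonical Higgs--de Rham flow yields submodules $E'_m\subset((E_m)_Y,\theta_m|_Y)$ with $\mu_{H_Y}(E'_m)=p^m\mu_{H_Y}(E')$, and a boundedness argument (twisting by powers of $\det E$ and using Theorem \ref{boundedness}) bounds these slopes, forcing $\mu_{H_Y}(E')\le\mu_{H_Y}(E_Y)$. The general case is then reduced to the nilpotent one by Theorem \ref{deformation-to-system} and openness of semistability. So the missing idea in your proposal is precisely this Cartier-transform/Higgs--de Rham mechanism for bounding slopes of boundary submodules; the curve-restriction theorem cannot substitute for it on the boundary divisor.
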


\begin{proof}
  Let $(E, \theta)$ be an object of $\HIG{\mu} (X,D)$ and let us first
  assume that $\theta$ is nilpotent.
  Let $(V, \nabla)= C_{(\tilde X,\tilde D)}^{-1} (E, \theta)$. Let us
  denote by $S^{\bullet}$ (decreasing) Simpson's filtration on $(V,
  \nabla)$ and let $(E_1=\Gr_S(V),\theta _1)$ be the associated system
  of Hodge sheaves.

Let $E'$  be an $\LL _Y^0$-submodule of the $\LL_Y^0$-module $(E_Y, \theta |_Y)$.
Then by Lemma \ref{compatibility}
$$V'=s^*  C_{(\tilde Z,  \tilde D^Z)}^{-1} (E')\subset s^*C_{(\tilde Z,  \tilde D^Z)}^{-1} (\pi^*E _Y, \theta ')=
s^*(\pi ^* V_Y, \nabla ')=(V_Y, \nabla |_Y),$$ 
i.e., $V'$ is an $\LL _Y$-submodule of $(V_Y, \nabla |_Y)$.

By Theorem \ref{log-freeness} $E_1$ is locally free, so all $S^j$ are
locally free.  Thus we get an induced filtration $S^{\bullet}_Y$ of
$V_Y$ and
$$\Gr_{S_Y}^j (V_Y)=(\Gr_S^j V)_Y.$$
This filtration induces on $V'$ a filtration that we denote by abuse
of notation also by $S^{\bullet}_Y$. In this way we get an $\LL
_Y^0$-submodule $E_1'=\Gr_{S_Y} (V') \subset \Gr_{S_Y} (V_Y)=((E_1)_Y,
\theta_1|_Y)$. By construction we have $\mu_{H_Y}(E')=p \mu_{H_Y}(E_Y)$.

Now let us consider the canonical Higgs-de Rham sequence starting with $(E_0,\theta_0)=(E, \theta)$
(see Theorem \ref{Higgs-de-Rham})
$$ \xymatrix{
  & (V_0, \nabla _0)\ar[rd]^{\Gr _{S}}&& (V_1, \nabla _1)\ar[rd]^{\Gr _{S}}&\\
  (E_0, \theta _0)\ar[ru]^{C^{-1}}&&(E_1, \theta_1)\ar[ru]^{C^{-1}}&&...\\
}$$ Since $(E, \theta)$ is an object of $\HIG{\mu} (X,D)$, Theorem
\ref{log-freeness} implies that $(E_m, \theta _m)$ is an object of 
$\HIG{p^m \mu} (X,D)$ for all $m\ge 0$.  So we can apply the above described procedure at
all levels of the Higgs--de Rham sequence. This allows us to construct
a sequence $\{E'_m\} _{m\ge 0}$ of $\LL_Y^0$-modules such that
$E'_m\subset ((E_m)_Y, \theta_m|_Y)$ and $\mu_{H_Y}(E'_m)=p^m
\mu_{H_Y}(E')$.

Now we write $p^m=rs_m+q_m$ for some non-negative integers $s_m$ and
$0\le q_m<r$. Let us set $(G_m, \theta _{G_m}):=(E_m, \theta
_m)\otimes \det E^{-s_m}$. As in proof of Lemma \ref{stable-Higgs-de-Rham-sequence} we see
that the family of locally free slope $H$-semistable logarithmic Higgs
sheaves $\{(G_m, \theta _{G_m})\} _{m\ge 0}$ is bounded. This implies
that the family of sheaves $\{E_m\otimes \det E^{-s_m}\} _{m\ge 0}$ is
bounded and hence the family of their restrictions to $Y$ is
bounded. Therefore the numbers
$$\mu_{H_Y}(E'_m\otimes \det E_Y^{-s_m})=p^m \mu_{H_Y}(E')-rs_m \mu _{H_Y}(E_Y)= p^m (\mu_{H_Y}(E')-\mu_{H_Y} (E_Y))+q_m \mu _{H_Y} (E_Y)$$
are uniformly bounded from the above. Hence we get $\mu_{H_Y}(E')\le
\mu_{H_Y} (E_Y)$, i.e., the $\LL_Y^0$-module $(E_Y, \theta |_Y)$ is
slope $H_Y$-semistable.

\medskip

Now let us consider the general case.  Let $(E, \theta)$ be an object
of $\HIG{\mu} (X,D)$.  By Theorem \ref{deformation-to-system} there exists 
a decreasing Griffiths transverse filtration  $N^{\bullet }$ of $E$ such that the 
associated graded  $(E_0,\theta_0):= \Gr _N (E, \theta)$  is a slope $H$-semistable 
system of logarithmic Hodge sheaves (in particular, $\theta_0$ is nilpotent). 
Moreover, by Theorem \ref{log-freeness}
$E_0$ is locally free. By the first part of the proof we know that the
$\LL^0_Y$-module $((E_0)_Y, \theta_0|_Y)$ is semistable. Then by
openness of semistability $(E _Y, \theta|_Y )$ is also a semistable
$\LL_Y^0$-module.
\end{proof}

Let $\nMod{\LL_Y^0}{\eta}$ the the full subcategory of
$\ssMod{\LL_Y^0}{\eta}$, whose objects are $\LL_Y^0$-modules $E$ with
nilpotent $\Res _E$.  Replacing $\LL_Y^0$ by $\LL_Y$ we get the
definition of $\nMod{\LL _Y}{\eta}$.

\begin{Theorem} \label{passing-from-L-to-Hig}
  Let us fix a class $\eta\in H^{2}_{\et} (Y, \QQ_l)$ for some $l\ne
  p$.  The functor $\Upsilon ^0: \nilMod{\LL_Y^0}\to \Hig (Y,D^Y)$
  from Lemma \ref{functor-Upsilon} induces the functor $$\Upsilon
  ^0_{\eta}: \nMod{\LL_Y^0}{\eta}\to \HIG{\eta} (Y,D^Y).$$  In particular,
  for any object $E$ of $\nMod{\LL_Y^0}{\eta}$ we have for all $m\ge 1$
$$c_m(E)=\binom{r}{m}\eta ^m$$
in $H^{2m}_{\et}(Y, \QQ_l)$.
\end{Theorem}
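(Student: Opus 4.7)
The plan is to reduce to Theorem~\ref{strong-log-freeness} on the pair $(Y, D^Y)$, which inherits a $W_2(k)$-lifting from the lifting $(\tilde X, \tilde D)$. Fix $E \in \nMod{\LL_Y^0}{\eta}$ of rank $r\le p$, and let $M_\bullet$ denote the monodromy filtration associated to the nilpotent residue $N = \Res_E$ on $E$. By Lemma~\ref{torsion-free-quotients} each graded piece $\Gr_j^M E$ is torsion free, and by Lemma~\ref{semistable-quotients} each is slope $H_Y$-semistable with $\mu_{H_Y}(\Gr_j^M E) = \mu_{H_Y}(E)$. Since $N(M_j) \subset M_{j-2}$, the induced residue on $\Gr_j^M E$ vanishes, so by Lemma~\ref{functor-Upsilon} each piece acquires a canonical logarithmic Higgs structure $(\Gr_j^M E, \theta_j^M)$ on $(Y, D^Y)$; note that slope semistability as an $\LL_Y^0$-module coincides with slope semistability as a logarithmic Higgs sheaf on $(Y, D^Y)$, as any subobject inherits a vanishing residue.

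Next I show $c_1(\Gr_j^M E) = r_j \eta$ and $\Delta(\Gr_j^M E) H_Y^{n-3} = 0$ for every $j$, following the Hodge-index/Bogomolov argument of Lemma~\ref{JH-factors}. From equality of slopes, $\bigl(\tfrac{c_1 \Gr_i^M E}{r_i} - \tfrac{c_1 \Gr_j^M E}{r_j}\bigr) H_Y^{n-2} = 0$, so the Hodge index theorem on $Y$ yields $\bigl(\tfrac{c_1 \Gr_i^M E}{r_i} - \tfrac{c_1 \Gr_j^M E}{r_j}\bigr)^2 H_Y^{n-3} \le 0$. Combined with Bogomolov's inequality (Theorem~\ref{log-Bogomolov-with-lifting}) applied to each $(\Gr_j^M E, \theta_j^M)$ on $(Y, D^Y)$, and the identity
\begin{equation*}
\frac{\Delta(E) H_Y^{n-3}}{r} = \sum_j \frac{\Delta(\Gr_j^M E) H_Y^{n-3}}{r_j} - \frac{1}{r}\sum_{i<j} r_i r_j \Bigl(\tfrac{c_1 \Gr_i^M E}{r_i} - \tfrac{c_1 \Gr_j^M E}{r_j}\Bigr)^2 H_Y^{n-3},
\end{equation*}
the hypothesis $\Delta(E) H_Y^{n-3} = 0$ forces every summand on the right to vanish. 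Invoking the cohomological vanishing for divisor classes $M$ with $M \cdot H_Y^{n-2} = 0$ and $M^2 \cdot H_Y^{n-3} = 0$ (recalled before Lemma~\ref{JH-factors}), the classes $\tfrac{c_1 \Gr_i^M E}{r_i} - \tfrac{c_1 \Gr_j^M E}{r_j}$ vanish in $H^2_{\et}(Y, \QQ_l)$, giving $c_1(\Gr_j^M E) = r_j \eta$.

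With these data established, the filtration $M_\bullet$ on $E$ satisfies every hypothesis of Theorem~\ref{strong-log-freeness} on $(Y, D^Y)$: $E$ is reflexive with $\Delta(E) H_Y^{n-3} = 0$, and each factor is torsion free of rank $r_j \le p$, of common slope, carrying a slope $H_Y$-semistable logarithmic Higgs structure. The theorem then yields $c_m(E) = \binom{r}{m}\eta^m$ for all $m \ge 1$, which gives the Chern-class assertion, and in addition each $\Gr_j^M E$ is locally free with $c_m(\Gr_j^M E) = \binom{r_j}{m}\eta^m$. Thus $\Upsilon^0(E) = \bigoplus_j \Gr_j^M E$ is locally free of rank $r \le p$, slope $H_Y$-semistable (as a direct sum of semistables of equal slope), with $c_1 = r\eta$ and $\Delta H_Y^{n-3} = r \sum_j \Delta(\Gr_j^M E) H_Y^{n-3}/r_j = 0$, so it lies in $\HIG{\eta}(Y, D^Y)$. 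Functoriality is inherited from $\Upsilon^0$ since morphisms in $\nMod{\LL_Y^0}{\eta}$ commute with residues and hence preserve the monodromy filtration.

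The main obstacle is the simultaneous verification that all graded pieces share the same slope and minimal discriminant on $Y$; without this the filtration $M_\bullet$ does not feed into Theorem~\ref{strong-log-freeness}. The resolution rests on the delicate balancing of Bogomolov's inequality on $(Y, D^Y)$ against the Hodge index theorem, together with the $H^2_{\et}$-vanishing criterion for divisor classes.
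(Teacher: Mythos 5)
Your proof is correct and follows essentially the same route as the paper: Lemma \ref{semistable-quotients} gives semistability and equality of slopes of the graded pieces, the vanishing of the induced residues turns them into slope-semistable logarithmic Higgs sheaves on $(Y,D^Y)$ (since Higgs subsheaves are $\LL_Y^0$-submodules), and Theorem \ref{strong-log-freeness} applied on $(Y,D^Y)$ finishes. Your middle paragraph establishing $c_1(\Gr_j^M E)=r_j\eta$ and $\Delta(\Gr_j^M E)H_Y^{n-3}=0$ is redundant, as these are not hypotheses of Theorem \ref{strong-log-freeness} but part of its conclusion (and of its internal proof); otherwise the argument matches the paper's.
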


\begin{proof}
  Let $E$ be an object of $\nMod{\LL_Y^0}{\eta}$. We need to prove that
  every quotient $(\Gr _j^ME, \theta_j)$ of the monodromy filtration
  $M_{\bullet}$ of $E$ is locally free, slope $H_Y$-semistable with
  $c_m(\Gr _j^ME)=\binom{r_j}{m}\eta ^m$ for all $m\ge 1$, where
  $r_j=\rk \Gr_j^ME$. This also implies that
  $c_m(E)=\binom{r}{m}\eta ^m$ for all $m\ge 1$.

  By Lemma \ref{semistable-quotients} we know that every quotient $\Gr
  _j^ME$ of the monodromy filtration $M_{\bullet}$ of $E$ is slope
  $H_Y$-semistable (as an $\LL_Y^0$-module) with $\mu _{H_Y}(\Gr
  _j^ME)=\mu _{H_Y}(E)$. We also know that $\Gr _j^ME$ is endowed with a
  natural logarithmic Higgs field $\theta_j^M$ on $(Y, D^Y)$, coming
  from the $\LL _Y^0$-action and triviality of the residue of $\Gr
  _j^ME$. Since any logarithmic Higgs subsheaf of $(\Gr _j^ME,
  \theta_j)$ has a canonical structure of an $\LL_Y^0$-submodule, the
  pair $(\Gr _j^ME, \theta_j)$ is slope $H_Y$-semistable.  Therefore by
  Theorem \ref{strong-log-freeness} all quotients $\Gr _j^ME$ are
  locally free with $c_m(\Gr _j^ME)=\binom{r_j}{m}\eta ^m$.  
\end{proof}

\medskip

\begin{Corollary} \label{ss-filtration-of-restriction}
Any element in the essential image of the functor 
$$\Phi_Y^{0}: \HIG{\mu} (X,D)\to \ssMod{\LL_Y^0}{\mu _Y}.$$
has a filtration whose quotients are elements of $\HIG{\mu _Y} (Y,D^Y)$.
\end{Corollary}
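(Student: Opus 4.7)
Let $F$ be an element in the essential image of $\Phi_Y^\mu$, say $F = \Phi_Y^\mu(E,\theta) = (E_Y, \theta|_Y)$ for some $(E,\theta) \in \HIG{\mu}(X,D)$. The plan is to reduce to the case in which the Higgs field is nilpotent (so that the residue along $Y$ is automatically nilpotent) and then invoke Theorem~\ref{passing-from-L-to-Hig} to produce the filtration via the monodromy filtration of the residue.

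First I would apply Theorem~\ref{deformation-to-system} to produce a decreasing Griffiths-transverse filtration $N^\bullet$ of $E$ whose associated graded $(E_0,\theta_0) := \Gr_N(E,\theta)$ is a slope $H$-semistable system of logarithmic Hodge sheaves. Theorem~\ref{strong-log-freeness} guarantees that $E_0$ is locally free with the Chern classes required to place $(E_0,\theta_0)$ back in $\HIG{\mu}(X,D)$. Since $\theta_0$ strictly shifts the finite Hodge grading of $(E_0,\theta_0)$, both $\theta_0$ and its residue $\Res_Y \theta_0$ on $(E_0)_Y$ are nilpotent. Then Theorem~\ref{nearby-functor} places $\Phi_Y^\mu(E_0,\theta_0) = ((E_0)_Y, \theta_0|_Y)$ in $\nMod{\LL_Y^0}{\mu_Y}$, and the functor $\Upsilon^0_{\mu_Y}$ of Theorem~\ref{passing-from-L-to-Hig} then produces a filtration of $((E_0)_Y, \theta_0|_Y)$ whose monodromy-graded quotients are objects of $\HIG{\mu_Y}(Y,D^Y)$.

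To transport this filtration back to $F = (E_Y,\theta|_Y)$, I would use the Rees construction associated to $N^\bullet$, producing a flat family of logarithmic Higgs sheaves over $\AA^1$ whose special fiber at $t=0$ is $(E_0,\theta_0)$ and whose generic fiber is $(E,\theta)$. Restricting to $Y \times \AA^1$ yields a flat family in $\ssMod{\LL_Y^0}{\mu_Y}$ across which the residue endomorphism varies; the monodromy filtration on the special fiber should then spread out to a filtration of the entire family by $\LL_Y^0$-submodules whose restriction to the generic fiber supplies the required filtration of $F$.

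The hard part will be precisely this last deformation-theoretic step. While openness of semistability in $\ssMod{\LL_Y^0}{\mu_Y}$ (as exploited in the proof of Theorem~\ref{nearby-functor}) controls the slopes and discriminants of the filtration pieces on the generic fiber, one must still verify that the successive quotients on the generic fiber have vanishing residue so that they genuinely lie in $\HIG{\mu_Y}(Y,D^Y)$ rather than merely in $\ssMod{\LL_Y^0}{\mu_Y}$. Establishing this requires combining the behavior of the residue under the Rees specialization with a fiberwise application of Theorem~\ref{passing-from-L-to-Hig}, together with the boundedness and Chern-class rigidity provided by Theorems~\ref{boundedness} and~\ref{strong-log-freeness}.
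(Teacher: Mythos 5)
Your first three steps coincide with the paper's argument: deform $(E,\theta)$ to the associated graded system of Hodge sheaves $(E_0,\theta_0)=\Gr_N(E,\theta)$ via Theorem \ref{deformation-to-system}, observe that $\theta_0$ and hence $\Res_Y\theta_0$ is nilpotent so that $\Phi_Y^0(E_0,\theta_0)$ lies in $\nMod{\LL_Y^0}{\mu_Y}$, and apply Theorem \ref{passing-from-L-to-Hig} to get the monodromy filtration with quotients in $\HIG{\mu_Y}(Y,D^Y)$. Where you diverge --- and where the proposal has a genuine gap --- is the final ``transport back'' step. There is nothing to transport: since $E_0=\Gr_N E$ is locally free, every term $N^i$ is locally free, so the filtration $N^{\bullet}$ restricts exactly to a filtration $N^{\bullet}_Y$ of $M=E_Y$ itself, whose associated graded is precisely $\Phi_Y^0(\Gr_N(E,\theta))$. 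The required filtration of $M$ is then obtained directly by combining $N^{\bullet}_Y$ with the monodromy filtration of this associated graded; this is all that is meant by ``$N^{\bullet}$ induces an analogous filtration on $M$,'' and no deformation over $\AA^1$ is involved.

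The Rees-construction route you propose instead would not work as described. In a flat family over $\AA^1$, a filtration by subsheaves of the special fibre at $t=0$ does not spread out to the generic fibre: specialization of subsheaves goes in the opposite direction (a subsheaf over the generic point extends to a subsheaf of the special fibre, not conversely). So the monodromy filtration of $((E_0)_Y,\theta_0|_Y)$ cannot be propagated to $(E_Y,\theta|_Y)$ this way; nor should one expect $(E_Y,\theta|_Y)$ to carry a filtration by $\LL_Y^0$-submodules with quotients in $\HIG{\mu_Y}(Y,D^Y)$, since its residue need not even be nilpotent. You correctly flag that you have not verified that the residues vanish on the putative generic-fibre quotients --- but under the paper's reading this issue disappears, because the quotients of the filtration are literally the monodromy-graded pieces of the associated graded, which carry canonical Higgs structures on $(Y,D^Y)$ by Lemma \ref{functor-Upsilon} and lie in $\HIG{\mu_Y}(Y,D^Y)$ by Theorem \ref{passing-from-L-to-Hig}.
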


\begin{proof}
  Assume that an object $M$ of $\ssMod{\LL_Y^0}{\mu _Y}$ is isomorphic
  to $\Phi_Y^{0}(E, \theta)$ for some $(E,\theta)$ in $\HIG{\mu}
  (X,D)$.  In the last part of the proof of Theorem
  \ref{nearby-functor} we showed that there exists a decreasing
  Griffiths transverse filtration $N^{\bullet }$ of $(E,\theta)$ such
  that $\Phi_Y^{0} (\Gr _N (E, \theta))$ is an object of
  $\nMod{\LL_Y^0}{\eta}$.  In particular, by Theorem
  \ref{passing-from-L-to-Hig} every quotient in the monodromy
  filtration of $\Phi_Y^{0} (\Gr _N (E, \theta))$ is an element of
  $\HIG{\mu _Y} (Y,D^Y)$.  The proof finishes by remarking that
  $N^{\bullet}$ induces an analogous filtration on $M$.
\end{proof}

\section{Semistability and semipositivity}

In this section we prove Theorem \ref{semipositivity} and show some of its applications mentioned in the introduction.

\subsection{General results on semistability}

Unless otherwise stated, in this subsection $(X, D)$ stands for a smooth log pair defined over 
an algebraically closed field $k$of positive characteristic. We assume that the pair $(X, D)$ is liftable to $W_2(k)$
and we fix its lifting $(\tilde X, \tilde D)$.

 Let $C$ be a smooth projective curve and let $\nu: C\to X$ be a 
separable morphism. Let $D'$ be the sum of irreducible components 
of $D$ that do not contain $\nu (C)$ and let $D'_C=(\nu^{-1}(D'))_{\reduced}$.

\begin{Definition}\label{strongly-liftable}
We say that  $\nu : C\to (X,D)$ is \emph{strongly liftable to $W_2(k)$}, if there exists 
a good lifting $\tilde \nu : (\tilde C, \tilde D'_C) \to (\tilde X, \tilde D')$ (see
Definition \ref{good-lifting}) of $\nu: (C, D_C')\to (X, D')$ such that for every irreducible 
component $Y$ of $D$ containing $C$, $\tilde \nu$ factors through $\tilde C\to \tilde Y$.
\end{Definition}

In the above definition we write $\nu : C\to (X,D)$ to keep in mind
that being strongly liftable to $W_2(k)$ depends not only on $\nu :
C\to X$ but also on the choice of the normal crossing divisor $D$ (in fact, it also  depends on the choice of lifting 
 $(\tilde X. \tilde D)$ of $(X, D)$).

\begin{Theorem} \label{semipositivity-mu} Let $(E, \theta)$ be an
  object of $\HIG{\mu} (X,D)$.  Let $C$ be a smooth projective curve
  and let $\nu : C\to (X,D)$ be a morphism that is strongly
  liftable to $W_2(k)$. Then the induced $\Sym ^{\bullet} \nu^*
  T_X(\log \, D)$-module $\nu ^*E$ is semistable.  In particular, if
  $G$ is a subsheaf of the kernel of $\nu^*\theta : \nu^*E\to \nu^*
  E\otimes \nu^*\Omega_X(\log\, D)$ then $\mu(G)\le \mu (\nu^*E)$.
\end{Theorem}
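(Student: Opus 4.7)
The plan is to prove the assertion by induction on the number $k$ of irreducible components of $D$ that contain $\nu(C)$. The base case is $k=0$, for which $D'=D$; then Theorem \ref{curve-restriction} applied directly to the lifted morphism $\nu:(C,D_C)\to(X,D)$ gives that $\nu^*(E,\theta)$ is slope semistable as a logarithmic Higgs bundle on $(C,D_C)$. Now every sub-$\Sym^{\bullet}\nu^*T_X(\log\, D)$-module $F\subset\nu^*E$ becomes, after composition of $\nu^*\theta$ with the natural map $\nu^*\Omega_X(\log\, D)\to\Omega_C(\log\, D_C)$, a sub-logarithmic-Higgs-bundle of $\nu^*(E,\theta)$ on $(C,D_C)$, so $\mu(F)\le\mu(\nu^*E)$ as required.

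For the inductive step $k\ge 1$ we pick an irreducible component $Y$ of $D$ with $\nu(C)\subset Y$, and write $\nu=\imath_Y\circ\nu_Y$. By Theorem \ref{nearby-functor}, the restriction $(E_Y,\theta|_Y)$ is a semistable $\LL_Y^0$-module lying in $\ssMod{\LL_Y^0}{\mu_Y}$, and Corollary \ref{ss-filtration-of-restriction} supplies a filtration of $E_Y$ by $\LL_Y^0$-submodules with successive quotients $Q_j$ in $\HIG{\mu_Y}(Y,D^Y)$. Each $Q_j$ has trivial residue, so pulling the filtration back along $\nu_Y$ yields a filtration of $\nu^*E=\nu_Y^*E_Y$ by sub-$\Sym^{\bullet}\nu^*T_X(\log\, D)$-modules with quotients $\nu_Y^*Q_j$. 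By dualizing the residue sequence of Lemma \ref{Wahl}, the $\nu^*T_X(\log\, D)$-action on $\nu_Y^*Q_j$ factors through the surjection $\nu^*T_X(\log\, D)\twoheadrightarrow\nu_Y^*T_Y(\log\, D^Y)$ (precisely because $\Res$ vanishes on $Q_j$), so sub-$\Sym^{\bullet}\nu^*T_X(\log\, D)$-modules of $\nu_Y^*Q_j$ coincide with sub-$\Sym^{\bullet}\nu_Y^*T_Y(\log\, D^Y)$-modules. The induction hypothesis, applied to each $Q_j\in\HIG{\mu_Y}(Y,D^Y)$ and the morphism $\nu_Y:C\to(Y,D^Y)$ (where the number of relevant components has dropped by one), then gives semistability of every $\nu_Y^*Q_j$; and since all $Q_j$ share the slope determined by $\mu_Y$ on $Y$, all $\nu_Y^*Q_j$ share the common slope $\mu(\nu^*E)$ on $C$. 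Hence $\nu^*E$ is an iterated extension of equal-slope semistable $\Sym^{\bullet}\nu^*T_X(\log\, D)$-modules, and so is itself semistable. The ``in particular'' assertion then follows immediately, since any $G\subset\ker\nu^*\theta$ is automatically a sub-$\Sym^{\bullet}\nu^*T_X(\log\, D)$-module.

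The main obstacle I anticipate is verifying that strong liftability of $\nu:C\to(X,D)$ descends to strong liftability of $\nu_Y:C\to(Y,D^Y)$. Concretely, one must show that the given good lifting $\tilde\nu:(\tilde C,\tilde D_C)\to(\tilde X,\tilde D')$ factors through the canonical lifting $\tilde Y\subset\tilde X$ (determined by the chosen lifting $\tilde D\supset\tilde Y$), yielding a good lifting $\tilde\nu_Y:(\tilde C,\tilde D_C)\to(\tilde Y,\tilde D'\cap\tilde Y)$ of $\nu_Y$. This ought to follow from the fact that $\tilde X$ together with $\tilde D$ determines a canonical lifting of each component $Y$ and that $\nu$ already factors through $Y$ modulo $p$, but it may require a small sharpening of the notion of good lifting to ensure the lifted map is actually forced into $\tilde Y$ rather than a neighbouring deformation.
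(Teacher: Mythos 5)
Your overall strategy is the paper's: dispose of the transverse case by Theorem \ref{curve-restriction}, and when $\nu(C)$ lies in a component $Y$ pass to the nearby-cycles picture on $(Y,D^Y)$ and induct. (The paper inducts on $\dim X$ rather than on the number of components containing $\nu(C)$; since each nearby-cycles step drops the dimension by one, this is the same induction.) There is, however, a genuine gap in your inductive step. You assert that Corollary \ref{ss-filtration-of-restriction} supplies a filtration of $E_Y$ \emph{by $\LL_Y^0$-submodules} with quotients $Q_j$ in $\HIG{\mu_Y}(Y,D^Y)$. It does not: the filtration constructed there is the Griffiths-transverse filtration $N^{\bullet}$ of Theorem \ref{deformation-to-system} (restricted to $Y$) refined by the monodromy filtration of the residue, and the Griffiths-transverse part only satisfies $\theta(N^i)\subset N^{i-1}\otimes\Omega_X(\log\, D)$ --- its steps are not preserved by the Higgs field, so they are not $\LL_Y^0$-submodules and their individual graded pieces carry no Higgs structure. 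Consequently $\nu^*E$ is \emph{not} an iterated extension of the $\nu_Y^*Q_j$ as $\Sym^{\bullet}\nu^*T_X(\log\, D)$-modules, and your closing step (``an iterated extension of equal-slope semistable modules is semistable'') does not apply. The paper's proof avoids this by first replacing $(E,\theta)$ by $\Gr_N(E,\theta)$ --- a \emph{degeneration} of the module structure, not a filtration by submodules --- and invoking openness of semistability to carry semistability of $\nu^*\Gr_N(E,\theta)$ back to $\nu^*(E,\theta)$; only after this reduction to nilpotent $\theta$ is the remaining filtration (the monodromy filtration of Theorem \ref{passing-from-L-to-Hig}) genuinely by $\LL_Y^0$-submodules, so that pulling it back along $\nu$ and arguing by extensions (or openness again) is legitimate. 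Your argument is repaired by inserting this reduction at the start, before restricting to $Y$.

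The obstacle you flag at the end --- that strong liftability of $\nu:C\to(X,D)$ must descend to strong liftability of $\nu:C\to(Y,D^Y)$ before the inductive hypothesis can be invoked on the quotients living on $(Y,D^Y)$ --- is a real point: nothing in Definition \ref{strongly-liftable} forces the chosen lifting $\tilde\nu$ to factor through a lifting of $Y$. The paper's induction uses exactly the same descent without comment, so this is a shared issue rather than a defect specific to your route; but you are right that it needs to be addressed, either by strengthening the liftability hypothesis or by checking that the data entering the inductive application (a good lifting of $(C,D_C)\to(Y,(D^Y)')$) can be produced from the given one.
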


\begin{proof}
  The proof is by induction on the dimension of $X$. In dimension
  $n=1$ the required assertion follows from Theorem
  \ref{curve-restriction}, so let us assume that $n\ge 2$.  As in the
  proof of Theorem \ref{nearby-functor} there exists a decreasing
  Griffiths transverse filtration $N^{\bullet }$ of $E$ such that the
  associated graded $(E_0,\theta_0):= \Gr _N (E, \theta)$ is an object
  of $\HIG{\mu} (X,D)$ with nilpotent $\theta_0$. Since $\Gr _{\nu ^* N}
  (\nu^*E, \nu^*\theta)= (\nu^*E_0, \nu^*\theta_0)$, by openness of
  semistability, if the $\Sym ^{\bullet} \nu^* T_X(\log \, D)$-module
  $\nu ^*E_0$ is semistable then the $\Sym ^{\bullet} \nu^* T_X(\log
  \, D)$-module $\nu ^*E$ is semistable.  So in the following we can
  assume that $\theta$ is nilpotent.

  If $\nu (C)$ is not contained in $D$ then   $\nu^*(E, \theta)$ is
  semistable by Theorem \ref{curve-restriction} (for this we do not
  need nilpotence of $\theta$). Since any $\Sym ^{\bullet} \nu^*
  T_X(\log \, D)$-submodule of $\nu^*E$ defines a Higgs subsheaf of
  $\nu^*(E, \theta)$, this implies that $\nu ^*E$ is semistable as a
  $\Sym ^{\bullet} \nu^* T_X(\log \, D)$-module.

  If $\nu (C)$ is contained in $D$ then we choose an irreducible component $Y$ 
  of $D$ containing $\nu (C)$ and as before we set $D^Y =(D-Y)|_Y$. By definition 
  of strong liftability, the morphism $C\to (Y, D^Y)$ is also strongly liftable to $W_2 (k)$.
  By Theorem
  \ref{nearby-functor} $E':=\Phi^0_Y (E, \theta)$ is an element of
  $\nMod{\LL_Y^0}{\mu_Y}$.  By Theorem \ref{passing-from-L-to-Hig} we
  know that $E'$ has a filtration $M_{\bullet}$ whose associated
  graded $E''=\Gr^M(E')$ is an element of $\HIG{\mu _Y} (Y,D^Y)$.
  Hence by the induction assumption the induced $\Sym ^{\bullet} \nu^*
  T_Y(\log \, D^Y)$-module $\nu ^*E''$ is semistable.  Equivalently,
  $\nu ^*E''$ is semistable as a $\nu^*\LL_Y^0$-module.
  
But $\nu^*M_{\bullet}$ is a filtration of $\nu^* E'$ by $\nu^*\LL_Y^0$-submodules and the 
associated graded is equal to  $\nu ^*E''$  (here we use the fact that $E''$ is locally free).
So by openness of semistability $\nu^* E'$ is semistable as a $\nu^*\LL_Y^0$-module.
This is equivalent to saying that  $\nu ^*E$ is semistable as a $\Sym ^{\bullet}
  \nu^* T_X(\log \, D)$-module, which finishes the induction step.
  
 The last part of the theorem follows from the fact that $\ker \nu^*\theta$ with trivial action is a  $\Sym ^{\bullet}
  \nu^* T_X(\log \, D)$-submodule of $\nu ^*E$.
\end{proof}

\begin{Corollary} \label{Bru-p}
 Let $(E, \theta)$ be an object of $\HIG{0} (X,D)$. 
If $E'$ is a locally split subsheaf of $E$ contained in the kernel of $\theta$
then its dual $(E')^*$ is $W_2$-nef.
\end{Corollary}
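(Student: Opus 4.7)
The plan is to reduce directly to Theorem \ref{semipositivity-mu} applied with $\mu = 0$, after unpacking what $W_2$-nefness means when tested against a curve.

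First I would reformulate $W_2$-nefness in terms of subsheaves. Let $\nu: C\to (X,D)$ be a morphism from a smooth projective curve that is strongly liftable to $W_2(k)$. Since $E'$ is locally split in $E$, it is in particular locally free, so $\nu^*E'$ is a locally free subsheaf of $\nu^*E$ and $\nu^*(E')^*$ is canonically identified with $(\nu^*E')^*$. On the curve $C$, any torsion-free quotient $Q$ of $(\nu^*E')^*$ is locally free and, by dualizing, corresponds to a saturated subsheaf $Q^*\hookrightarrow \nu^*E'$ with $\deg Q = -\deg Q^*$; torsion quotients have non-negative degree automatically. Hence $(E')^*$ is $W_2$-nef if and only if for every such $\nu$ every subsheaf of $\nu^*E'$ has degree $\le 0$ (equivalently, $\mu_{\max}(\nu^*E')\le 0$).

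Second, I would exhibit any subsheaf of $\nu^*E'$ as a $\Sym^\bullet \nu^*T_X(\log\, D)$-submodule of $\nu^*E$. The inclusion $E'\subset \ker\theta$ pulls back to $\nu^*E'\subset \ker\nu^*\theta$, so for any $F\subset \nu^*E'$ we have $\nu^*\theta(F)=0$, and $F$ is automatically a $\Sym^\bullet\nu^*T_X(\log\, D)$-submodule (the induced action on $\nu^*E'$ is trivial). Now Theorem \ref{semipositivity-mu} applied with $\mu = 0$ says that $\nu^*E$ is semistable as a $\Sym^\bullet\nu^*T_X(\log\, D)$-module; since $c_1(E)=0$ (as $(E,\theta)\in\HIG{0}(X,D)$) we have $\mu(\nu^*E) = \deg \nu^*\det E = 0$. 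Therefore $\mu(F)\le 0$, i.e.\ $\deg F\le 0$, for every such $F$.

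Combining the two steps gives $\mu_{\max}(\nu^*E')\le 0$ for every strongly liftable $\nu$, which is exactly $W_2$-nefness of $(E')^*$. There is no real obstacle here beyond bookkeeping: the only point requiring care is the passage from ``subsheaves of $\nu^*E'$ have non-positive degree'' to ``quotients of $\nu^*(E')^*$ have non-negative degree'', which on a curve is harmless because both torsion and torsion-free pieces are handled separately and saturations only increase degrees.
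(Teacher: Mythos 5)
Your proposal is correct and follows essentially the same route as the paper: pull back, observe $\nu^*E'\subset\ker\nu^*\theta$ so that every subsheaf of $\nu^*E'$ is a $\Sym^{\bullet}\nu^*T_X(\log\, D)$-submodule, apply Theorem \ref{semipositivity-mu} with $\mu=0$, and dualize. The only difference is that you spell out the (standard, harmless) bookkeeping between nonpositive degrees of subsheaves of $\nu^*E'$ and nonnegative degrees of quotients of $(\nu^*E')^*$, which the paper compresses into ``passing to the dual.''
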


\begin{proof}
If $E'$ is a locally split subsheaf of $E$ then for any  smooth projective curve $C$ and any 
morphism  $\nu : C\to  X$,  $\nu^*E'$ is a subsheaf of $\nu^*E$. Moreover, the image of 
$\nu^*(\ker \theta)$ in $\nu^*E$ is contained in $\ker \nu^*\theta$, so $\nu^*E'\subset \ker \nu^*\theta$. 
So if $\nu$ is separable and liftable to $W_2(k)$, then by the above theorem any subsheaf of $\nu^*E'$ has a nonpositive degree.
Passing to the dual of $\nu^*E'$, we get the required assertion.
\end{proof}

A standard spreading out arguments show that Theorem
\ref{semipositivity-mu} implies the following result:

\begin{Theorem} \label{semipositivity-0} 
  Let $(E, \theta)$ be a locally free logarithmic Higgs sheaf on a
  smooth log pair $(X,D)$ defined over an algebraically closed field
  of characteristic zero. Assume that it has vanishing Chern classes
  in $H^{2*}(X, \QQ)$ and it is slope semistable with respect to some
  ample polarization.  Let $\nu : C\to X$ be any morphism from some
  smooth projective curve.  Then the induced $\Sym ^{\bullet} \nu^*
  T_X(\log \, D)$-module $\nu ^*E$ is semistable.  In particular, if
  $G$ is a subsheaf of the kernel of $\nu^*\theta : \nu^*E\to \nu^*
  E\otimes \nu^*\Omega_X(\log\, D)$ then $\deg G\le 0$.
\end{Theorem}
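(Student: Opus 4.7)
The plan is to deduce Theorem \ref{semipositivity-0} from Theorem \ref{semipositivity-mu} by spreading the characteristic-zero data to a finitely generated $\ZZ$-scheme and applying the positive-characteristic result at closed fibres of arbitrarily large residual characteristic.

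First I would choose a finitely generated $\ZZ$-subalgebra $R \subset k$ over which the whole package $(X, D, H, E, \theta, C, \nu)$ is defined, then invert finitely many elements of $R$ so that $S := \Spec R$ is smooth over $\Spec \ZZ$ and supports a model consisting of a smooth projective $\pi : \mathcal{X} \to S$, a relative simple normal crossing divisor $\mathcal{D} \subset \mathcal{X}$, a relatively ample $\mathcal{H}$, a relatively locally free logarithmic Higgs sheaf $(\mathcal{E}, \Theta)$ on $(\mathcal{X}, \mathcal{D})/S$, a smooth projective curve $\mathcal{C} \to S$ and an $S$-morphism $\tilde{\nu} : \mathcal{C} \to \mathcal{X}$ whose geometric generic fibre recovers the original data.

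For a closed point $s \in S$ with residue characteristic $p$, formal smoothness of $R$ over $\ZZ$ lifts the surjection $R \to k(s)$ to a ring map $R \to W_2(k(s))$, so base-changing the model yields a $W_2(k(s))$-lift of $(X_s, D_s)$ together with a good lift of $\nu_s$; in particular $\nu_s$ is strongly liftable to $W_2(k(s))$ in the sense of Definition \ref{strongly-liftable}. For all but finitely many $s$ I can also arrange $p > \rk E$, slope $H_s$-semistability of $(E_s, \theta_s)$ (by openness of semistability in flat families), and vanishing of the étale Chern classes $c_i(E_s) \in H^{2i}_{\et}(X_s, \QQ_l)$ for $l \ne p$; the last point comes from specialization of Chern classes combined with the Betti-to-$\ell$-adic comparison isomorphism, which turns the assumed vanishing in $H^{2*}(X, \QQ)$ into vanishing in $H^{2*}_{\et}$ of the geometric generic fibre, whence in all nearby fibres. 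At such an $s$ we have $(E_s, \theta_s) \in \HIG{0}(X_s, D_s)$, and Theorem \ref{semipositivity-mu} gives that $\nu_s^* E_s$ is semistable as a $\Sym^{\bullet} \nu_s^* T_{X_s}(\log \, D_s)$-module.

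To conclude in characteristic zero I would argue by contradiction: a destabilizing sub-$\Sym^{\bullet} \nu^* T_X(\log \, D)$-module of $\nu^* E$ spreads out, after further shrinking $S$, to a sub-$\Sym^{\bullet} \tilde{\nu}^* T_{\mathcal{X}/S}(\log \, \mathcal{D})$-module $\mathcal{F} \subset \tilde{\nu}^* \mathcal{E}$, whose fibrewise rank and degree are locally constant on $S$; hence $\mathcal{F}_s$ would violate the positive-characteristic semistability already proven at closed points $s$ of large residual characteristic. The ``in particular'' assertion about $\deg G \le 0$ for $G \subset \ker \nu^* \theta$ follows in the same way by spreading out a putative $G$ of positive degree. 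The main technical obstacle will be ensuring that all four hypotheses of Theorem \ref{semipositivity-mu} (rank $\le p$, slope semistability, $W_2$-liftability of both $(X_s, D_s)$ and $\nu_s$, and vanishing of étale Chern classes of $E_s$) hold simultaneously at infinitely many closed points of $S$ of common large residual characteristic. Transporting vanishing of Chern classes from singular cohomology of the complex model down to $\ell$-adic cohomology of a characteristic-$p$ fibre is the most delicate step and relies on compatibility of Chern classes with the comparison and specialization maps together with smooth and proper base change.
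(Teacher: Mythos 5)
Your proposal is correct and follows exactly the route the paper intends: the paper offers no proof beyond the sentence that ``a standard spreading-out argument'' deduces Theorem \ref{semipositivity-0} from Theorem \ref{semipositivity-mu}, and your write-up supplies precisely those standard details (spreading out over a smooth $\ZZ$-model, lifting to $W_2$ via formal smoothness to get strong liftability of $\nu_s$, transporting the Chern-class vanishing through the Betti--$\ell$-adic comparison and smooth proper base change, and spreading out a putative destabilizing submodule to reach a contradiction at closed fibres of large residue characteristic). The only cosmetic points are that one should pass to $W_2(\overline{k(s)})$ over the algebraic closure of the finite residue field before invoking Theorem \ref{semipositivity-mu}, and dispose of the trivial case of a constant $\nu$ separately.
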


\begin{Remark}
\begin{enumerate}
\item For the first part of Theorem \ref{semipositivity-0} one can replace the assumption
  that $E$ has vanishing Chern classes with assumption that
  $r^ic_m(E)=\binom{r}{m}(c_1(E))^m$ for all $m\ge 2$ in $H^{2*}(X,
  \QQ)$.
\item In Theorem \ref{semipositivity-0} the assertion holds if we
  replace curve $C$ by any smooth polarized variety. This immediately
  follows from the fact that semistability on a general complete
  intersection curve implies semistability on the original variety.
\item  A posteriori one can see that it is possible to obtain proof of the
  above theorem without passing to positive characteristic. In case
  $(E,\theta)$ comes from a real graded-polarized family of mixed
  Hodge structures it is possible to use Mochizuki's version of
  Simpson's correspondence to adapt Brunebarbe's proof \cite[Theorem
  4.5]{Br1} to obtain the above theorem. This strategy can be also
  generalized to deal with arbitrary systems of logarithmic Hodge
  bundles. The general case needs a logarithmic version of
  \cite[Theorem 2]{Si} (cf. Theorem \ref{log-freeness}), which again
  can be obtained using Mochizuki's results. Passing to non-zero $\mu$
  as in Theorem \ref{semipositivity-mu} can de done using
  Theorem \ref{0-mu}.
\end{enumerate}
\end{Remark}

Theorem \ref{semipositivity-0} implies the following result
generalizing \cite[Theorem 1.2]{Br2} from polystable to the semistable
case:

\begin{Corollary} \label{semipositivity-cor-0} Let $(E, \theta)$ be a
  locally free logarithmic Higgs sheaf on a smooth log pair $(X,D)$
  defined over an algebraically closed field of characteristic
  zero. Assume that it has vanishing Chern classes in $H^{2*}(X, \QQ)$
  and it is slope semistable with respect to some ample
  polarization. If $E'$ is a locally split subsheaf of $E$ contained
  in the kernel of $\theta$ then its dual $(E')^*$ is nef.
\end{Corollary}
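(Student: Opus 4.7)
The plan is to mimic the proof of Corollary \ref{Bru-p} almost verbatim, substituting the characteristic-zero input Theorem \ref{semipositivity-0} for the positive-characteristic Theorem \ref{semipositivity-mu}. Recall that in characteristic zero a locally free sheaf $F$ on a smooth projective variety is nef if and only if, for every morphism $\nu : C \to X$ from a smooth projective curve, every quotient of $\nu^* F$ has non-negative degree; equivalently, every subsheaf of $\nu^* F^*$ has non-positive degree. So the task reduces to bounding degrees of subsheaves of $\nu^* E'$ for arbitrary such $\nu$.

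Fix a morphism $\nu : C \to X$ with $C$ a smooth projective curve. Since $E'$ is a locally split subsheaf of $E$, the pullback $\nu^* E'$ remains a subsheaf of $\nu^* E$ (this is exactly where local splitness is used). The inclusion $E' \subset \ker \theta$ is preserved under pullback: the composition $\nu^* E' \hookrightarrow \nu^* E \stackrel{\nu^*\theta}{\longrightarrow} \nu^* E \otimes \nu^* \Omega_X(\log D)$ vanishes, and a fortiori so does its further composition with the canonical map $\nu^* \Omega_X(\log D) \to \Omega_C(\log \nu^{-1}(D)_{\reduced})$. Thus $\nu^* E'$ lies inside the kernel of the induced logarithmic Higgs field on $\nu^* E$.

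Now I would invoke the last assertion of Theorem \ref{semipositivity-0}: every subsheaf $G$ of $\ker(\nu^*\theta)$ satisfies $\deg G \le 0$. Applied to every subsheaf of $\nu^* E'$ and then dualized, this shows that every quotient of $\nu^* (E')^*$ has non-negative degree; since $\nu$ was arbitrary, $(E')^*$ is nef. I expect essentially no obstacle here: the argument is a direct specialization of the main characteristic-zero semipositivity theorem, precisely parallel to how Corollary \ref{Bru-p} is derived from Theorem \ref{semipositivity-mu}, and the only delicate verification (that the kernel condition survives pullback thanks to local splitness of $E'\subset E$) is purely formal.
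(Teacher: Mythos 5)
Your proposal is correct and is essentially the paper's intended argument: the paper derives Corollary \ref{semipositivity-cor-0} directly from Theorem \ref{semipositivity-0}, and the details you supply (local splitness preserving the inclusion under pullback, the kernel condition surviving pullback, then dualizing) are exactly the steps written out for the characteristic-$p$ analogue, Corollary \ref{Bru-p}. No gaps.
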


\subsection{Geometric applications}

In this subsection we give several geometric applications of Corollary \ref{Bru-p} in more or 
less increasing degree of generality showing how to adjust some arguments.
We fix the following notation.  Let $X$ and $Y$ be smooth projective varieties defined over an algebraically closed field $k$ of characteristic $p$ and let $f:X\to Y$ be a  surjective $k$-morphism of relative dimension $d$.
Moreover, $i$ and $j$ are arbitrary non-negative integers.

\begin{Corollary}\label{canonical-smooth}
Assume that $f$ is smooth $d<p $ and there exists  a lifting 
  $\tilde f: \tilde X\to \tilde Y$ of $f$ to
  $W_2(k)$. Then $(R^if_*^{dR}\cO_X, \nabla _{GM})$, where $\nabla
  _{GM}$ is the Gauss-Manin connection, is a locally free semistable
  sheaf with an integrable connection and vanishing Chern classes. In
  particular, $R^jf_*\omega_{X/Y}$ is a $W_2$-nef locally free sheaf on
  $Y$.
\end{Corollary}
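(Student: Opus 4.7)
My plan is to identify $(R^if_*^{dR}\cO_X,\nabla_{GM})$ with the inverse Cartier transform of the relative Hodge bundle equipped with the Kodaira--Spencer Higgs field, show that this Hodge bundle lies in $\HIG{0}(Y,0)$, and then deduce the $W_2$-nefness of $f_*\omega_{X/Y}$ by combining Corollary \ref{Bru-p} with relative Serre duality. Since $d<p$ and $\tilde f$ lifts $f$ to $W_2(k)$, the relative Deligne--Illusie theorem yields degeneration at $E_1$ of the spectral sequence $E_1^{p,q}=R^qf_*\Omega^p_{X/Y}\Rightarrow R^{p+q}f_*^{dR}\cO_X$, so $R^if_*^{dR}\cO_X$ is locally free and its Hodge filtration $F^{\bullet}$ is locally split with $\Gr^p_F\simeq R^{i-p}f_*\Omega^p_{X/Y}$. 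By Katz's theorem on the $p$-curvature of $\nabla_{GM}$, compatible with Theorem \ref{log-smooth-OV-correspondence}, one obtains a canonical isomorphism
$$(R^if_*^{dR}\cO_X,\nabla_{GM})\simeq C^{-1}_{(\tilde Y,0)}(E,\theta),$$
where $(E,\theta):=\bigl(\bigoplus_{p+q=i}R^qf_*\Omega^p_{X/Y},\,\mathrm{KS}\bigr)$ is a system of Hodge sheaves on $Y$.

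The next step is to verify that $(E,\theta)\in\HIG{0}(Y,0)$. Local freeness of each summand is immediate from smoothness of $f$, and self-duality of relative de Rham cohomology under the cup-product pairing with the relative trace forces $c_1(E)=0$. Slope semistability together with the vanishing $\Delta(E)H^{n-2}=0$ follow from the preperiodicity of the canonical Higgs--de Rham flow applied to $(E,\theta)$: by the Cartier isomorphism the graded $\Gr_F C^{-1}_{(\tilde Y,0)}(E,\theta)$ is canonically identified with the Frobenius twist $(E^{(1)},\theta^{(1)})$, so all iterates of the flow coincide with Frobenius twists of $(E,\theta)$. The boundedness argument from the proof of Lemma \ref{stable-Higgs-de-Rham-sequence} then yields the required discriminant vanishing, and Theorem \ref{log-freeness} upgrades this to membership in $\HIG{0}(Y,0)$; the stated properties of $(R^if_*^{dR}\cO_X,\nabla_{GM})$ then follow via the Cartier correspondence.

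Finally, the direct summand $E':=R^if_*\cO_X=R^if_*\Omega^0_{X/Y}$ of the graded Hodge bundle $E$ is locally split and lies in $\ker\theta$ because $\Omega^{-1}_{X/Y}=0$, so Corollary \ref{Bru-p} implies that $(R^if_*\cO_X)^\vee$ is $W_2$-nef on $Y$. Setting $i=d$ and applying relative Serre duality $Rf_*\omega_{X/Y}[d]\simeq (Rf_*\cO_X)^\vee$ identifies $(R^df_*\cO_X)^\vee$ with $f_*\omega_{X/Y}$, which is therefore $W_2$-nef and locally free. The main obstacle is the second step: slope semistability of $(E,\theta)$ in positive characteristic is not immediate from its Hodge-theoretic origin and must be extracted from the closedness of the Higgs--de Rham flow, while the rank bound $\rk E\le p$ implicit in the definition of $\HIG{0}$ is delicate in this generality and presumably requires a careful reduction to sub-objects of bounded rank containing the summand $R^if_*\cO_X$.
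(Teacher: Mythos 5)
Your overall architecture is the same as the paper's: identify $(R^if_*^{dR}\cO_X,\nabla_{GM})$ with the inverse Cartier transform of the Hodge graded $(E,\theta)=(\bigoplus R^{i-p}f_*\Omega^p_{X/Y},\mathrm{KS})$ via Katz/Ogus--Vologodsky, deduce semistability from periodicity of the resulting Higgs--de Rham flow, and feed a locally split piece of $\ker\theta$ into Corollary \ref{Bru-p}. Your endgame differs harmlessly: the paper in effect applies Corollary \ref{Bru-p} to the dual system of Hodge sheaves to conclude directly that the deepest Hodge piece $f_*\omega_{X/Y}$ is $W_2$-nef, while you apply it to $R^df_*\cO_X\subset\ker\theta$ and then invoke relative Serre duality $(R^df_*\cO_X)^\vee\simeq f_*\omega_{X/Y}$; both routes are fine (yours needs local freeness of all $R^jf_*\cO_X$, which the Deligne--Illusie degeneration you already invoked supplies).

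There is, however, one step that does not work as written. You derive $\Delta(E)H^{n-2}=0$ from ``the boundedness argument from the proof of Lemma \ref{stable-Higgs-de-Rham-sequence}'', but that argument is run under the standing hypothesis $\Delta(E)H^{n-2}=0$ of Theorem \ref{log-freeness}: it is precisely this vanishing (together with semistability) that lets one invoke Theorem \ref{boundedness} to bound the family $\{G_m\}$, and only then does finiteness of $\{p^{nm}\Delta_n(E)\}$ give $\Delta_n(E)=0$. Using it to produce the discriminant vanishing is therefore circular. The repair is immediate once you state the periodicity correctly: the canonical isomorphism identifies $\Gr_F C^{-1}_{\tilde Y}(E,\theta)$ with $(E,\theta)$ itself (under the standard identification of $Y$ with its Frobenius twist), not with a relative Frobenius pullback of it; since $C^{-1}$ multiplies $c_i$ by $p^i$ in $H^{2i}_{\et}(Y,\QQ_l)$ and passing to the graded preserves Chern classes, periodicity forces $c_i(E)=p^i c_i(E)$ and hence $c_i(E)=0$ for all $i\ge 1$ --- no boundedness, no self-duality, and no separate argument for $c_1$ are needed. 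Finally, your closing worry about the rank bound $r\le p$ built into $\HIG{0}$ is well taken: the rank of $R^if_*^{dR}\cO_X$ is a Betti number of the fibres and is not controlled by $d<p$; the paper's proof quietly ignores this point as well (what is really used is that the Kodaira--Spencer field is nilpotent of level $\le d<p$), so this is a caveat on the statement rather than a defect specific to your argument.
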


\begin{proof} 
  By \cite[Theorem 4.17]{OV} we have a canonical isomorphism
$$C^{-1}_{\tilde Y} (\Gr _FR^if_*^{dR}\cO_X, \kappa) \simeq (R^if_*^{dR}\cO_X, \nabla _{GM}),$$
where $F^{\bullet}$ is the Hodge filtration and $\kappa$ is the
associated graded (i.e., the cup-product with the Kodaira-Spencer
mapping). If $Y$ is projective then the above isomorphism implies that
both $(R^if_*^{dR}\cO_X, \nabla _{GM})$ and $(\Gr _FR^if_*^{dR}\cO_X,
\kappa)$ are semistable as we have a periodic Higgs-de Rham sequence
of $(\Gr _FR^if_*^{dR}\cO_X, \kappa)$ (here we use \cite[Proposition
1]{La2}). So Corollary \ref{semipositivity-cor} implies that the first
non-zero piece of the Hodge filtration of $R^if_*^{dR}\cO_X$, i.e.,
$R^{i-d}f_*\omega_{X/Y}$, is a $W_2$-nef locally free sheaf on $Y$.
\end{proof}

\begin{Remark}
  In the complex case the above corollary is precisely the result of
  Griffiths (see \cite[Corollary 7.8]{Gr}), who showed that if $f:
  X\to Y$ is a smooth morphism of smooth projective varieties, then
  the direct image $f_*\omega_{X/Y}$ of the relative canonical bundle
  is locally free and nef.
\end{Remark}

\begin{Corollary}\label{canonical-Katz}
Let $D$ be
  a divisor on $X$ which is a union of divisors, each of which is
  smooth over $Y$, and which have normal crossings relative to
  $Y$. Let us assume that $f$ us smooth, $d<p$ and there exists a lifting $\tilde f: \tilde
  X\to \tilde Y$ of $f$ to $W_2(k)$ and a compatible lifting $\tilde
  D$ of $D$. Then $ (R^if_*\Omega_{X/Y} ^{\bullet}(\log \, D), \nabla
  _{GM})$ is semistable with vanishing Chern classes.  In particular,
  $R^jf_*\omega_{X/Y}(D)$ is a $W_2$-nef locally free sheaf on $Y$.
\end{Corollary}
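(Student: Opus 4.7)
The proof parallels Corollary \ref{canonical-smooth} with $\Omega^\bullet_{X/Y}$ replaced by $\Omega^\bullet_{X/Y}(\log D)$. The key input is a logarithmic analog of the isomorphism
\[
C^{-1}_{\tilde Y}\bigl(\Gr _F R^if_*\Omega^\bullet_{X/Y}(\log\,D),\, \kappa\bigr) \;\simeq\; \bigl(R^if_*\Omega^\bullet_{X/Y}(\log\,D),\, \nabla_{GM}\bigr),
\]
where $F^\bullet$ is the Hodge filtration and $\kappa$ is the logarithmic Kodaira--Spencer Higgs field. I would obtain this by combining the logarithmic Ogus--Vologodsky correspondence (Theorem \ref{log-smooth-OV-correspondence}) with a relative version of \cite[Theorem 4.17]{OV}, using the assumed compatible liftings $\tilde f$ and $\tilde D$; since $D$ is smooth over $Y$, no log structure is needed on $Y$. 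The required $E_1$-degeneration of the log Hodge-to-de Rham spectral sequence, together with local freeness of the Hodge graded pieces, follows from smoothness of $f$ and the $W_2$-lifting hypothesis. Iterating the isomorphism produces a periodic Higgs--de Rham sequence, whence \cite[Proposition 1]{La2} gives slope semistability of both sides; combined with Theorem \ref{log-freeness} this yields vanishing Chern classes, proving the main assertion.

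For the \emph{in particular} clause, I would apply the same machinery to the twisted complex $\Omega^\bullet_{X/Y}(\log\,D)(-D)$, obtaining semistability with vanishing Chern classes for $(R^if_*\Omega^\bullet_{X/Y}(\log\,D)(-D), \nabla_{GM})$. In the associated graded, the summand $F^0/F^1 = R^df_*\cO_X(-D)$ is a locally split subsheaf that sits in the kernel of the Kodaira--Spencer Higgs field (there is nothing of lower Hodge index to which it can map). Corollary \ref{semipositivity-cor} then yields that $(R^df_*\cO_X(-D))^\vee$ is $W_2$-nef, and relative Serre duality identifies this dual canonically with $f_*\omega_{X/Y}(D)$, giving the stated conclusion.

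The principal obstacle is establishing the logarithmic inverse Cartier isomorphism at the level of relative log de Rham cohomology and verifying its compatibility with iteration. The functoriality of the log inverse Cartier transform proven in the appendix of the paper supplies the core technical ingredient; the remaining work is to propagate this through the direct image and to confirm that the iterated Higgs--de Rham sequence stays within the locally free semistable category, via a boundedness argument in the spirit of Lemma \ref{stable-Higgs-de-Rham-sequence}. A secondary subtlety is that one must run the argument twice, once for $\Omega^\bullet(\log\,D)$ and once for $\Omega^\bullet(\log\,D)(-D)$, since the first gives the semistability statement while the second is what delivers the $W_2$-nefness of $f_*\omega_{X/Y}(D)$ via Serre duality.
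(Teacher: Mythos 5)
Your handling of the main assertion is essentially the paper's: both arguments hinge on recasting Katz's theorem as a Cartier--transform isomorphism $C^{-1}_{\tilde Y}\bigl(\Gr _F R^if_*\Omega^{\bullet}_{X/Y}(\log\, D),\kappa\bigr)\simeq \bigl(R^if_*\Omega^{\bullet}_{X/Y}(\log\, D),\nabla_{GM}\bigr)$ (with no log structure on $Y$), which makes the Higgs--de Rham sequence of the associated graded periodic, whence semistability via \cite[Proposition 1]{La2}; note that the vanishing of the Chern classes comes from periodicity itself ($c_1(E)=p^mc_1(E)$ and $\Delta_i(E)=p^{im}\Delta_i(E)$ force $c_1(E)=0$ and $\Delta_i(E)=0$, then Lemma \ref{equivalence-Delta}), rather than from Theorem \ref{log-freeness} alone, which only controls the higher Chern classes in terms of $c_1$. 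Where you genuinely diverge is the ``in particular'' clause. The paper applies Corollary \ref{semipositivity-cor} to the \emph{dual} Higgs bundle: $(f_*\omega_{X/Y}(D))^{*}=(\Gr^d_F)^{*}$ is a locally split direct summand of $\bigl(\Gr_F R^df_*\Omega^{\bullet}_{X/Y}(\log\, D)\bigr)^{*}$ annihilated by the transposed Kodaira--Spencer field, so its dual $f_*\omega_{X/Y}(D)$ is $W_2$-nef in one step. You instead rerun the entire machine on the twisted complex $\Omega^{\bullet}_{X/Y}(\log\, D)(-D)$ and invoke relative Serre duality on $R^df_*\cO_X(-D)$. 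That route can be made to work, but it costs an extra input the paper never needs: a Cartier isomorphism (equivalently, Deligne--Illusie type degeneration with locally free Hodge graded pieces) for the \emph{twisted} log de Rham complex, which is not covered by the citation of Katz and must be justified separately --- most economically by noting that $R^df_*\Omega^{\bullet}_{X/Y}(\log\, D)(-D)$ is Poincar\'e dual to $R^df_*\Omega^{\bullet}_{X/Y}(\log\, D)$ as a filtered module with connection, at which point your second pass collapses into the paper's one-pass dualization. So the proposal is correct in outline, but you should either supply the twisted Cartier isomorphism explicitly or, better, replace the second pass by the direct observation about the dual Higgs bundle.
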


\begin{proof} 
  The proof is the same as that of Corollary \ref{canonical-smooth}
  except that we need to reformulate Katz's \cite[Theorem 3.2]{Katz}
  using the inverse Cartier transform (cf. \cite[Example 3.17 and
  Remark 3.19]{OV}). In this way we get a canonical isomorphism
$$C^{-1}_{\tilde Y} (\Gr _FR^if_*\Omega_{X/Y} ^{\bullet}(\log \, D), \kappa) 
\simeq (R^if_*\Omega_{X/Y} ^{\bullet}(\log \, D), \nabla _{GM}).$$
\end{proof}

\medskip

One can also get similar theorems as above in the case of ``unipotent local
monodromies'', e.g., for semistable reductions. 
Before stating the corresponding result let us recall the definition of a semi-stable reduction (see \cite[Definition 1.1]{Il}).
Let $S$ be a scheme and let $X$ and $Y$ be smooth $S$-schemes, $f: X\to Y$ an $S$-morphism and $B\subset Y$ a normal crossing divisor relative to $S$, $D:=X\times _Y B$.  
We say that that $f: X\to Y$ is \emph{semi-stable} (or  $f$ has \emph{a semi-stable reduction along $B$})
if  locally in the \'etale topology on $X$, $f$ is a product of $S$-morphisms of the following type: 
\begin{enumerate}
	\item the projection $\pi _1: \AA ^n_S\to \AA^1_S$, $B=0$, 
	\item $h: \AA^n_S =\Spec \cO_S [x_1,...,x_n] \to \AA^1_S= \Spec \cO_S [y]$, $h^*y=x_1...x_n$, $B=V(y)$.
\end{enumerate}

\begin{Corollary}\label{canonical-Illusie}
  Let $B$ be a normal crossing divisor on $Y$ and assume that $f$ has a semi-stable
  reduction along $B$.  Let us set $D=f^{-1}(B)$.  Assume that
  there exists a lifting $\tilde f: (\tilde X, \tilde D)\to (\tilde
  Y, \tilde B)$ of $f$ to $W_2(k)$ with $\tilde f$ a semi-stable
  reduction along $\tilde B$. Assume that $p>d+\dim Y$.
  Then $$(R^if_*\Omega_{X/Y} ^{\bullet}(\log \, D/B), \nabla
  _{GM})$$ is a semistable locally free $\cO_Y$-module with an
  integrable logarithmic connection on $(Y,B)$. In particular,
  $R^jf_*\omega_{X/Y}(D)$ is a $W_2$-nef locally free sheaf on $(Y, B)$.
\end{Corollary}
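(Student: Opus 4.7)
The plan is to mimic the proofs of Corollaries \ref{canonical-smooth} and \ref{canonical-Katz}, substituting Katz's comparison by its semistable logarithmic analogue due to Illusie. Concretely, the first step is to produce a canonical isomorphism
\[
C^{-1}_{(\tilde Y, \tilde B)}\bigl(\Gr_F R^i f_*\Omega^\bullet_{X/Y}(\log \, D/B), \kappa\bigr)\simeq \bigl(R^i f_*\Omega^\bullet_{X/Y}(\log \, D/B), \nabla_{GM}\bigr),
\]
where $\kappa$ is the Kodaira--Spencer map of the Hodge filtration and $\nabla_{GM}$ is the Gauss--Manin connection, now viewed as a logarithmic connection on $(Y,B)$. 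Since $f$ has semistable reduction along $B$, the residues of $\nabla_{GM}$ along each component of $B$ are unipotent, hence nilpotent; both the nilpotence order of these residues and the nilpotence level of $\kappa$ are bounded by $d+1$, and the ranks of the Hodge graded pieces are controlled by log Betti numbers of the fibers. The hypothesis $p>d+\dim Y$ is arranged precisely so that all these numerical constraints sit within the range required by Theorem \ref{log-smooth-OV-correspondence}.

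Given the isomorphism, set $(E_0,\theta_0):=(\Gr_F R^if_*\Omega^\bullet_{X/Y}(\log \, D/B),\kappa)$. Applying $C^{-1}_{(\tilde Y, \tilde B)}$ returns $(V_0,\nabla_0)=(R^if_*\Omega^\bullet, \nabla_{GM})$, and the graded of Simpson's filtration on $(V_0,\nabla_0)$ yields back $(E_0,\theta_0)$ (up to a Frobenius twist). The canonical Higgs--de Rham sequence of $(E_0,\theta_0)$ is thus periodic of period one, so by Theorem \ref{boundedness} any destabilizing sub-Higgs sheaf would iterate to an unbounded family; hence $(E_0,\theta_0)$ is slope $H$-semistable for every ample $H$ on $Y$. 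Periodicity also forces $\Delta(E_0)H^{n-2}=0$ and $c_1(E_0)=0$ (these numerical invariants would otherwise grow under Frobenius pullback), so Theorem \ref{strong-log-freeness} places $(E_0,\theta_0)$ in $\HIG{0}(Y,B)$, and through the comparison isomorphism $(R^if_*\Omega^\bullet_{X/Y}(\log\, D/B),\nabla_{GM})$ is a locally free, slope semistable MIC with vanishing Chern classes on $(Y,B)$. This yields the first assertion.

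For the $W_2$-nef statement, take $i=d$, so that $F^d=\Gr^d_F=f_*\omega_{X/Y}(D)$ is the top step of the Hodge filtration. Pass to the dual Higgs bundle $(E_0^\ast,-\theta_0^\ast)$, which is again an object of $\HIG{0}(Y,B)$ (duals preserve rank, semistability and all the vanishing conditions). The dual Higgs field raises the Hodge degree, so the summand $(\Gr^d_F)^\ast=(f_*\omega_{X/Y}(D))^\ast$ is a locally split subsheaf of $E_0^\ast$ that lies in $\ker(-\theta_0^\ast)$. Applying Corollary \ref{Bru-p} to $(E_0^\ast,-\theta_0^\ast)$ gives that $((f_*\omega_{X/Y}(D))^\ast)^\ast=f_*\omega_{X/Y}(D)$ is $W_2$-nef on $(Y,B)$, as required.

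The main obstacle is the first step: formulating and verifying the logarithmic Cartier comparison for $(R^if_*\Omega^\bullet_{X/Y}(\log \, D/B),\nabla_{GM})$ in the semistable reduction setting, and confirming that the resulting residues and fiber-wise ranks meet the level $\leq p-1$ and order $\leq p$ constraints of Theorem \ref{log-smooth-OV-correspondence}. This is exactly the point at which Illusie's logarithmic extension of Katz's theorem is needed, and where the numerical hypothesis $p>d+\dim Y$ is used in an essential way.
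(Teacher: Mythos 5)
Your proposal is correct and follows essentially the same route as the paper: invoke Illusie's semistable-reduction analogue of Katz's theorem to obtain the isomorphism $C^{-1}_{(\tilde Y,\tilde B)}(\Gr_F R^if_*\Omega^\bullet_{X/Y}(\log\, D/B),\kappa)\simeq(R^if_*\Omega^\bullet_{X/Y}(\log\, D/B),\nabla_{GM})$, deduce semistability from the resulting period-one Higgs--de Rham flow via boundedness, and then apply Corollary \ref{Bru-p}. The only cosmetic deviation is that you apply Corollary \ref{Bru-p} to the top graded piece of the dual Higgs bundle, whereas the paper (in the model proof of Corollary \ref{canonical-smooth}) applies it to the bottom Hodge graded piece, which lies in $\ker\kappa$ and whose dual is the direct image of the relative (log) canonical sheaf; these are equivalent.
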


\begin{proof}
  Again the proof is the same as that of Corollary
  \ref{canonical-smooth}, except that now one needs to use
  \cite[Theorem 4.7]{Il} and check that the corresponding result describes an isomorphism
$$C^{-1}_{(\tilde Y, \tilde B)} (\Gr _FR^if_*\Omega_{X/Y} ^{\bullet}(\log \, D/B), \kappa) 
\simeq (R^if_*\Omega_{X/Y} ^{\bullet}(\log \, D/B), \nabla
_{GM}).$$ Assumptions of this theorem are satisfied due to
\cite[Corollary 2.4]{Il} and our assumption $p>d+\dim Y$.  We leave
checking cumbersome details to the interested reader.
\end{proof}

In characteristic zero, the above result is almost the same as
\cite[Theorem 5]{Ka}.

\medskip

\begin{Remark}
  One can also combine Corollaries \ref{canonical-Katz} and
  \ref{canonical-Illusie} using \cite[4.22]{Il}). It is also
  possible to further generalize these results and deal with
  push-forwards of Fontaine modules as in \cite[Theorem 4.17]{OV}
and the corresponding log versions.
\end{Remark}

\section{Appendix: functoriality of the inverse Cartier transform}

In this appendix we prove the functoriality of the inverse Cartier
transform. In the non-logarithmic case functoriality follows from
\cite[Theorem 3.22]{OV}. Unfortunately, although it seems very likely
that an analogue of this result holds in the logarithmic case, this
part of their paper was never generalized.

In the following instead of dealing with a general theory that would
demand a lot of space and additional notation, we deal only with the
simple cases used in the paper. Instead of using the general framework
of \cite{Sc} that follows \cite{OV}, we use an explicit description of
the Ogus--Vologodsky correspondence provided in \cite{LSZ2} and
\cite[Appendix]{LSYZ}.

	Let $k$ be an algebraically closed field of positive characteristic
and let $f: (Y, B)\to (X,D)$ be a $k$-morphism of smooth log pairs over $k$.

\begin{Definition} \label{good-lifting} 
  We say
  that $f$ has a \emph{good lifting to $W_2(k)$} if $f$ lifts to a
  morphism of smooth log pairs $\tilde f: (\tilde Y, \tilde B)\to
  (\tilde X, \tilde D)$ over $W_2(k)$ such that locally in the \'etale topology on $\tilde X$, $\tilde f$ admits compatible liftings of the Frobenius morphisms, 
  i.e., we can cover $\tilde X$ with images of \'etale $W_2(k)$-morphisms $\tilde U\to \tilde X$ and
  $\tilde Y$ with images of \'etale $W_2(k)$-morphisms $\tilde V\to \tilde Y$ so that
\begin{enumerate}
\item there exists  $\tilde F_{U}: \tilde U\to \tilde U$ lifting the
  Frobenius morphism $F_U$, where $U=\tilde U\otimes _{W_2(k)} k$, so that $\tilde F_{U}^{-1}(\tilde D)=p \tilde D,$
\item there exists  $\tilde F_{V}: \tilde V\to \tilde V$ lifting the
  Frobenius morphism $F_V$, where $V=\tilde V\otimes _{W_2(k)} k$,  so that $\tilde F_{U}^{-1}(\tilde B)=p \tilde B,$
\item there exists $\tilde f _V: \tilde V \to \tilde U$ lifting $\tilde f$ such  that  the diagram
$$\xymatrix{
\tilde V\ar[r]^{\tilde f _V}\ar[d]^{\tilde F_V}&\tilde U\ar[d]^{\tilde F_U}\\
\tilde V\ar[r]^{\tilde f _V}&\tilde U\\
}$$
is commutative.
\end{enumerate}
In this case we say that $\tilde f$ is a \emph{good lifting of $f$ to $W_2(k)$}. 
\end{Definition}

Clearly, if $\tilde f$ is an open embedding then it is a good lifting. Similarly, a composition of good liftings is a good lifting. It is also easy to see that the standard Frobenius morphism given by raising elements to their $p$-th power gives the following proposition:

\begin{Proposition}  
Assume $f$ lifts to a morphism of smooth log pairs $\tilde f: (\tilde Y, \tilde B)\to
  (\tilde X, \tilde D)$ over $S=\Spec W_2(k)$ such that locally in the \'etale topology on 
  $\tilde X$, $\tilde f$ is a composition of products of $S$-morphims of the following type:
  \begin{enumerate}
  	\item the projection $\pi _1: \AA ^n_S\to \AA^1_S$, $\tilde B=0$,  $\tilde D=0$, 
  	\item the embedding $i _1: \AA ^1_S\to \AA^n_S$, $\tilde B=0$,  $\tilde D=0$, 
  	\item $h:  \AA^m_S =\Spec \cO_S [y_1,...,y_m] \to \AA^n_S =\Spec \cO_S [x_1,...,x_n]$, 
  $\tilde B= V(\prod_{i=1}^{m} y_i )$, $\tilde D=V(\prod_{j=1}^{n} x_j)$ and 
  for $j=1,...n$ we have
  $$h^*(x_j)=\prod_{i=1}^{m} y_i^{a_{ij}},$$
where $a_{ij}$ are some non-negative integers. 
\end{enumerate}
Then $\tilde f$ is a {good lifting of $f$ to $W_2(k)$}. 
\end{Proposition}

\begin{Remark} 
  It is easy to see that any log-smooth lifting $\tilde f$ of $f$ to
  $W_2(k)$ is a good lifting. 
  One can also see that almost every reduction of a morphism of
  smooth log pairs from characteristic zero to positive characteristic
  gives rise to a good lifting. For example, in the case $\tilde B=0$ and $\tilde D=0$ one can decompose any morphism
 of smooth schemes over an algebraically closed field into a composition of a closed embedding  
 and a smooth morphism. A smooth closed subvariety of a smooth variety is locally in the \'etale topology a product of maps 
 of type 2 and a smooth morphism is locally in the \'etale topology a product of maps of type 1.
\end{Remark}

\medskip

Let $\Hig _{\le p-1}^{\rm lf}(X,D)$ be the full subcategory of $\Hig
(X,D)$ consisting of locally free logarithmic Higgs sheaves with
nilpotent Higgs field of level less or equal to $p-1$.  Let $\Mic
_{\le p-1}(X,D)$ be the full subcategory of $\Mic (X,D)$ consisting of
$\cO_X$-modules with an integrable logarithmic connection whose
logarithmic $p$-curvature is nilpotent of level less or equal to $p-1$
and the residues are nilpotent of order less than or equal to $p$.

\begin{Theorem} \label{functoriality-Cartier}\footnote{After sending the preprint, the author was informed 
by K. Zuo that together with R. Sun and J. Yang 
they checked compatibility of the inverse Cartier transform for double covers of $\PP^1$.}
  Let $f: (Y, B)\to (X,D)$ be a morphism of smooth log pairs that has
  a good lifting $\tilde f: (\tilde Y, \tilde B)\to (\tilde X, \tilde
  D)$ to $W_2(k)$. Then we have an isomorphism of functors
$$f^*\circ C^{-1}_{ (\tilde X, \tilde D)} \simeq C^{-1}_{ (\tilde Y, \tilde B)}  \circ f^* : {\Hig}_{\le p-1}^{\rm lf}(X,D)
\to {\Mic} _{\le p-1}(Y,B).$$
\end{Theorem}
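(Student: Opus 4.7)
The plan is to imitate the local strategy already used in the proof of Lemma \ref{compatibility}. According to the explicit construction of the inverse Cartier transform in \cite{LSZ2} and \cite[Appendix]{LSYZ}, given a local logarithmic Frobenius lifting $\tilde F_{\tilde U}:\tilde U\to\tilde U$ of a chart $\tilde U$ of $\tilde X$, one sets
$$C^{-1}_{(\tilde X,\tilde D)}(E,\theta)|_U \ =\ (F^*E|_U,\ \nabla_{\rm can}+(\id\otimes\zeta_U)\circ F^*\theta),$$
where $\zeta_U=d\tilde F_{\tilde U}/p$, and one then glues these local pieces on overlaps using explicit transition automorphisms built from the differences of Frobenius lifts. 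It therefore suffices to (i) construct a canonical isomorphism between the two functors locally, once we have compatible Frobenius lifts on $\tilde X$ and $\tilde Y$, and (ii) verify that this isomorphism is compatible with the gluing automorphisms on both sides.

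For (i), I would use the good lifting of $\tilde f$ to cover $\tilde X$ and $\tilde Y$ by charts with \'etale logarithmic coordinates $(x_i)$ and $(y_j)$ satisfying $\tilde f^*(x_i)=\prod_j y_j^{a_{ij}}$ on each pair of charts. Choosing the standard logarithmic Frobenius lifts $\tilde F_{\tilde U}(x_i)=x_i^p$ and $\tilde F_{\tilde V}(y_j)=y_j^p$ forces $\tilde F_{\tilde U}\circ\tilde f=\tilde f\circ\tilde F_{\tilde V}$, exactly as computed in the proof of Lemma \ref{compatibility}. Consequently $f\circ F_Y=F_X\circ f$, and the underlying $\cO_Y$-modules of both $f^*C^{-1}_{(\tilde X,\tilde D)}(E,\theta)$ and $C^{-1}_{(\tilde Y,\tilde B)}(f^*E,f^*\theta)$ are canonically identified with $F_Y^*f^*E=f^*F_X^*E$. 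To compare the two connections one checks the identity
$$df\circ F_Y^*\zeta_U \ =\ \zeta_V\circ F_Y^*(df)\colon F_Y^*f^*\Omega^1_{\tilde X}(\log\tilde D)\to\Omega^1_{\tilde Y}(\log\tilde B),$$
which is the mod-$p$ reduction of the equality $d(\tilde F_{\tilde U}\circ\tilde f)/p=d(\tilde f\circ\tilde F_{\tilde V})/p$ forced by the compatibility of the Frobenius lifts. Combined with $f^*\theta=(\id\otimes df)\circ f^*(\theta)$, this gives equality of the two connection formulae, hence the desired local isomorphism.

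The main obstacle is (ii), the globalisation. For two log-Frobenius lifts $\tilde F_{\tilde U},\tilde F'_{\tilde U}$ of the same chart, the transition automorphism of $C^{-1}$ is given by an explicit exponential in the operator built from $h=(\tilde F^{\prime*}_{\tilde U}-\tilde F^*_{\tilde U})/p$ together with $\theta$; nilpotence of level $\le p-1$ ensures that this exponential is a well-defined finite sum, and likewise on $\tilde Y$ via the corresponding datum $h'$. Reducing the gluing check to the identity $\tilde f^*h=h'\circ d\tilde f$, which is obtained by differentiating $\tilde F_{\tilde U}\tilde f=\tilde f\tilde F_{\tilde V}$ (and its primed analogue) and dividing by $p$, one sees that $f^*$ intertwines the cocycle defining $C^{-1}_{(\tilde X,\tilde D)}$ with the one defining $C^{-1}_{(\tilde Y,\tilde B)}$. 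The real care lies in tracking the log terms: one must verify that the exponential formulas continue to behave naturally in the presence of the boundary divisors and that residues are transported correctly, which ultimately reduces to the same local identity because $d\tilde f$ already respects the log structures by the good lifting assumption. Since independence of the Frobenius lift and the cocycle condition are already built into the construction of $C^{-1}$, no further analytic input beyond this compatibility of cocycles is needed, and the natural isomorphism of functors follows.
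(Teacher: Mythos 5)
Your proposal follows essentially the same route as the paper's proof: choose standard logarithmic Frobenius lifts $x_i\mapsto x_i^p$, $y_j\mapsto y_j^p$ in the good-lifting coordinates (which are automatically compatible with $\tilde f$), verify the local identity relating $\zeta$ on $X$ and $Y$ to get the isomorphism chart by chart, and then check that $f^*$ intertwines the exponential gluing cocycles built from the differences of Frobenius lifts divided by $p$. Apart from a harmless notational slip (writing $F_Y^*\zeta_U$ where $f^*\zeta_U$ is meant, under the identification $F_Y^*f^*=f^*F_X^*$), this is the paper's three-step argument.
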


\begin{proof}
 
\emph{Step 1.} Let us first assume that there exist global compatible logarithmic liftings of
  the Frobenius morphism on $X$ and $Y$, i.e., 
\begin{enumerate}
\item there exists  $\tilde F_{X}: \tilde X\to \tilde X$ lifting the
  Frobenius morphism $F_X$ so that
$$\tilde F_{X}^* \cO_{\tilde X}(-\tilde D)=\cO_{\tilde X}(-p \tilde D),$$
\item there exists  $\tilde F_{Y}: \tilde Y\to \tilde Y$ lifting the
  Frobenius morphism $F_Y$ so that
$$\tilde F_{Y}^* \cO_{\tilde Y}(-\tilde B)=\cO_{\tilde Y}(-p \tilde B),$$
\item the diagram
$$\xymatrix{
\tilde Y\ar[r]^{\tilde f}\ar[d]^{\tilde F_Y}&\tilde X\ar[d]^{\tilde F_X}\\
\tilde Y\ar[r]^{\tilde f}&\tilde X\\
}$$
is commutative.
\end{enumerate}

The first condition implies that there exists a uniquely defined $\zeta _X$ such that the diagram 
$$ \xymatrix{
  \tilde F_X^*\Omega_{\tilde X}^1(\log \, \tilde D)\ar@{>>}[d]
  \ar[r]^{d \tilde F_{X}} &
  \Omega_{\tilde X}^1(\log \, \tilde D)\\
  F_X^* \Omega_{X}^1(\log \, D)\ar[r]^{\zeta_X}&
  \Omega_{X}^1(\log \, D)\ar[u]_{p}\\
}$$ is commutative. The second condition gives $\zeta_Y$ with a
similar diagram for $(\tilde Y, \tilde B)$. The third condition shows
that we have a commutative diagram
$$ \xymatrix{
 \tilde F_Y^*\tilde f^*\Omega_{\tilde X}^1(\log \, \tilde   D)\ar[dr]_{\tilde F_Y^*(d\tilde f)} \ar@{=}[r]& \tilde f^*\tilde F_X^*\Omega_{\tilde X}^1(\log \, \tilde
    D) \ar[rr]^{\tilde f^*(d \tilde  F_{X})} && \tilde f^*\Omega_{\tilde X}^1(\log \, \tilde D)\ar[d]_{d\tilde f}\\
& \tilde F_Y^* \Omega_{\tilde Y}^1(\log \, \tilde B)\ar[rr]^{d\tilde F_Y}&&
  \Omega_{\tilde Y}^1(\log \, \tilde B).\\
}$$
Together with the previous two diagrams this shows that the  diagram
$$ \xymatrix{
  F_Y^*f^*\Omega_{X}^1(\log \, D)\ar[dr]_{F_Y^*(df)} \ar@{=}[r]&
  f^*F_X^*\Omega_{ X}^1(\log \,
  D) \ar[rr]^{f^*(\zeta_{X})} && f^*\Omega_{X}^1(\log \, D)\ar[d]_{d f}\\
  & F_Y^* \Omega_{Y}^1(\log \, B)\ar[rr]^{\zeta_Y}&&
  \Omega_{ Y}^1(\log \, B)\\
}$$ 
is also commutative.
Now let $(E, \theta)$ be an object of $\Hig _{\le p-1}^{\rm lf}(X,D)$
and let us write $f^*(E, \theta)=(f^*E, \theta _Y)$. Then we set
$C^{-1}_{(\tilde X, \tilde D)}(E,\theta)=(F_X^*E, \nabla)$, where
$$\nabla := \nabla_{can}+({\id} _{F_X^*E}\otimes \zeta_{X})\circ (F^*_{X}\theta)$$
and $ \nabla_{can}$ is the canonical connection on $F_X^*E$ appearing
in Cartier's descent theorem (i.e., $ \nabla_{can}$ is the differentiation
along the fibers of the Frobenius morphism).  Similarly, we can define
$C^{-1}_{ (\tilde Y, \tilde B)}$.  Since $f^*(F_X^*E, \nabla
_{can})=(F_Y^*(f^*E), \nabla _{can})$, the above diagram shows that
$$f^*C^{-1}_{(\tilde X, \tilde D)}(E,\theta)=f^*(F_X^*E, \nabla)=(F_Y^*f^*E, \nabla_{can}+
({\id} _{F_Y^*f^*E}\otimes \zeta_{Y})\circ (F^*_{Y}\theta _Y))=C^{-1}_{ (\tilde Y, \tilde B)} f^*
(E,\theta).$$

\medskip

\emph{Step 2.}
Now let us assume that we have two pairs 
$(\tilde F_{X}^1, \tilde F_{Y}^1)$ and $(\tilde F_{X}^2, \tilde F_{Y}^2)$
of compatible global logarithmic liftings 
of the Frobenius morphism on $X$ and $Y$.
There exist an $\cO_{X}$-linear map $h_{12}^X$
such that the following diagram is commutative
$$\xymatrix{
  \cO_{\tilde X}\ar[r]^{(\tilde F_X^2)^*- (\tilde F_X^1)^*}\ar@{>>}[d]& p\tilde F_*\cO_{\tilde X}\\
  \cO_X\ar[d]^d&\\
  \Omega^1_{X}(\log \, D)\ar[r]^{h_{12}^X}&F_*\cO_X\ar[uu]_{\simeq}^{p}\\
}$$ By abuse of notation we let $h_{12}^X: F^*\Omega _{X}(\log \, D)
\to \cO_{X}$ be adjoint to $h_{12}^X$.  Similarly, one can define
$h_{12}^Y:F^*\Omega _{Y}(\log \, B) \to \cO_{Y}$. It is
straightforward to check that we have a commutative diagram:
$$ \xymatrix{
  F_Y^*f^*\Omega_{X}^1(\log \, D)\ar[dr]_{F_Y^*(df)} \ar@{=}[r]&
  f^*F_X^*\Omega_{ X}^1(\log \,
  D) \ar[rr]^{f^*(h_{12}^X)} && f^*\cO_X\ar@{=}[d]\\
  & F_Y^* \Omega_{Y}^1(\log \, B)\ar[rr]^{h_{12}^Y}&&
  \cO_{ Y}.\\
}$$ 
Now let us define a map 
$$\tau_{12}^X: F^*E\stackrel{F^*\theta }{\longrightarrow} F^*E\otimes 
F^* \left(\Omega_{X} (\log \, D)\right) \stackrel{\id\otimes h_{12}^X}{\longrightarrow} 
F^*E.$$
Similarly we define $\tau_{12}^Y: F^*(f^*E)\to F^*(f^*E)$. The above diagram shows that $\tau_{12}^Y=f^*\tau_{12}^X$.

\medskip
\emph{Setp 3.}
Now we consider the general situation.  Let $(E, \theta)$ be an object
of $\Hig _{\le p-1}^{\rm lf}(X,D)$. By assumption there exist \'etale coverings
$\{\tilde U_{\alpha}\} _{\alpha\in I}$ of $\tilde X$ and $\{\tilde
V_{\alpha}\} _{\alpha\in I}$ of $\tilde Y$ such that we have
compatible logarithmic liftings $(\tilde F_{X,\alpha}, \tilde
F_{Y,\alpha})$ of the Frobenius morphisms $F_{X,\alpha}: U_{\alpha}\to
U_{\alpha}$ and $F_{Y,\alpha}: V_{\alpha}\to V_{\alpha}$.

Let us recall the construction of $(M, \nabla)=C^{-1}_{(\tilde X,
  \tilde D)}(E,\theta)\in \Mic (X,D)$ after \cite{LSZ2} and
\cite[Appendix]{LSYZ}.  Over each $U_{\alpha}$ we define $(M_{\alpha},
\nabla_{\alpha})$ by using Step 1 and setting 
$$(M_{\alpha}, \nabla_{\alpha}) := C^{-1}_{(\tilde U_{\alpha},  \tilde D\cap \tilde U_{\alpha})}(E,\theta).$$
Over $U_{\alpha\beta}=U_{\alpha}\times_X  U_{\beta}$ we can use
two liftings $\tilde F_{X,\alpha}|_{U_{\alpha\beta}}$ and $\tilde
F_{X,\beta}|_{U_{\alpha\beta}}$ of the Frobenius morphism $F:
U_{\alpha\beta} \to U_{\alpha\beta}$ to define $\tau ^X_{\alpha\beta}:
F^*(E_{U_{\alpha\beta}})\to F^*(E_{U_{\alpha\beta}})$ as in Step 2. Then we
glue $(M_{\alpha}, \nabla_{\alpha})$ and $(M_{\beta}, \nabla_{\beta})$
over $U_{\alpha\beta}$ to a global object $(M, \nabla)\in \Mic (X,D)$
using
$$g_{\alpha \beta}^X:=\exp (\tau_{\alpha\beta}^X)=
\sum_{i=0}^{p-1}\frac{(\tau_{\alpha\beta}^X)^i}{i!}.$$
Here we use the fact that the category of quasi-coherent sheaves in the Zariski and \'etale toposes of $X$ are equivalent 
(and we can replace a connection by an appropriate $\cO_X$-linear map using Grothendieck's description of connections).
We can also define  $\zeta_{Y,\alpha}$, $\tau^Y_{\alpha\beta}$ and $g^Y_{\alpha\beta}$. 
We already know that
$$f^*(M_{\alpha}, \nabla_{\alpha})=f^*C^{-1}_{(\tilde U_{\alpha},  \tilde D\cap \tilde U_{\alpha})}(E_{U_{\alpha}},\theta|_{U_{\alpha}})
=C^{-1}_{(\tilde V_{\alpha},  \tilde B\cap \tilde V_{\alpha})} \left( f^* (E,\theta)|_{V_{\alpha}}\right) $$ and
$\tau_{\alpha\beta}^Y=f^*\tau_{\alpha\beta}^X$. In particular,
$g_{\alpha\beta}^Y=f^*g_{\alpha\beta}^X$ which shows that gluing maps
agree and 
$$f^*C^{-1}_{(\tilde X, \tilde D)}(E,\theta)=C^{-1}_{ (\tilde Y, \tilde B)} f^*
(E,\theta).$$
\end{proof}

\begin{Remark}
  The above isomorphism of functors holds more generally without
  restricting to locally free logarithmic Higgs sheaves.  We added
  this assumption only to ensure that  $\Tor _1(f^*E,
  F_*\cO_{B_i})=0$ for all irreducible components $B_i$ of $B$. This
  allows us to conclude that the image is in $\Mic _{\le p-1}(Y,B)$.
\end{Remark}

\section*{Acknowledgements}

The author was partially supported by Polish National Centre (NCN)
contract numbers 2013/08/A/ST1/00804, 2015/17/B/ST1/02634 and 2018/29/B/ST1/01232.
The author would like to thank the referee for very careful reading of the paper, 
pointing out some gaps and for the numerous remarks that allowed to improve  the paper.

\end{document}